\newtheorem{thm}{Theorem}[section]
\newtheorem{prop}[thm]{Proposition}
\newtheorem{lemma}[thm]{Lemma}
\newtheorem{keylemma}[thm]{Key Lemma}
\newtheorem{cor}[thm]{Corollary}
\theoremstyle{remark}
\newtheorem{example}[thm]{Example}
\newtheorem{remark}[thm]{Remark}
\newenvironment{varproof}[1][\proofname]{\par
  \pushQED{\qed}%
  \normalfont \topsep6\p@\@plus6\p@\relax
  \trivlist
  \item[\hskip\labelsep
        \itshape
    #1]\ignorespaces
}{%
  \popQED\endtrivlist\@endpefalse
}
\def\C{\mathbb{C}}
\def\R{\mathbb{R}}
\def\Z{\mathbb{Z}}
\def\P{\mathbb{P}}
\def\Sym{{\rm Sym}}
\def\Volume{{\rm Volume}}
\def\d{\partial}
\def\Sf{\mathfrak{S}}
\def\Sc{\mathcal{S}}
\def\Lc{\mathcal{L}}
\def\a{\alpha}
\def\b{\beta}
\def\g{\gamma}
\def\l{\lambda}
\def\G{\Gamma}
\def\eps{\varepsilon}
\def\emptyset{\varnothing}
\title{Schubert calculus and Gelfand--Zetlin polytopes}
\author{Valentina Kiritchenko}
\email{vkiritchenko@yahoo.ca}
\thanks{The authors were supported by RFBR grant 10-01-00540-a, AG Laboratory NRU-HSE,
MESRF grant, ag. 11.G34.31.0023, RF Innovation Agency grant 02.740.11.0608,
Simons Foundation (ES,VT), Dynasty Foundation (VK), Deligne fellowship (VT),
MESRF grants MK-2790.2011.1 (VT), 16.740.11.0307 (VK,ES), RFBR grants 11-01-00289-a (ES),
11-01-00654-a (VT), RFBR-CNRS grants 10-01-93110-a (VK), 10-01-93111-a (ES)}
\address{Laboratory of Algebraic Geometry and Faculty of Mathematics\\ Higher School of Economics\\
Vavilova St. 7, 112312 Moscow, Russia}
\address{Institute for Information Transmission Problems, Moscow, Russia}
\author{Evgeny Smirnov}
\email{evgeny.smirnov@gmail.com}
\address{Laboratory of Algebraic Geometry and
Faculty of Mathematics\\ Higher School of Economics\\
Vavilova St. 7, 112312 Moscow, Russia}
\address{Laboratoire J.-V.~Poncelet (UMI 2615 du CNRS) and Independent University of Moscow, Moscow, Russia}
\author{Vladlen Timorin}
\email{vtimorin@hse.ru}
\address{Laboratory of Algebraic Geometry and Faculty of Mathematics\\ Higher School of Economics\\
Vavilova St. 7, 112312 Moscow, Russia}
\address{Independent University of Moscow, Moscow, Russia}
\subjclass[2000]{14L30 (52B20, 14M15, 14N15)}
\date{}
\keywords{Flag variety, Schubert calculus, Gelfand--Zetlin polytope, volume polynomial}
\begin{document}

\begin{abstract}
  We describe a new approach to the Schubert calculus on complete flag varieties
using the volume polynomial associated with Gelfand--Zetlin polytopes.
This approach allows us to compute the intersection products of Schubert cycles
by intersecting faces of a polytope.
\end{abstract}

\maketitle

\section{Introduction}

In this paper, we explore the connection between the Schubert calculus
and the volume polynomial on spaces of convex polytopes.
We give various representations of Schubert cycles in a complete flag
variety by sums of faces of the Gelfand--Zetlin polytope.
Our work is motivated by the rich interplay between algebraic geometry and
convex polytopes, originally explored for toric varieties and recently
extended to a more general setting in \cite{KK}.

One of our main tools is a construction of \cite{PKh}, which, to every convex polytope
$P\subset\R^d$, associates a graded commutative ring $R_P$ (called the {\em polytope ring})
satisfying the Poincare duality (see \cite{T} or Section \ref{s.polytopes}).
For an {\em integrally simple} polytope $P$ (simple means that there
are exactly $d=\dim(P)$ edges meeting at each vertex, and integrally simple means
that primitive integer vectors parallel to the edges generate the lattice $\Z^d$),
the ring $R_P$ is isomorphic to the Chow ring of the corresponding smooth toric variety $X_P$ \cite{PKh}.
Faces of $P$ give rise to certain elements of $R_P$, which generate $R_P$ as an additive group.
If $[F]$ is the element of $R_P$ corresponding to a face $F$, then
$[F]\cdot [G]=[F\cap G]$ in $R_P$, provided that $F$ and $G$ are transverse.
Single faces of $P$ represent cycles given by the closures of the torus orbits in $X_P$.
In this paper, we are primarily interested in the case, where $P$ is not simple.
Kiumars Kaveh has related the polytope rings of some non-simple
polytopes to the Chow rings of smooth non-toric spherical varieties \cite{Kaveh}.
In particular, he observed that the ring $R_P$ for the {\em Gelfand--Zetlin polytope}
$P=P_\lambda$ (which is not simple) associated with a strictly dominant weight
$\l=(\l_1,\ldots,\l_n)\in\Z^n$ of the group $GL_n(\C)$ is isomorphic to the Chow ring
of the variety $X$ of complete flags in $\C^n$.

When $P$ is not simple, there is no straightforward correspondence between faces of $P$ and elements of $R_P$.
One of the results of the present paper is a general construction that
with every element of $R_P$ associates a linear combination of faces of $P$
(though not every face of $P$ corresponds to an element of $R_P$).
Namely, we embed the ring $R_P$ into a certain $\Z$-module $M_P$, whose
elements can be regarded as linear combinations of arbitrary faces of $P$ modulo some relations
(see Section \ref{s.polytopes}).
The module $M_P$ depends on the choice of a {\em resolution} of $P$.
On the algebro-geometric level, $R_P$ can be regarded as the subring of the Chow ring of the singular toric variety $X_P$
generated by the Picard group and $M_P$ can be constructed using a resolution of
singularities for $X_P$.
However, we describe $M_P$  in elementary terms using convex geometry.
A crucial feature of such representations by sums of faces is that we can still multiply
elements of $R_P$ by intersecting faces (assuming the faces we intersect are transverse).

While our construction applies to any convex polytope $P$, it is especially interesting to
study the case, where $P=P_\lambda$ is a Gelfand--Zetlin polytope,
due to the isomorphism $R_P\simeq CH^*(X)$ for the flag variety $X$.
Recall that $CH^*(X)$ (as a group) is a free abelian group with the basis of Schubert cycles.
In particular, our construction allows to represent Schubert cycles as linear combinations of faces of the Gelfand--Zetlin polytope in many different ways (see Theorem \ref{t.FK}, Proposition \ref{P:linrels}, Corollary \ref{c.FK}), which has applications to Schubert calculus.

The relation between Schubert varieties and faces of the Gelfand--Zetlin polytope
was first investigated in \cite{KThesis}, and then by different methods also in
\cite{KM} and \cite{VK}.
The approach of \cite{KM} (via degenerations of Schubert varieties to subvarieties of a singular toric variety) seems to be the closest to ours.
However, the results of \cite{KM} can not be directly used for purposes of Schubert calculus,
since only one degeneration is constructed for each Schubert variety.
The polytope ring $R_P$ and the $\Z$-module $M_P$ allow us to bypass toric degenerations and do all calculations with faces 
directly in $R_P$.

Given two Schubert cycles $[X^w]$ and $[X^{w'}]$, we can represent
$[X^w]$ and $[X^{w'}]$ as sums of faces so that every face appearing in the decomposition
of $[X^w]$ is transverse to every face appearing in the decomposition of $[X^{w'}]$ (see Corollary \ref{c.Richardson}).
This allows us to represent the intersection of any two Schubert cycles by
linear combinations of faces with nonnegative coefficients.
This might lead to a positive formula for the structure constants (which are triple products $[X^w][X^{w'}][X^{w''}]$) by counting vertices of the Gelfand--Zetlin polytope.

The connection between Schubert calculus and Gelfand--Zetlin polytopes stems from the representation theory of $GL_n(\C)$.
Recall that by definition of Gelfand--Zetlin polytopes the integer points inside and at the boundary of $P_\l$ parameterize a natural basis (a {\em Gelfand--Zetlin basis}) in the irreducible highest weight $GL_n$--module $V_\l$ with the highest weight $\l$.
In particular, with every integer point $z\in P_\l$ we can associate its weight $p(z)$ in the character lattice of $GL_n$.
To derive some presentations of Schubert cycles by sums of faces we establish the following relation between {\em Demazure submodules} of $V_\l$ and faces of $P_\l$.
For every Schubert variety $X^w$ and a strictly dominant weight $\l$, we realize the corresponding Demazure character as the exponential sum $\sum e^{p(z)}$ where $z$ runs over integer points  in the union of all {\em rc-faces, or reduced Kogan faces} (see Section \ref{s.GZ}) of $P_\lambda$ with permutation $w$ (see Theorem \ref{t.Demazure}).
This generalizes the identity from \cite[Corollary 15.2]{PS} for the Demazure character
of a {\em $132$--avoiding}, or {\em Kempf}, permutation $w$
(such permutations are also called \emph{dominant}, but we will use the term ``Kempf'' instead).
Note that a permutation is Kempf if and only if there is a unique face
with this permutation (see \cite[Proposition 2.3.2]{KThesis}), and this is exactly
the face considered in \cite{PS}.

To prove our formula for the Demazure character we use elementary convex geometry together with a simple
combinatorial procedure for dealing with divided difference operators (called {\em mitosis})
introduced in \cite{KnM}  (see also \cite{Mi} for an elementary exposition).
In particular, our proof yields a geometric realization of mitosis (see Subsection \ref{ss:paramitosis}).
As a byproduct, we construct a minimal realization of a simplex as a cubic complex different
from those previously known (see Proposition \ref{p.cubic}).

This paper is organized as follows.
In Section~\ref{s.polytopes}, we recall the definition of the polytope ring $R_P$, discuss
its properties and construct a module $M_P$ for a non-simple $P$.
In Section~\ref{s.GZ}, we study the polytope rings of the Gelfand--Zetlin polytopes.
In Section~\ref{s.cycles}, we represent Schubert cycles by faces.
In Section~\ref{s.Demazure}, we give formulas for Demazure characters, Hilbert functions
and degrees of Schubert varieties in terms of faces and deduce from these formulas
some of the results of Section~\ref{s.cycles}.
In Section~\ref{s.keylemma}, we introduce a simple geometric version of mitosis
({\em paramitosis}) and use it to prove formulas of
Section~\ref{s.Demazure} for Demazure characters.

{\em Acknowledgements.} This project was started when the first, second and third
authors, respectively, were affiliated with the Max Planck Institute for Mathematics
(MPIM), Hausdorff Center for Mathematics in Bonn and Jacobs University Bremen.
The project was continued when the first and the third author visited the
Freie Universit\"at Berlin and the MPIM, Bonn. We would like to thank these institutions
for hospitality, financial support and excellent working conditions.

The authors are grateful to Michel Brion, Askold Khovanskii and Allen Knutson for useful
discussions.

\section{Polytope ring} \label{s.polytopes}

\subsection{Rings associated with polynomials}
Following \cite{PKh}, we associate a graded commutative ring with any polynomial.
We will later specialize to the case of the volume polynomial on a
space of polytopes with a given normal fan.
Let $\Lambda_f$ be a lattice, i.e. a free $\Z$-module, and $f$ a homogeneous polynomial
on the real vector space $V_f=\Lambda_f\otimes\R$ containing the lattice $\Lambda_f$.
The symmetric algebra $\Sym(\Lambda_f)$ of $\Lambda_f$ can be
thought of as the ring of differential operators with constant
integer coefficients acting on $\R[V_f]$, the space of all polynomials on $V_f$.
If $D\in\Sym(\Lambda_f)$ and $\phi\in\R[V_f]$, then we write
$D\phi\in\R[V_f]$ for the result of this action.
Define $A_f$ as the homogeneous ideal in $\Sym(\Lambda_f)$ consisting of all
differential operators $D$ such that $Df=0$.
Set $R_f=\Sym(\Lambda_f)/A_f$.
We call this ring {\em the ring associated with the polynomial $f$}.

Let $\Lambda_g$ be another lattice, and $\sigma:\Lambda_g\to\Lambda_f$
a homomorphism of lattices.
Define the polynomial $g\in\R[V_g]$ as $\sigma^*(f)=f\circ\sigma$.
We want to describe a relation between the associated rings $R_f$ and $R_g$.
Unfortunately, there is no natural homomorphism between these rings.
However,

\begin{prop}
\label{P:Mod}
There is a natural abelian group $M_{f,g}$, a natural epimorphism
$\pi:R_f\to M_{f,g}$ and a natural monomorphism
$\iota:R_g\to M_{f,g}$ such that
$$
\pi(\tilde\alpha\tilde\beta)=\iota(\alpha\beta)
$$
whenever $\pi(\tilde\alpha)=\iota(\alpha)$ and $\pi(\tilde\beta)=\iota(\beta)$.
\end{prop}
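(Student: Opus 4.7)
The proof rests on the chain-rule identity
$$D \cdot g \;=\; \sigma^*\!\bigl(\phi(D) \cdot f\bigr), \qquad D \in \Sym(\Lambda_g),$$
where $\phi := \Sym(\sigma) : \Sym(\Lambda_g) \to \Sym(\Lambda_f)$ is the ring homomorphism induced by $\sigma$. I would verify this first on degree-one generators $D = v \in \Lambda_g$ by the ordinary chain rule ($v$ acting on $\sigma^* f$ yields the pullback of $\sigma(v)$ acting on $f$), and extend it to all of $\Sym(\Lambda_g)$ by induction on degree, using that $\phi$ is a ring homomorphism and that $\sigma^*$ intertwines the $\Sym(\Lambda_g)$-action with the $\phi$-twisted $\Sym(\Lambda_g)$-action.

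I then realize $R_f$ concretely as the graded $\Z$-module of derivatives $R_f \cdot f \subseteq \R[V_f]$, and define
$$M_{f,g} \;:=\; \sigma^*(R_f \cdot f) \;\subseteq\; \R[V_g].$$
The map $\pi: R_f \to M_{f,g}$ sending $[D]$ to $\sigma^*(D f)$ is manifestly surjective. The map $\iota: R_g \to M_{f,g}$ sending $[D']$ to $D' g$ lands in $M_{f,g}$ by the chain-rule identity, and is injective because $D' g = 0$ is exactly the defining condition $D' \in A_g$. Note that neither map needs to be a ring homomorphism on its own, so the multiplicative compatibility is a nontrivial extra requirement.

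The compatibility is the main obstacle. Lifting $\tilde\alpha = [D_1], \alpha = [D'_1], \tilde\beta = [D_2], \beta = [D'_2]$ and setting $E_i := D_i - \phi(D'_i)$, the hypothesis $\pi(\tilde\alpha_i) = \iota(\alpha_i)$ becomes $E_i f \in \ker\sigma^*$. A direct expansion gives
$$D_1 D_2\, f - \phi(D'_1)\phi(D'_2)\,f = \phi(D'_1)\,E_2 f + E_1\,\phi(D'_2)\,f + E_1 E_2 f.$$
The first two summands are annihilated by $\sigma^*$ because $\phi(\Sym(\Lambda_g))$ consists of differentiations along directions tangent to $\sigma(V_g)$, and such operators preserve the ideal $\ker\sigma^* \subseteq \R[V_f]$ of polynomials vanishing on $\sigma(V_g)$. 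The delicate point is the cross term $E_1 E_2 f$: an arbitrary $E_1 \in \Sym(\Lambda_f)$ need not preserve $\ker\sigma^*$, so one must either exploit additional structure of $f$ (in the intended application, $f$ is a volume polynomial whose derivatives enjoy extra vanishing relations coming from convex geometry of the polytope and its resolution) or, if necessary, pass to a finer quotient of $M_{f,g}$ engineered to absorb these residual cross terms while preserving injectivity of $\iota$. Once this balancing act is carried out, $\sigma^*(D_1 D_2 f) = \sigma^*(\phi(D'_1 D'_2) f) = \iota(\alpha\beta)$, yielding the required compatibility and completing the proof.
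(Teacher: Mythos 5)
Your construction is the paper's construction in disguise: the submodule $\sigma^*(R_f\cdot f)\subseteq\R[V_g]$ is canonically isomorphic to $\Sym(\Lambda_f)/A_{f,g}$, where $A_{f,g}=\{D\ :\ \sigma^*(Df)=0\}$ is the submodule the paper quotients by, and your $\pi$, $\iota$ and chain-rule identity $Dg=\sigma^*(\sigma_*(D)f)$ are exactly the paper's, as are the surjectivity and injectivity arguments. The only divergence is in the last step. Where the paper declares the compatibility $\pi(\tilde\alpha\tilde\beta)=\iota(\alpha\beta)$ to be an immediate consequence of $\sigma_*(DE)=\sigma_*(D)\sigma_*(E)$, you expand $D_1D_2f-\sigma_*(D'_1)\sigma_*(D'_2)f$ into three summands and observe that multiplicativity of $\sigma_*$ (equivalently, the fact that $A_{f,g}$ is a module over $\sigma_*(\Sym(\Lambda_g))$ but not an ideal of $\Sym(\Lambda_f)$) kills only the two mixed terms, leaving the cross term $E_1E_2f$.

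As written, your proof is therefore incomplete: you never dispose of $E_1E_2f$, but only remark that one should ``exploit additional structure of $f$'' or ``pass to a finer quotient'', without doing either, and then assert the conclusion. That is a genuine gap. I should add, though, that the difficulty you isolate is real and is not resolved by the paper's one-line argument either, which likewise accounts only for the two mixed terms. For a general homogeneous $f$ the cross term does survive: take $f=x^4+x^2y^2$ and $\sigma(t)=(t,0)$; then $E=\partial_y$ satisfies $\sigma^*(Ef)=0$ while $\sigma^*(E^2f)=2t^2\neq 0$, so $\tilde\alpha=\tilde\beta=[\partial_y]$ and $\alpha=\beta=0$ satisfy the hypotheses but violate the conclusion for $M_{f,g}=\Sym(\Lambda_f)/A_{f,g}$. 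Closing the argument thus requires genuine additional input --- either a special property of the volume polynomials $vol_Q$, $vol_P$ to which the proposition is actually applied, or a restriction of the statement to the particular lifts used later in the paper --- and neither your proposal nor the paper's proof supplies it. Flagging the cross term was the right instinct; the missing step is to actually kill it.
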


This proposition can be used in the following way.
Elements of $R_g$ can be embedded naturally into $M_{f,g}$.
Although elements of $M_{f,g}$ cannot be multiplied in general,
we consider the lifts to $R_f$ of two elements coming from $R_g$,
multiply them in $R_f$, and project the product back to $M_{f,g}$.
In many cases, this is easier than multiplying two elements of $R_g$
directly.

\begin{proof}
  Consider a $\Z$-submodule $A_{f,g}$ of $\Sym(\Lambda_f)$ consisting
  of all operators $D$ such that $\sigma^*(Df)=0$.
  Set $M_{f,g}=\Sym(\Lambda_f)/A_{f,g}$.
  Clearly, $A_f\subset A_{f,g}$; thus we obtain a natural projection
  $\pi:R_f\to M_{f,g}$.
  Let $\sigma_*:\Sym(\Lambda_g)\to\Sym(\Lambda_f)$ be the homomorphism
  induced by the map $\sigma$.
  For a differential operator $D\in\Sym(\Lambda_g)$, let $[D]$
  denote the class of $D$ in the ring $R_g$.
  We define $\iota([D])$
  as the class in $M_{f,g}$ of the operator $\sigma_*(D)$.

  To verify that $\iota([D])$ is well defined, we need the following formula
  $$
  \sigma^*(\sigma_*(D)\phi)=D\sigma^*\phi
  $$
  for every $\phi\in\R[V_f]$.
  Indeed, this formula is obviously true if $D\in\Lambda_g$, and
  both parts of this formula depend multiplicatively on $D$.
  In particular, we have
  $$
  \sigma^*(\sigma_*(D)f)=D\sigma^*f=Dg,
  $$
  which is equal to zero whenever $D$ is in $A_g$.
  It follows that the element $\iota([D])$ is well defined: if $D\in A_g$, then
  $\sigma_*(D)\in A_{f,g}$.
  It also follows from the same formula that $\iota$ is injective:
  if $\iota([D])=0$, i.e. $\sigma_*(D)\in A_{f,g}$, then $D\in A_g$.

  It remains to prove that $\pi(\tilde\alpha\tilde\beta)=\iota(\alpha\beta)$
  whenever $\pi(\tilde\alpha)=\iota(\alpha)$ and $\pi(\tilde\beta)=\iota(\beta)$.
  But this is an immediate consequence of the formula
  $\sigma_*(DE)=\sigma_*(D)\sigma_*(E)$.
\end{proof}

\subsection{The volume polynomial}
Consider the set of all convex polytopes of dimension $d$ in $\R^d$.
This set can be endowed with the structure of a commutative semigroup using {\em Minkowski sum}
$$
P_1+P_2=\{x_1+x_2\in\R^n\ |\ x_1\in P_1,\ x_2\in P_2\}
$$
It is not hard to check that this semigroup has cancelation property.
We can also multiply polytopes by positive real numbers using dilation:
$$
\lambda P=\{\lambda x\ |\ x\in P\},\quad \lambda\ge 0.
$$
Hence, we can embed the semigroup of convex polytopes into its Grothendieck group $V$,
which is a real (infinite-dimensional) vector space.
The elements of $V$ are called {\em virtual polytopes}.
Recall that two convex polytopes are called {\em analogous} if they have the same normal fan,
i.e. there is a one-to-one correspondence between the faces of $P$ and the
faces of $Q$ such that any linear functional, whose restriction to $P$
attains its maximal value at a given face $F\subseteq P$ has the
property that its restriction to $Q$ attains its maximal value at the corresponding face of $Q$
(the set of linear functionals, whose restrictions to $P$ attain their maximal values at a face
$F\subset Q$, form a cone $C_F$; the {\em normal fan} of $P$ is defined as the
set of cones $C_F$ corresponding to all faces $F\subseteq Q$).
A virtual polytope is said to be analogous to $P$ if it can be
represented as a difference of two convex polytopes analogous to $P$.
All virtual polytopes analogous to $P$ form a finite dimensional subspace $V_P\subset V$.
On the vector space $V$, there is a homogeneous polynomial $vol$ of degree $n$, called the
{\em volume polynomial}.
Fix a constant (translation invariant) volume form on $\R^d$.
If an integer lattice $\Z^d\subset\R^d$ is fixed, we will always choose
this volume form to take value 1 on the fundamental parallelepiped of $\Z^d$.
The volume form on $\R^d$ being fixed, the volume
polynomial on the space $V$ is uniquely characterized by the property that its
value $vol(P)$ on any convex polytope $P$ is equal to the volume of $P$.
We will be interested in the restriction $vol_P$ of the volume polynomial
$vol$ to the subspace $V_P$ of all virtual polytopes analogous to $P$.

Consider an integer convex polytope $P$ (i.e. a convex polytope with integer vertices)
of dimension $n$, not necessarily simple.
Let $\Lambda_P$ be a lattice in $V_P$ generated by some integer polytopes
analogous to $P$ (we do not assume that $\Lambda_P$ contains all
integer polytopes analogous to $P$; thus this lattice may depend on some extra
choices rather than only on $P$).
Suppose that $Q$ is a convex polytope with integer vertices, whose
normal fan is a simplicial subdivision of the normal fan of $P$.
In this case, $Q$ is called a {\em resolution} of $P$ (note that,
since the normal fan of $Q$ is simplicial, the polytope $Q$ is simple).
With the volume polynomial $vol_P$ restricted to the lattice $\Lambda_P$,
we associate the {\em polytope ring} $R_P:=R_{vol_P}$.
Similarly, for the simple polytope $Q$, we consider the ring $R_Q:=R_{vol_Q}$ associated
with the volume polynomial $vol_Q$ on the lattice $\Lambda_Q$
(we always assume that this lattice is generated by {\em all} integer polytopes
analogous to $Q$).
We will use the $\Z$-module $M_{Q,P}:=M_{vol_Q,vol_P}$ introduced in Proposition \ref{P:Mod}
together with the homomorphisms $\iota:R_P\to M_{Q,P}$ and $\pi:R_Q\to M_{Q,P}$.
Since $\iota$ is a canonical embedding, we will identify
elements of $R_P$ with their $\iota$-images in $M_{Q,P}$.
With every face $\tilde F$ of $Q$, we can associate a face $F$ of $P$ called the
{\em $P$-degeneration} of $\tilde F$ (or just {\em degeneration} if $P$ is fixed).
A face $F$ of $P$ is called {\em regular (with respect to $Q$)} if
there is only one face $\tilde F$ of $Q$ such that $F$ is the degeneration of
$\tilde F$.

\begin{prop}
\label{P:reg}
  Suppose that $v$ is a simple vertex of $P$, i.e. exactly $d=\dim(P)$
  facets of $P$ meet at $v$.
  Moreover, suppose that no facet of $Q$ degenerates to a face of smaller dimension.
  Then any face of $P$ containing $v$ is regular.
\end{prop}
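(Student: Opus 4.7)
The plan is to translate the question into combinatorics of normal fans. Write $\Sigma_P$ and $\Sigma_Q$ for the normal fans of $P$ and $Q$, and write $C_F$ (respectively $C_{\tilde F}$) for the cone of $\Sigma_P$ (resp.\ $\Sigma_Q$) corresponding to a face $F$ of $P$ (resp.\ $\tilde F$ of $Q$). Under this dictionary, the $P$-degeneration of $\tilde F$ is the face $F$ of $P$ whose cone $C_F$ is the unique cone of $\Sigma_P$ whose relative interior contains the relative interior of $C_{\tilde F}$. Since $\Sigma_Q$ refines $\Sigma_P$, every $C_F$ is a union of cones of $\Sigma_Q$, and $F$ is regular precisely when this subdivision is trivial, i.e., when $C_F \in \Sigma_Q$ itself.

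My first step would be to interpret the hypothesis on $Q$. Facets of $Q$ correspond to rays of $\Sigma_Q$, and requiring that none of them degenerates to a face of smaller dimension of $P$ amounts to requiring that every ray of $\Sigma_Q$ lies set-theoretically inside a ray of $\Sigma_P$, hence (both being one-dimensional) coincides with it. So the one-skeletons of $\Sigma_P$ and $\Sigma_Q$ already agree: although $\Sigma_Q$ refines $\Sigma_P$, no new rays are introduced.

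Next I would exploit that $v$ is a simple vertex. The cone $C_v \in \Sigma_P$ is $d$-dimensional and simplicial, generated by exactly $d$ rays $\rho_1,\ldots,\rho_d$ of $\Sigma_P$. Every ray of $\Sigma_Q$ contained in $C_v$ is, by the previous step, one of the $\rho_i$, so the restriction of $\Sigma_Q$ to $C_v$ is a fan whose only rays are the extreme rays of the simplicial cone $C_v$; the only such fan is the one with $C_v$ as its unique maximal cone, and in particular $C_v \in \Sigma_Q$. For any face $F$ of $P$ containing $v$, the cone $C_F$ is a face of $C_v$, hence is generated by some subset of $\{\rho_1,\ldots,\rho_d\}$, and the same argument applied to $C_F$ shows that $C_F \in \Sigma_Q$. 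By the characterization of regularity above, $F$ is regular.

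The only real obstacle is confirming the cone dictionary for degenerations and for the definition of regularity; once that is set up the proof reduces to the short observation that a simplicial cone whose extreme rays are already present in a refining fan cannot be subdivided nontrivially.
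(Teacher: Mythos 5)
Your proof is correct. The dictionary you set up is the right one: degeneration of $\tilde F$ corresponds to the unique cone of the normal fan $\Sigma_P$ whose relative interior contains that of $C_{\tilde F}$, and $F$ is regular exactly when $C_F$ is itself a cone of $\Sigma_Q$ (if it is, any cone of $\Sigma_Q$ contained in $C_F$ meets it in a common face, so only $C_F$ has relative interior inside that of $C_F$; if it is not, the induced subdivision of $C_F$ has at least two top-dimensional pieces). The hypothesis on facets does translate into ``no new rays,'' and a simplicial cone whose refining fan uses only its extreme rays cannot be subdivided nontrivially.

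The paper argues the dual, ``primal'' version of the same combinatorial fact, working directly with the polytope: it takes the $d$ facets $\G_1,\dots,\G_d$ of $P$ through $v$ and the corresponding parallel facets $\tilde\G_1,\dots,\tilde\G_d$ of $Q$, observes that intersections of distinct subsets of the $\G_i$ are distinct faces of $P$ (this is where simplicity of $v$ enters), that each intersection of the $\tilde\G_i$ degenerates to the intersection of the corresponding $\G_i$ with the same dimension, and asserts that all remaining faces of $Q$ degenerate to faces not containing $v$. That last assertion is exactly the content of your observation that the restriction of $\Sigma_Q$ to $C_v$ introduces no cones beyond the faces of $C_v$; the paper leaves it as ``clearly,'' while your fan-theoretic phrasing makes it explicit. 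So the two proofs rest on the same idea, but yours buys a cleaner justification of the one step the paper glosses over, at the cost of having to pin down the cone dictionary for degeneration and regularity, which the paper never spells out either.
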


\begin{proof}
 Let $\G_1$, $\dots$, $\G_d$ be all facets of $P$ containing the vertex $v$ (which are clearly regular).
 Denote by $\tilde\G_1$, $\dots$, $\tilde\G_d$ the corresponding
 (parallel) facets of $Q$.
 Note that the intersections of different subsets of $\{\G_1,\dots,\G_d\}$
 are different faces of $P$.
 Clearly, any intersection of facets $\tilde\G_i$ degenerates into
 the intersection of the corresponding facets $\G_i$ (which has
 the same dimension), and all other faces of $Q$ degenerate to
 faces of $P$ not containing the vertex $v$.
 This proves the desired statement.
\end{proof}

\subsection{Structure of polytope rings}
We now give more details on the structure of the ring $R_Q$.
For every facet $\Gamma$ of $Q$, there is a differential operator
$\d_\Gamma\in\Sym(\Lambda_Q)$ such that, for every convex polytope $Q'$
analogous to $Q$, the number $\d_\Gamma vol_Q(Q')$ is
the $(n-1)$-dimensional volume of the facet of $Q'$ parallel to $\Gamma$.
The ideal $A_Q:=A_{vol_Q}$ is very easy to describe.
It is generated (as an ideal) by the following two groups of differential
operators \cite{T}:
\begin{itemize}
  \item the images of integer vectors $a\in\Z^d$ under the natural
  inclusion of $\Z^d$ into $\Lambda_Q=\Sym^1(\Lambda_Q)$ such that $Q+a$ is the
  parallel translation of $Q$ by the vector $a$;
  \item the operators of the form $\d_{\Gamma_1}\dots\d_{\Gamma_k}$,
  where $\Gamma_1\cap\dots\cap\Gamma_k=\varnothing$.
\end{itemize}

The volume polynomial on the spaces $V_Q$ was previously used in \cite{PKh}
to describe the cohomology rings of smooth toric varieties.
We briefly recall this description.
Every integer polytope $Q$ defines a polarized toric variety $X_Q$.
If $Q$ is integrally simple, then $X_Q$ is smooth.
In this case, the Chow ring of $X_Q$ (or, equivalently, the cohomology ring
$H^{2*}(X_Q,\Z)$) is isomorphic to $R_Q$ \cite[1.4]{PKh}.

This description is very useful.
Firstly, it is functorial.
Secondly, it is clear from the definition that the nonzero homogeneous components of
the ring $R_Q$ have degrees $\le d$ (since the volume polynomial has degree $d$)
and that $R_Q$ has a non-degenerate pairing (Poincar\'e duality) defined by $(D_1,D_2):=D_1D_2(vol_Q)\in\Z$
for any two homogeneous elements $D_1$, $D_2\in\Sym(\Lambda_Q)$ of complementary degrees.
The Poincar\'e duality on the ring $R_Q$ is a key ingredient
in the proof of the isomorphism between $R_Q$ and $H^{2*}(X_Q,\Z)$ (see \cite{Kaveh} for more details).
Note that there is another functorial description \cite{Br97} of the Chow ring of $X_Q$
via piecewise polynomial functions on fans but for this description the upper bound on
the degrees and the Poincar\'e duality are harder to check directly.
Also, a first known (non-functorial) description of the Chow ring (by generators and
relations) follows easily from the definition of the ring $R_Q$ (see e.g. \cite{T}).
So it seems that the polytope rings give the most convenient description
of the Chow rings of smooth toric varieties.

Note that if a polytope $P$ is not simple, the ring $R_P$ makes sense, has all
nonzero homogeneous components in degrees $\le d$ and satisfies the Poincar\'e duality.
However, its relation to the Chow ring of (now singular) toric
variety $X_P$ is unclear partly because the latter no longer enjoys the Poincar\'e duality.
On the other hand, the ring $R_P$ for non-simple polytopes is sometimes related to the Chow rings of
smooth non-toric varieties as was noticed by  Kaveh \cite{Kaveh}.

We now discuss some important properties of the isomorphism $R_Q\simeq CH^*(X_Q)$
for a simple polytope $Q$.
This isomorphism allows us to identify the algebraic cycles on $X_Q$
with the linear combinations of the faces of $Q$.
The dimension of the space $V_Q$ is equal to the number $N(Q)$ of facets of $Q$
(since we can shift all support hyperplanes of $Q$ independently).
Note that for a non-simple polytope $P$ the dimension of $V_P$ is strictly less than $N(P)$
(e.g. if $P$ is an octahedron, then $V_P$ has dimension 4).
For simple $Q$, the space $V_Q$ has natural coordinates called the {\em support numbers}.
There are as many support numbers as facets of $Q$.
The support numbers are defined by fixing $N=N(Q)$ linear functionals $\xi_\G$
on $\R^d$ corresponding to facets $\G$ of $Q$ such that every facet $\G$ of $Q$
is contained in the hyperplane $\xi_\G(x)=H_\G$ for some constants $H_\G$,
and the polytope $Q$ satisfies the inequalities $\xi_\G(x)\le H_\G$.
If $\G_1$, $\dots$, $\G_N$ are all facets of $Q$, then any collection of real numbers
$(H_{\G_1},\ldots,H_{\G_N})$ defines a unique  (possibly virtual) polytope in $V_Q$.
When dealing with integer polytopes, we always choose $\xi_\G$ to be a primitive
integer covector orthogonal to $\G$.
In this case, $H_\G$ is (up to a sign) the integer distance between the
origin and the hyperplane containing $\G$.

If we choose the volume forms and the linear functionals $\xi_\G$ consistent
with the integer lattice (in the sense explained above),
then the differential operators $\d_\G$ coincide with
the partial derivatives with respect to the support numbers $H_\G$.
For a face $F=\Gamma_1\cap\dots\cap\Gamma_k$ of codimension $k$,
we set $\d_F=\d_{\Gamma_1}\dots\d_{\G_k}$, and denote by $[F]$ the class of $\d_F$ in the ring $R_Q$.
The elements $[F]$ corresponding to the faces of $Q$ generate $R_Q$ as an Abelian group.
Moreover, it suffices to take some special faces, called {\em separatrices} in \cite{T}.
There is an explicit algorithm to represent the product $[F_1]\cdot [F_2]\in R_Q$
as a linear combination of faces, i.e. of elements of the form $[F]$ corresponding
to faces $F$ of $Q$.
This algorithm resembles the well-known algorithm from intersection theory:
we need to replace $[F_1]$ by a linear combination of faces that are transverse to $F_2$.
The linear relations between facets of $Q$ follow immediately from the description
of the ideal $A_Q$ given above.
They have the form
$$
\sum\xi_\G(a)[\G]=0, \eqno(4)
$$
where $a\in\R^d$ is any vector, and the sum is over all facets of $Q$.
Indeed, the volume polynomial is invariant under parallel translations.
Therefore, the $t$-derivative of $vol(\cdot+ta)$ is zero (where $\cdot$ replaces
any fixed element of $V_Q$).
By the chain rule, this derivative is equal to $\sum\xi_\G(a)\d_\G vol(\cdot)$.
Any linear relation between the elements $[\G]$ has this form (see \cite{T}).

If $Q$ is a resolution of $P$, we will be interested in representations of
elements $\alpha\in R_P$ by linear combinations of faces of $Q$ i.e.
in the following form
$$
\alpha=\pi\left(\sum [F]\right),
$$
where the summation is over some set of faces of $Q$. Then Proposition \ref{P:Mod}
allows us to compute the product of two elements $\alpha, \alpha'\in R_P$ as follows.
If we find a representation
$$
\alpha'=\pi\left(\sum [F']\right),
$$
such that all $F'$ are transverse to all $F$, then
$$
\alpha\cdot\alpha'=\pi\left(\sum [F\cap F']\right).
$$

In the sequel, we will also use the following lemma, which is a direct corollary
of the definition of $M_{Q,P}$.
\begin{lemma} \label{L:Poincare}Let $\a$ and $\b$ be two homogeneous elements in $R_Q$ of the same degree.
We have $\pi(\a)=\pi(\b)$ in $M_{Q,P}$ iff
$\pi(\a \g)=\pi(\b\g)$ for all homogeneous $\g\in R_Q$ of complementary degree such that
$\pi(\g)\in R_P\subset M_{Q,P}$.
\end{lemma}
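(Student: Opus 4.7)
The plan is to prove both implications directly from the definitions of $R_Q$, $R_P$, and $M_{Q,P}$, using the key formula $\sigma^*(\sigma_*(D)\phi) = D\sigma^*(\phi)$ established in the proof of Proposition~\ref{P:Mod}. The forward implication will reduce to a one-line application of Proposition~\ref{P:Mod}; the substantive half is the backward implication, which is proved by a pairing argument after first identifying $\pi$ as an isomorphism in top degree.

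For the forward direction ($\Rightarrow$), set $\delta = \a - \b$, so that $\pi(\delta) = 0 = \iota(0)$. If $\g \in R_Q$ satisfies $\pi(\g) = \iota(\g_0)$ for some $\g_0 \in R_P$, then applying Proposition~\ref{P:Mod} to the pairs $(\tilde\alpha,\alpha) = (\delta, 0)$ and $(\tilde\beta,\beta) = (\g,\g_0)$ gives $\pi(\delta\g) = \iota(0\cdot\g_0) = 0$, and therefore $\pi(\a\g) = \pi(\b\g)$ by additivity of $\pi$.

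For the backward direction ($\Leftarrow$), I would begin with the observation that $A_{Q,P}^d = A_Q^d$, so that $\pi\colon R_Q^d \to M_{Q,P}^d$ is an isomorphism. Indeed, for $D \in \Sym^d(\Lambda_Q)$ the element $D\,vol_Q$ is already a scalar, which $\sigma^*$ leaves fixed. Under this identification the hypothesis $\pi(\a\g) = \pi(\b\g)$ becomes the numerical identity $(D_\a - D_\b)D_\g\,vol_Q = 0$ for any lift $D_\g$ of an admissible $\g$. I would then specialize to the canonical lifts $D_\g = \sigma_*(E)$ for $E \in \Sym^{d-k}(\Lambda_P)$, which automatically satisfy $\pi(\g) \in \iota(R_P)$. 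Using commutativity of $\Sym(\Lambda_Q)$ and the key formula,
$$(D_\a-D_\b)\sigma_*(E)\,vol_Q = \sigma_*(E)(D_\a-D_\b)\,vol_Q = E\,\sigma^*\bigl((D_\a-D_\b)\,vol_Q\bigr).$$
By hypothesis this scalar vanishes for every $E \in \Sym^{d-k}(\Lambda_P)$. Since a homogeneous polynomial on $V_P$ of degree $d-k$ annihilated by every constant-coefficient differential operator of the same degree is necessarily zero, I would conclude $\sigma^*((D_\a - D_\b)\,vol_Q) = 0$, i.e., $D_\a - D_\b \in A_{Q,P}$, and hence $\pi(\a) = \pi(\b)$.

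The key step of the argument is the specialization to canonical lifts $\sigma_*(E)$ in the backward direction: it cuts the a priori large collection of admissible $\g$ down to a parametrization by $\Sym^{d-k}(\Lambda_P)$ on which the formula of Proposition~\ref{P:Mod} applies cleanly and the standard detection principle for homogeneous polynomials finishes the job. Once the identification $R_Q^d \cong M_{Q,P}^d$ is in hand, both directions are very short.
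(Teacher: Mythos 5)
Your proof is correct. The paper offers no argument for this lemma beyond the remark that it is a ``direct corollary of the definition of $M_{Q,P}$'', so there is no written proof to compare against; your write-up supplies exactly the unwinding that this remark presupposes. The backward implication is the substantive half, and your argument for it is sound: $A_{Q,P}^d=A_Q^d$ because $\sigma^*$ fixes constants, the canonical lifts $\sigma_*(E)$ with $E\in\Sym^{d-k}(\Lambda_P)$ are admissible choices of $\g$, and the nondegeneracy of the pairing between $\Sym^{d-k}(\Lambda_P)$ and homogeneous polynomials of degree $d-k$ on $\Lambda_P\otimes\R$ yields $\sigma^*((D_\a-D_\b)\,vol_Q)=0$, i.e.\ $D_\a-D_\b\in A_{Q,P}$. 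Two points deserve to be made explicit. First, in your displayed chain the equality $\sigma_*(E)(D_\a-D_\b)\,vol_Q=E\,\sigma^*\bigl((D_\a-D_\b)\,vol_Q\bigr)$ is the key formula $\sigma^*(\sigma_*(E)\phi)=E\sigma^*(\phi)$ applied to $\phi=(D_\a-D_\b)\,vol_Q$ \emph{combined with} the observation that the left-hand side is already a constant and hence unchanged by $\sigma^*$; without that degree count the key formula only computes $\sigma^*$ of the left-hand side, not the left-hand side itself. Second, your forward implication is precisely the multiplicativity assertion of Proposition \ref{P:Mod} specialized to $\alpha=0$; the citation is legitimate, but it is worth noting that this is the only place where the argument leans on that proposition (the backward direction uses only the lifts $\sigma_*(E)$, for which multiplicativity reduces to the trivial identity $\sigma_*(DE)=\sigma_*(D)\sigma_*(E)$), so the forward half of the lemma is exactly as robust as Proposition \ref{P:Mod} and no more. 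Since the only use of the lemma in the paper (the proof of Theorem \ref{t.FK}) requires just the backward implication, the part of your proof that matters is fully self-contained.
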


\subsection{Example: Gelfand--Zetlin polytopes in $\R^3$.}
Consider the polytope $P$ in $\R^3$ given by the following linear inequalities:
$$
a\le x\le b,\quad b\le y\le c,\quad x\le z\le y.
$$
This is a 3-dimensional Gelfand--Zetlin polytope, see Figure 1.
The defining system of linear inequalities for $P$ is usually represented
schematically as follows:
$$
\begin{array}{ccccc}
  a& &b& &c\\
   &x& &y&\\
   & &z& &
\end{array}
$$

\begin{figure}
\includegraphics[width=5cm]{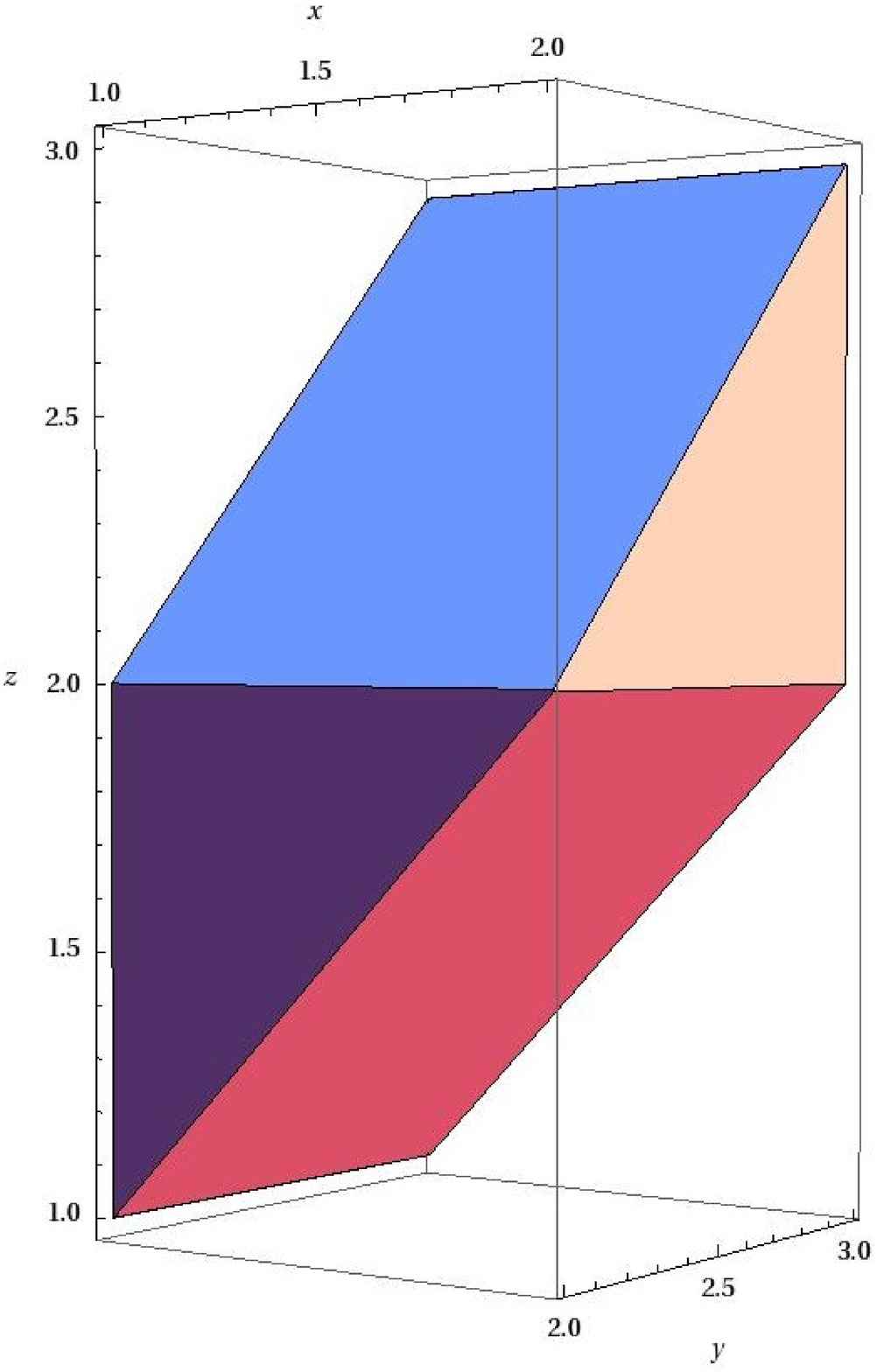}\qquad
\includegraphics[width=5cm]{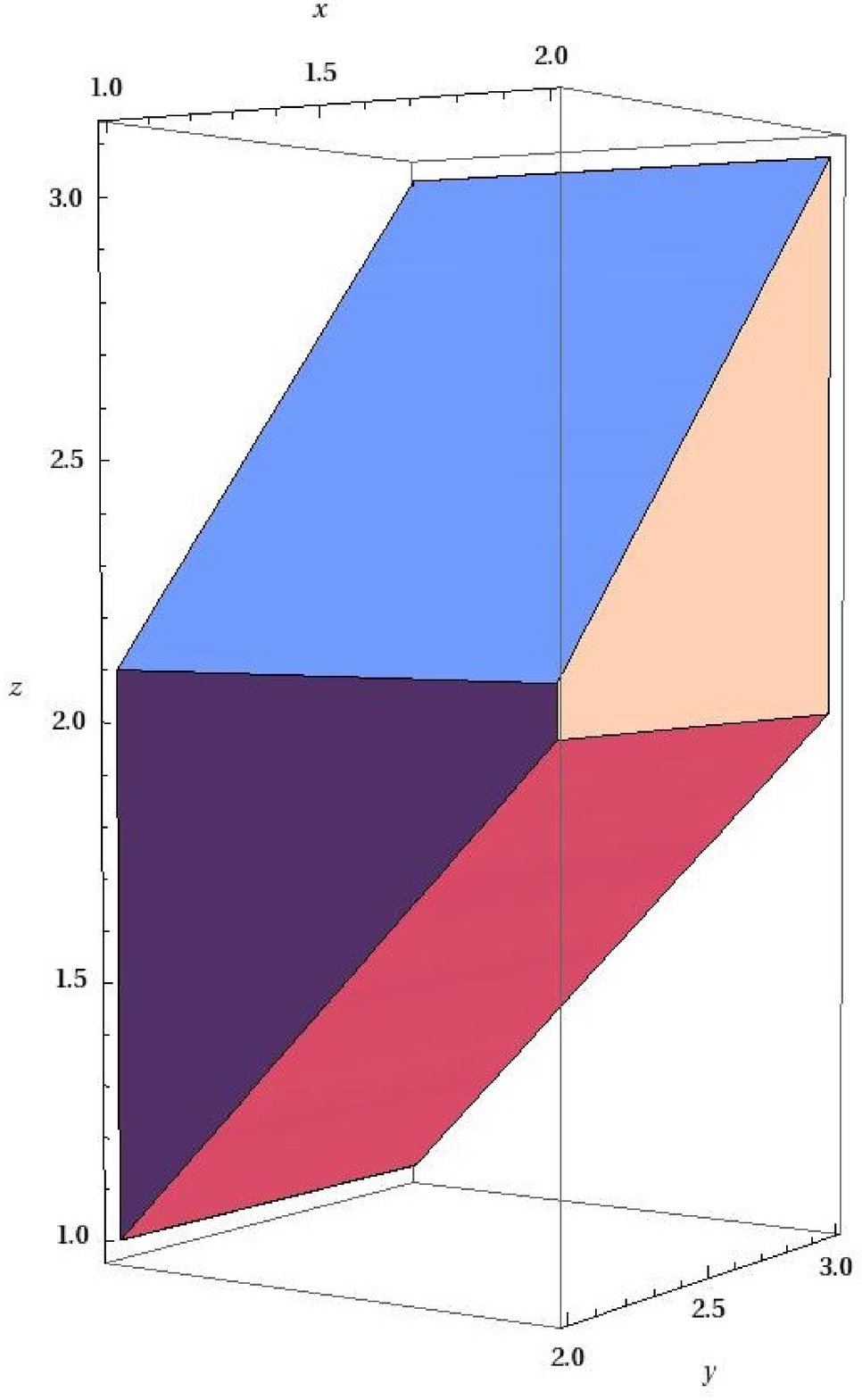}
\caption{A Gelfand--Zetlin polytope for $GL_3$ and its resolution}
\end{figure}

The polytope $P$ can be obtained from the parallelepiped $[a,b]\times [b,c]\times [a,c]$
by removing two prisms:
$$
\{a\le z<x\le b,\ b\le y\le c\},\quad \{b\le y<z\le c,\ a\le x\le b\}.
$$
Therefore, the volume of $P$ is equal to
$$
(b-a)(c-b)(c-a)-\frac{(b-a)^2(c-b)}2-\frac{(c-b)^2(b-a)}2=\frac 12(b-a)(c-b)(c-a)
$$
(it can also be seen geometrically, without any computation, that the part
we are removing is half the volume of the entire parallelepiped).
The ring $R_P$ is spanned by the classes of partial differentiations
$\d_a$, $\d_b$ and $\d_c$.
Moreover, since the volume of $P$ will not change if we shift $a$, $b$ and $c$
simultaneously by the same real number, we have $\d_a+\d_b+\d_c=0$ in $R_P$.
A distinguished set of additive generators of $R_P$ is given by Schubert
polynomials in $-\d_a$ and $-\d_b$, i.e.

$$
\Sf_{s_1s_2s_1}=-\d_a^2\d_b,\quad
\Sf_{s_1s_2}=\d_a\d_b,\quad
\Sf_{s_2s_1}=\d_a^2,\quad
\Sf_{s_2}=-\d_a-\d_b,
$$
$$
\Sf_{s_1}=-\d_a,\quad
\Sf_{id}=1.
$$

Now consider a simple polytope $Q$ given by the following inequalities:
$$
a\leq x\leq b,\quad b\leq y\leq c,\quad x\leq z\leq  y+\eps,
$$
where $\eps>0$ is a fixed small number.
The polytope $Q$ can also be obtained from the parallelepiped
$[a,b]\times [b,c]\times [a,c+\eps]$
by removing two prisms:
$$
\{a\le z<x\le b,\ b\le y\le c\},\quad \{b\le y<z-\eps\le c,\ a\le x\le b\}.
$$
Therefore, the volume of $Q$ is equal to
$$
(b-a)(c-b)(c-a+\eps)-\frac{(b-a)^2(c-b)}2-\frac{(c-b)^2(b-a)}2=
$$
$$
=\frac 12(b-a)(c-b)(c-a)+\eps(b-a)(c-b).
$$
This is a polynomial in $a$, $b$, $c$ and $\eps$.

The ring $R_Q$ is multiplicatively generated by partial differentiations
$\tilde\d_a$, $\tilde\d_b$ and $\tilde\d_c$ (the tildes are just to
distinguish these elements of $R_Q$ from elements $\d_a$, $\d_b$, $\d_c\in R_P$).
We have
$$
\tilde\d_a=-[x=a],\quad
\tilde\d_b=-[y=b]+[x=b],\quad
\tilde\d_c=[y=c].
$$
Formula (4) gives three linear relations between facets of $Q$:
$$
-[x=a]+[x=b]+[z=x]=0,
$$
$$
-[y=b]-[z=y+\eps]+[y=c]=0,
$$
$$
-[x=z]+[z=y+\eps]=0.
$$
We can represent the Schubert polynomials of $\d_a$ and $\d_b$ as the $\pi$-images
of certain elements of $R_Q$ as follows:
$$
\Sf_{s_1}=\pi[x=a],\quad \Sf_{s_2}=\pi([x=a]+[y=b]),
$$
$$
\Sf_{s_2s_1}=\pi[x=z=a],\quad \Sf_{s_1s_2}=\pi[x=a,y=b].
$$
All faces of $Q$ that appear in the right hand sides of these identities
degenerate to regular faces of $P$.
For instance, the expression for $\Sf_{s_2s_1}$ is obtained as follows:
$$
\Sf_{s_2s_1}=\d_a^2=\pi(\tilde\d_a^2)=\pi([x=a]\cdot [x=a])=
$$
$$
=\pi([x=a]\cdot([x=b]+[z=x]))
=\pi([x=a]\cdot[x=b]+[x=z=a]).
$$
The first term in the right hand side vanishes, because the faces $\{x=a\}$ and
$\{x=b\}$ are disjoint.

In this way, it is easy to justify all heuristic calculations with faces in \cite[Section 4]{VK}.

\section{Gelfand--Zetlin polytope and its ring}
\label{s.GZ}

\subsection{Gelfand--Zetlin polytope}
We now consider the ring $R_P$ for the {\em Gelfand--Zetlin polytope}
$P=P_\lambda$  associated with a strictly dominant weight
$\l=(\l_1,\ldots,\l_n)\in\Z^n$ of the group $GL_n(\C)$, i.e.
with an $n$-tuple of integers $\l_i$ such that $\l_i<\l_{i+1}$ for all
$i=1,\dots,n-1$.
Recall that the Gelfand--Zetlin polytope
$P_\l$ is a convex integer polytope in $\R^{d}$,
where $d=n(n-1)/2$, with the property that the integer points inside
and at the boundary of $P_\l$ parameterize a natural basis in the
irreducible representation of $GL_n(\C)$ with the highest weight $\l$.
It can be defined by inequalities
$$
\begin{array}{ccccccccc}
\l_1&       & \l_2    &         &\l_3     &         &\ldots   &         &\l_n   \\
    &\l_{1,1}&         &\l_{1,2}  &         & \ldots  &         &\l_{1,n-1}&       \\
    &       &\l_{2,1} &         &\ldots   &         &\l_{2,n-2}&         &       \\
    &       &         & \ddots  &\ldots   &         &         &         &       \\
    &       &         &\l_{n-2,1}&         &\l_{n-2,2}&         &         &       \\
    &       &         &         &\l_{n-1,1}&         &         &         &       \\
\end{array}
\eqno{(GZ)}
$$
where
$(\l_{1,1},\ldots,\l_{1,n-1};\l_{2,1},\ldots,\l_{2,n-2};\ldots;
\l_{n-2,1},\l_{n-2,2};\l_{n-1,1})$ are coordinates in $\R^d$, and
 the notation
 $$
 \begin{array}{ccc}
  a &  &b \\
   & c &
 \end{array}
 $$
means $a\le c\le b$.
See Figure 1 for a picture of the Gelfand--Zetlin polytope for $G=GL_3$.
Note that Gelfand--Zetlin polytopes $P_\lambda$ and $P_\mu$ are analogous for
any two strictly dominant weights $\lambda$ and $\mu$.
For what follows, we set $P=P_\l$ for some strictly dominant weight $\l$,
and define $\Lambda_{P}$ as the lattice spanned by all Gelfand--Zetlin polytopes
$P_\mu$, where $\mu$ runs through all strictly dominant weights.
The correspondence $\mu\mapsto P_\mu$ establishes a natural isomorphism
between the lattices $\Z^n$ and $\Lambda_P$.
In other words, virtual polytopes in $\Lambda_P$ are parameterized by
arbitrary $n$-tuples of integers, not necessarily strictly increasing.
One can show that the ring $R_P$ does not change if $\Lambda_P$ is replaced by the lattice generated by {\em all}
polytopes analogous to $P_\l$ but we will not need this.

Recall that, to every complete flag $W=W_1\subset\dots\subset W^{n-1}$
in $\C^n$, one associates one-dimensional vector spaces $L_i(W)=W_i/W_{i-1}$.
The disjoint union of $\{W\}\times L_i(W)$ with its natural projection
to $X$ given by the formula $\{W\}\times L_i(W)\mapsto W$ has a structure
of a line bundle over $X$.
This line bundle $\Lc_i$ is called a {\em tautological quotient line bundle}
over $X$.

\begin{thm}[\cite{Kaveh}]
\label{T:iso}
The ring $R_P$ is isomorphic to the Chow ring (and to the cohomology ring)
of the complete flag variety $X$ for $GL_n(\C)$ (note that $\dim(X)=d$)
so that the images in $R_P$ of the differential operators
$\frac{\partial}{\partial\l_1},\ldots,\frac{\partial}{\partial\l_n}$
get mapped to the first Chern classes of the tautological quotient line bundles
$\Lc_1,\dots,\Lc_n$ over $X$.
\end{thm}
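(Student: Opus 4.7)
The plan is to define a ring homomorphism
$$
\Phi: \Sym(\Lambda_P) \to CH^*(X),\qquad \frac{\d}{\d\l_i}\longmapsto c_1(\Lc_i),
$$
show that it descends to $R_P$, and then check that the induced map is a graded ring isomorphism. The essential input is the classical volume formula
$$
\int_X c_1(\Lc_\l)^d \;=\; d!\cdot vol(P_\l),
$$
valid for every strictly dominant $\l$, where $c_1(\Lc_\l)=\sum_i \l_i c_1(\Lc_i)$. This formula follows from Borel--Weil (which identifies $H^0(X,\Lc_\l^{\otimes k})\cong V_{k\l}^*$), Bott's vanishing for higher cohomology, and the defining property that $\dim V_{k\l}$ equals the number of integer Gelfand--Zetlin patterns with top row $k\l$, which is a polynomial in $k$ of degree $d$ with leading coefficient $vol(P_\l)$.

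For well-definedness of $\Phi$, I would show that any $D\in A_P$ maps to zero. Take a homogeneous $D$ of degree $k$ and apply it as a differential operator in the $\l$-variables to both sides of the volume formula. The right-hand side becomes $d!\cdot D\cdot vol(P_\l)=0$ by assumption, while the left-hand side expands, by the Leibniz rule, to $\frac{d!}{(d-k)!}\int_X \Phi(D)\cdot c_1(\Lc_\l)^{d-k}$. Thus $\Phi(D)$ pairs to zero with $c_1(\Lc_\l)^{d-k}$ for every $\l$. Since $c_1(\Lc_1),\dots,c_1(\Lc_n)$ generate $CH^*(X)$ as a ring (Borel's theorem), polarization shows that the powers $c_1(\Lc_\l)^{d-k}$ already span $CH^{d-k}(X)$, and Poincar\'e duality on $X$ then forces $\Phi(D)=0$.

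The same Leibniz computation, specialized to $k=d$, shows that $\Phi$ intertwines the two Poincar\'e pairings:
$$
(D_1D_2)\cdot vol(P_\l) \;=\; \int_X \Phi(D_1)\Phi(D_2)
$$
for $D_1,D_2$ of complementary degrees. Surjectivity of $\Phi$ is again Borel's theorem. A surjective graded ring map between two graded rings with nondegenerate Poincar\'e pairings of the same top dimension $d$ which is compatible with those pairings is automatically an isomorphism: if $\Phi(\alpha)=0$ for some nonzero $\alpha\in R_P$ of degree $k$, choose $\beta\in R_P$ of degree $d-k$ with $(\alpha,\beta)_{R_P}\ne 0$; then $(\Phi(\alpha),\Phi(\beta))_{CH}=(\alpha,\beta)_{R_P}\ne 0$ contradicts $\Phi(\alpha)=0$.

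The main obstacle is the volume formula itself, a nontrivial representation-theoretic input. Everything else is the formal Khovanskii--Pukhlikov paradigm: whenever the volume polynomial on a polytope lattice realizes the top self-intersection polynomial on the Picard lattice of a smooth projective variety whose Chow ring is generated by divisor classes, the polytope ring coincides with the Chow ring, and the assertion about the images of $\d/\d\l_i$ is built into $\Phi$ from the start.
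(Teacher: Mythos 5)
Your proof is correct. Note, however, that the paper does not actually prove Theorem \ref{T:iso}: it attributes the result to Kaveh and only remarks that it ``can also be deduced directly from the Borel presentation for the cohomology ring $H^*(X,\Z)$ using that the volume of $P_\l$ \dots is equal to $\prod_{i<j}(\l_i-\l_j)$ times a constant.'' Your argument has the same skeleton (Borel presentation for surjectivity, a volume identity to control the kernel) but differs in the key input. The route the paper sketches computes $vol(P_\l)$ in closed form as a multiple of the Vandermonde determinant and then must identify the annihilator of the Vandermonde in $\Z[\partial/\partial\l_1,\dots,\partial/\partial\l_n]$ with the ideal of positive-degree symmetric polynomials, i.e.\ it quotes the Gorenstein/harmonic-polynomial structure of the coinvariant algebra. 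You instead never evaluate either side: you establish $\int_X c_1(\Lc_\l)^d=d!\,vol(P_\l)$ abstractly from Borel--Weil(--Bott), the lattice-point interpretation of Gelfand--Zetlin patterns, and Ehrhart/asymptotic Riemann--Roch, and then run the formal Khovanskii--Pukhlikov argument: the two Poincar\'e pairings are intertwined, so a surjection compatible with nondegenerate pairings in the same top degree is an isomorphism. Your version is the one that generalizes (to string polytopes and Newton--Okounkov bodies) and avoids any explicit computation; the paper's sketch is shorter if one is willing to quote both the Vandermonde volume and the structure of $\Z[x_1,\dots,x_n]/(\mathrm{sym}_+)$. Two small points you should make explicit: polarization only shows that the powers $c_1(\Lc_\l)^{d-k}$ span $CH^{d-k}(X)\otimes\Q$, so you need torsion-freeness of $CH^*(X)$ (free on Schubert classes) together with the perfect integral intersection pairing to conclude $\Phi(D)=0$ over $\Z$; and the nondegeneracy of the pairing on $R_P$, which your injectivity step uses, should be recorded --- it is immediate from the definition of $A_P$ as the full annihilator of the homogeneous polynomial $vol_P$, and is stated in Section \ref{s.polytopes}.
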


This theorem can also be deduced directly from the Borel presentation for
the cohomology ring $H^*(X,\Z)$ using that the volume of $P_\l$
(regarded as a function of $\l$) is equal to $\prod_{i<j}(\lambda_i-\lambda_j)$
times a constant.

Along with the Gelfand--Zetlin polytope $P$, we consider its resolution $Q$
such that the number of facets in $Q$ is the same as the number of facets in $P$,
and every support hyperplane of $Q$ intersecting $Q$ by a facet is sufficiently
close to a parallel support hyperplane of $P$ intersecting $P$ by a facet.
This establishes a one-to-one correspondence between facets of $Q$ and
facets of $P$ such that the corresponding facets are parallel.
Nothing in what follows will depend on a particular choice of $Q$.

\subsection{Faces and face diagrams}
It will be convenient to represent faces of $P$ by {\em face diagrams}.
First, replace all $\l_j$ and $\l_{i,j}$ in table $(GZ)$ by dots.
Every face of $P$ is given by a system of equations of the form $a=b$,
where $a$ and $b$ are coordinates represented by adjacent dots in two
consecutive rows.
To represent such an equation, we draw a line interval connecting the
corresponding dots (these line intervals go from northeast to southwest
or from northwest to southeast).
Thus a system of equations defining a face of $P$ gets represented by
a collection of line intervals called the face diagram.\footnote{
Our face diagrams (as well as the diagrams in \cite{VK})
are reflections of the diagrams in \cite{KThesis} in a horizontal line.}
{\em Rows} of a face diagram are defined as the collections of dots
corresponding to the coordinates $\l_{i,j}$ with a fixed $i$,
and {\em columns} are by definition collections of dots with a fixed $j$
(columns look like diagonals in our pictures).

Let $F$ be a regular face of $P$ and $\tilde F$ the corresponding face of $Q$,
so that $F$ is the degeneration of $\tilde F$.
We will often write $[F]$ for the class $[\tilde F]$ of $\tilde F$ in
the polytope ring $R_Q$. Note that, in general, $\pi[F]$ {\em does not belong to} $R_P$.

Every facet of $P$ is regular.
For $i=0,\dots,n-1$ and $j=1,\dots,n-i-1$, let
$\G_{i,j}$ denote the facet of $P$ given by the equation
$\l_{i,j}=\l_{i+1,j}$, where we set $\l_{0,j}=\l_j$.
Similarly, for $i=0,\dots,n-1$ and $j=2,\dots,n-i$,
we let $\G^-_{i,j}$ denote the facet $\l_{i,j}=\l_{i+1,j-1}$.
Clearly, any facet of $P$ is either one of $\G_{i,j}$ or one of
$\G^-_{i,j}$.

The following proposition describes all linear relations between
facets of $P$:

\begin{prop}
\label{P:linrels}
  We have the following linear relations in $R_Q$:
  $$
  [\G_{i,j}]-[\G^-_{i,j}]-[\G_{i-1,j}]+[\G^-_{i-1,j+1}]=0,
  $$
  where the terms must be ignored if their indices are out of range.
  Moreover, all linear relations are generated by these.
\end{prop}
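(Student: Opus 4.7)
The plan is to derive both parts directly from formula (4) applied to the Gelfand--Zetlin polytope. By (4), every linear relation among the classes $[\G]$ in $R_Q$ has the form $r(a):=\sum_\G \xi_\G(a)[\G]=0$ for some $a\in\R^d$, where $\xi_\G$ is the primitive integer outer covector of the facet $\G$ and $d=n(n-1)/2$ is the dimension of the coordinate space indexed by the variables $\l_{i,j}$. Since $Q$ has the same facet normals as $P$, these covectors are read off directly from the Gelfand--Zetlin inequalities.

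To obtain the specific relations, I would evaluate $r$ on the standard basis vector $a=e_{i,j}$ of $\R^d$, for each pair $(i,j)$ with $1\le i\le n-1$ and $1\le j\le n-i$. The coordinate $\l_{i,j}$ appears in exactly the four inequalities sandwiching it, $\l_{i-1,j}\le \l_{i,j}\le \l_{i-1,j+1}$ and $\l_{i+1,j-1}\le \l_{i,j}\le \l_{i+1,j}$ (with the convention $\l_{0,k}=\l_k$), whose boundary facets are $\G_{i-1,j}$, $\G^-_{i-1,j+1}$, $\G^-_{i,j}$, $\G_{i,j}$. Writing each inequality in canonical form $\xi_\G(x)\le H_\G$ gives $\xi_\G(e_{i,j})=-1,+1,-1,+1$ respectively, so (4) becomes precisely $[\G_{i,j}]-[\G^-_{i,j}]-[\G_{i-1,j}]+[\G^-_{i-1,j+1}]=0$. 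The boundary cases $i=n-1$, $j=1$, or $j=n-i$ correspond to $\l_{i,j}$ lying at an edge of the triangular array, where one or two of the four neighbouring facets are absent and the corresponding terms simply drop out; the top-row case $i=1$ is covered uniformly using $\xi_{\G_{0,j}}=-e^*_{1,j}$ and $\xi_{\G^-_{0,j+1}}=e^*_{1,j}$, which give the same signs.

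For the generating statement, the map $r:\R^d\to\{\text{linear relations}\}$ is surjective by the cited consequence of (4), and injective because the $\xi_\G$ span $(\R^d)^*$ (since $P$ is bounded); hence the space of relations has dimension $d=n(n-1)/2$, matching the number of admissible pairs $(i,j)$, so $\{r(e_{i,j})\}$ is a basis. For the analogous statement over $\Z$, a short induction on $i$, starting from $e^*_{1,j}=-\xi_{\G_{0,j}}$ and using $\xi_{\G_{i,j}}=e^*_{i,j}-e^*_{i+1,j}$, shows that the $\xi_\G$ generate $(\Z^d)^*$ over $\Z$; this forces $r(\Z^d)$ to coincide with the lattice of integer relations, so the listed relations generate it over $\Z$ as well. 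The only real obstacle is the careful bookkeeping of signs in $\xi_\G(e_{i,j})$ and of the boundary and top-row cases; once this is in place, the result follows by a dimension count.
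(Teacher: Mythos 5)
Your proof is correct and follows essentially the same route as the paper: evaluate the translation-invariance relation (4) at the standard basis vectors $e_{i,j}$, observe that only the four facets $\G_{i,j}$, $\G^-_{i,j}$, $\G_{i-1,j}$, $\G^-_{i-1,j+1}$ have covectors nonvanishing on $e_{i,j}$, and check the signs. Your dimension count and integrality argument for the second assertion (that these generate all relations) is sound and actually more explicit than the paper, which simply invokes the cited fact that every linear relation has the form (4).
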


We call the relation displayed above the {\em 4-term relation} at $(i,j)$.

\begin{proof}
  Let $e_{i,j}$ be the standard basis in $\R^d$.
  The 4-term relation at $(i,j)$ is exactly the relation of the form
  $$
  \sum_\Gamma \xi_\Gamma(e_{i,j})[\Gamma]=0,
  $$
  where the summation is over all facets of $Q$.
  In fact, there are at most four facets $\G$ of $Q$ such that
  $\xi_\G(e_{i,j})\ne 0$; these are $\G_{i,j}$, $\G_{i-1,j}$,
  $\G^-_{i,j}$ and $\G^-_{i-1,j+1}$.
  It is straightforward to check that the coefficients are as stated.
\end{proof}

\subsection{Kogan faces}
In what follows, we  will mostly consider faces of the
Gelfand--Zetlin polytope given by the equations of the type\footnote{i.e. of type $L$
in notation of \cite{VK}, which is the same as type $A$
equation in \cite{KThesis} (his $\l_{i+j,i}$ is our $\l_{i,j}$).}
$\l_{i,j}=\l_{i+1,j}$ (i.e. the intersections of facets of the form $\G_{i,j}$).
We will call such faces {\em Kogan faces}.
To each Kogan face $F$, we assign the permutation $w(F)$ as follows.
First, assign to each equation $\l_{i,j}=\l_{i+1,j}$ the simple reflection
$s_{i+j}=(i+j,i+j+1)$.
Now compose all simple reflections corresponding to the equations defining $F$ by
going from left to right in each row of the diagram for $F$ and by going from the
bottom row to the top one.
We say that a Kogan face $F$ is {\em reduced} if the decomposition for
$w(F)$ obtained this way is reduced.\footnote{
Note that our definition of $w(F)$ doesn't agree with \cite[2.2.1]{KThesis}:
his $w(F)$ is our $w(F)^{-1}$, but this difference  does not affect the definition
of reduced faces.}
Reduced Kogan faces of the Gelfand--Zetlin polytopes are in bijective
correspondence with reduced
{\em pipe-dreams} (see \cite[2.2.1]{KThesis} for more details).
Note that the permutations associated with
a face and with the corresponding pipe-dream are the same.

All reduced Kogan face diagrams for $n=3$ with the corresponding permutations
are shown in Figure 2.

\begin{figure}
  \includegraphics[width=3cm]{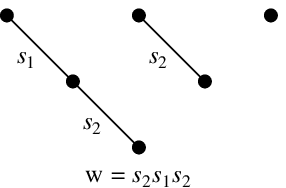}\qquad
  \includegraphics[width=3cm]{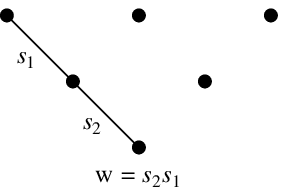}\qquad
  \includegraphics[width=3cm]{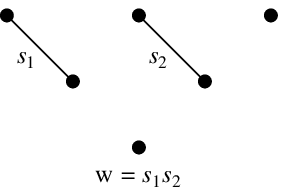}\\
  \bigskip
  \includegraphics[width=3cm]{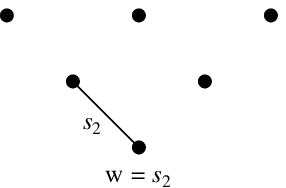}\qquad
  \includegraphics[width=3cm]{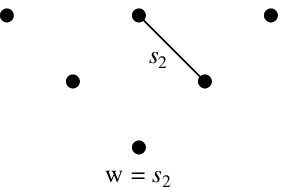}\qquad
  \includegraphics[width=3cm]{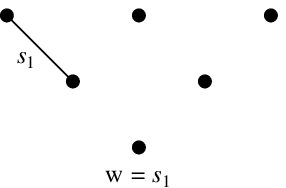}\qquad
  \includegraphics[width=3cm]{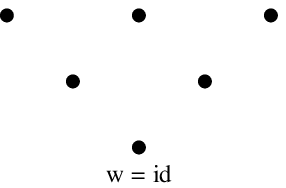}
  \caption{Reduced Kogan face diagrams for the 3-dimensional Gelfand--Zetlin polytope}
\end{figure}

\begin{prop}
  All Kogan faces are regular.
\end{prop}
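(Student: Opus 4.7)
The plan is to apply Proposition \ref{P:reg} to a distinguished simple vertex $v_0$ that lies in every Kogan face.

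First I would exhibit the vertex $v_0 \in P$ whose coordinates are $\l_{i,j} = \l_j$ for all admissible $i,j$ (with the convention $\l_{0,j}=\l_j$). Checking the inequalities $(GZ)$ at this point, one sees that every lower-bound inequality $\l_{i,j}\ge \l_{i+1,j}$ becomes $\l_j = \l_j$, so the facets $\G_{i,j}$ all pass through $v_0$, while every upper-bound inequality $\l_{i,j}\le \l_{i+1,j-1}$ becomes $\l_j < \l_{j-1}$ (strict because $\l$ is strictly dominant), so none of the facets $\G^-_{i,j}$ passes through $v_0$. Since there are exactly $\sum_{i=0}^{n-2}(n-i-1) = n(n-1)/2 = d$ facets of type $\G_{i,j}$ and the defining linear equations $\l_{i,j}=\l_{i+1,j}$ are manifestly independent, $v_0$ is indeed a vertex of $P$, and it is simple because exactly $d = \dim P$ facets meet at it.

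Second, I would observe that by definition every Kogan face $F$ is an intersection of facets of type $\G_{i,j}$. Since $v_0$ lies on each such $\G_{i,j}$, it lies on $F$, so every Kogan face contains the simple vertex $v_0$.

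Third, I would verify that the resolution $Q$ fixed at the end of Subsection 3.1 satisfies the second hypothesis of Proposition \ref{P:reg}. This is immediate from the construction: $Q$ has the same number of facets as $P$, and every supporting hyperplane of $Q$ cutting out a facet is parallel and arbitrarily close to the supporting hyperplane of $P$ cutting out the corresponding facet; hence each facet of $Q$ degenerates to the parallel facet of $P$, which has the same dimension. With both hypotheses in place, Proposition \ref{P:reg} immediately yields that every face of $P$ containing $v_0$ — in particular every Kogan face — is regular.

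There is essentially no technical obstacle here; the only content is locating the correct simple vertex. The naturality of the choice reflects the fact that $v_0$ corresponds to the ``fullest'' Kogan diagram (all $\G_{i,j}$ equations imposed), so every smaller Kogan face is automatically obtained by relaxing some of those equations while still passing through $v_0$.
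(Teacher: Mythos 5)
Your proof is correct and follows exactly the paper's argument: the paper also observes that there is a unique Kogan vertex (your $v_0$, where all $\l_{i,j}=\l_j$), that it is simple and lies in every Kogan face, and then invokes Proposition \ref{P:reg}; you merely spell out the details the paper leaves implicit. (A trivial slip: the $(GZ)$ constraints read $\l_{i,j}\le\l_{i+1,j}\le\l_{i,j+1}$, so your inequality directions are reversed, but this does not affect the argument.)
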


\begin{proof}
  There is a unique Kogan vertex.
This vertex is simple and contained in any other Kogan face.
Now the result follows from Proposition \ref{P:reg}.
\end{proof}

Using the 4-term relations, we can express $[\G_{0,j+1}^-]$ through Kogan facets:
$$
[\G_{0,j+1}^-]=[\G_{0,j}]-[\G_{1,j}]+[\G_{1,j}^-]=
[\G_{0,j}]-[\G_{1,j}]+[\G_{1,j-1}]-[\G_{2,j-1}]+[\G_{2,j-1}^-]=\dots
$$
$$
\dots=\sum_{i=0}^{j-1}[\G_{i,j-i}]-[\G_{i+1,j-i}].
$$
Define the {\em $k$-antidiagonal sum of facets $AD_k$} as the sum of all elements of the form
$[\G_{i,j}]$ with $i+j=k$ being fixed (including the case $i=0$).
Set $\G_j=\G_{0,j}$ and $\G^-_j=\G^-_{0,j}$.
The computation we have just made shows that
$$
[\G_{j+1}]-[\G_{j+1}^-]=AD_{j+1}-AD_j.
$$

\begin{prop}
  \label{P:dla}
 We have the following identities in $R_P$:
 $$
 \frac{\d}{\d\l_1}=\pi(-[\G_1]),\ \
 \frac{\d}{\d\l_2}=\pi([\G^-_2]-[\G_2]),\ \dots\
 \frac{\d}{\d\l_n}=\pi([\G^-_n]).
 $$
\end{prop}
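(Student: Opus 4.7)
The statement is an identity in the module $M_{Q,P}$, once one interprets $\d/\d\l_k\in R_P$ via the canonical embedding $\iota\colon R_P\hookrightarrow M_{Q,P}$. By the definition $\iota([D])=\pi([\sigma_*(D)])$ from the proof of Proposition~\ref{P:Mod}, it suffices to identify the degree-one element $\sigma_*(\d/\d\l_k)\in\Sym^1(\Lambda_Q)=\Lambda_Q$ and then apply $\pi$.

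Under the parametrization $\mu\mapsto P_\mu$, the lattice $\Lambda_P$ is identified with $\Z^n$, and the differential operator $\d/\d\l_k\in\Sym^1(\Lambda_P)$ becomes the standard basis vector $e_k$. The map $\sigma\colon\Lambda_P\to\Lambda_Q$ is induced by the inclusion $V_P\hookrightarrow V_Q$ (virtual polytopes analogous to $P$ are also analogous to $Q$, since the normal fan of $Q$ refines that of $P$). In support-number coordinates on $V_Q$ it is given by
$$
\sigma(e_k)=\sum_{\G}\frac{\d H_\G(P_\mu)}{\d\mu_k}\,\d_\G,
$$
where the sum is over all facets $\G$ of $Q$, so everything reduces to describing how the support numbers of $P_\mu$ depend on $\mu$.

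Reading the relevant inequalities off the Gelfand--Zetlin table, on $P_\mu$ the facet $\G_j=\G_{0,j}$ is the hyperplane $\l_{1,j}=\mu_j$, contained in the half-space $\l_{1,j}\ge\mu_j$, so $H_{\G_j}=-\mu_j$. Similarly $\G^-_j=\G^-_{0,j}$ is $\l_{1,j-1}=\mu_j$, contained in $\l_{1,j-1}\le\mu_j$, so $H_{\G^-_j}=\mu_j$. Every lower facet $\G_{i,j}$ or $\G^-_{i,j}$ with $i\ge 1$ is defined by an equation between coordinates $\l_{r,s}$ with $r\ge 1$, so its support number is identically $0$, independent of $\mu$. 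Hence $\d H_\G/\d\mu_k$ equals $-1$ for $\G=\G_k$ (when $k\le n-1$), equals $+1$ for $\G=\G^-_k$ (when $k\ge 2$), and vanishes otherwise. Thus in $\Lambda_Q$,
$$
\sigma(e_k)=-\d_{\G_k}+\d_{\G^-_k},
$$
with the convention that a nonexistent term ($\d_{\G^-_1}$ for $k=1$, or $\d_{\G_n}$ for $k=n$) is omitted.

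Applying $\pi$ to this identity in $R_Q$ yields the three claimed formulas for $k=1$, $2\le k\le n-1$, and $k=n$ respectively. The only real source of potential error is the sign convention for $\xi_\G$ (outward normal, $\xi_\G(x)\le H_\G$ on $P$); this can be cross-checked against the three-dimensional example earlier in Section~\ref{s.polytopes}, where precisely the formulas $\tilde\d_a=-[x=a]$, $\tilde\d_b=-[y=b]+[x=b]$, $\tilde\d_c=[y=c]$ already appear. I do not anticipate any genuine obstacle; the proposition is essentially a bookkeeping exercise, tracing through the definition of $\iota$ and reading off the support numbers of the Gelfand--Zetlin polytope.
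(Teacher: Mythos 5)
Your argument is correct and is essentially the paper's own proof: both expand the image of $\frac{\d}{\d\l_k}$ in $\Lambda_Q$ via the chain rule with respect to the support numbers and observe that only $H_{\G_k}=-\l_k$ and $H_{\G^-_k}=\l_k$ depend on $\l_k$, all other support numbers being independent of $\l$. You merely make explicit a point the paper leaves implicit (that facets $\G_{i,j}$, $\G^-_{i,j}$ with $i\ge 1$ contribute nothing), and your sign conventions check out.
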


\begin{proof}
 Let $\d_j$ be the image of the vector $\frac{\d}{\d\l_j}$ under
 the natural inclusion $\Lambda_P\to\Lambda_Q$.
 Denote by $H_j$ and $H^-_j$ the support numbers corresponding
 to facets $\G_j$ and $\G^-_j$, respectively.
 Thus $H_j$ and $H_j^-$ are linear functionals on $\Lambda_Q$.
 By the chain rule, we have
 $$
 \d_j=\sum_{k=1}^{n-1} H_k(\d_j)[\G_j]+\sum_{k=2}^n H^-_k(\d_j)[\G^-_j],
 $$
 in $R_Q$ since $[\G_j]=\d/\d H_j$ and similarly for $[\G^-_j]$.
 It suffices to note that $H_k(\d_j)=-\delta_{kj}$ and
 $H^-_k(\d_j)=\delta_{kj}$, where $\delta_{kj}$ is the Kronecker delta.
\end{proof}

\section{Schubert cycles and faces}\label{s.cycles}
\subsection{Schubert cycles}\label{ss.Schubert}
For the rest of the paper, we set $G=GL_n(\C)$.
Let $B$ and $B^-$, respectively, be the subgroups of upper-triangular and
lower-triangular matrices in $G$.
The Weyl group of $G$ is identified with the symmetric group $S_n$:
a permutation $w\in S_n$ corresponds to the element of $G$ acting
on the standard basis vectors $e_i$ by the formula $e_i\mapsto e_{w(i)}$.
For each $w\in S_n$, we define the {\em Schubert variety} $X^w$ to be the closure of
the $B^-$--orbit of $w$ in the flag variety $X=G/B$. It is easy to check that the
length $l(w)$ of $w$ is equal to the codimension of $X^w$ in $X$. The class $[X^w]$
of $X^w$ in $CH^{l(w)}(X)$ is called the {\em Schubert cycle} corresponding to the
permutation $w$.
\begin{remark}
\label{r.notation1}
Note that notation in \cite{KM} is different from ours.
Namely, they consider the flag variety $B^{-}\setminus G$.
Under the isomorphism $G/B\to B^{-}\setminus G$ that sends $gB$ to
$w_0g^{-1}w_0^{-1}B^{-}$ our Schubert variety $X^w$ gets mapped to the
Schubert variety $X_{w_0^{-1}ww_0}$ in notation of \cite[\S4]{KM}.
Here $w_0\in S_n$ denotes the longest permutation $i\mapsto n-i+1$.
\end{remark}

\subsection{Schubert polynomials}
We now recall the notion of a {\em Schubert polynomial} \cite{BGG,LS}.
For every elementary transposition $s_i=(i,i+1)$, define the corresponding
divided difference operator (acting on polynomials in $x_1$, $x_2$, $\dots$)
by the formula
$$
A_i(f)=\frac{f-s_i(f)}{x_i-x_{i+1}},
$$
where $s_i(f)$ is the polynomial $f$ with variables $x_i$ and $x_{i+1}$ interchanged.
For a permutation $w$, consider a reduced (i.e. a shortest) decomposition
$w^{-1}w_0=s_{i_1}\dots s_{i_k}$ of $w^{-1}w_0$
into a product of elementary transpositions.
The Schubert polynomial $\Sf_w$ is defined by the formula
$$
\Sf_w(x_1,x_2,\dots)=A_{i_1}\dots A_{i_k}(x_1^{n-1}x_2^{n-2}\dots x_{n-1})
$$

\begin{thm}[\cite{BGG}]
\label{T:BGG}
 The class $[X^w]$ of the Schubert variety $X^w$ in $CH(X)$ is
 equal to $\Sf_w(x_1,x_2,\dots)$, where $x_i=-c_1(\Lc_i)$ is the
 negative first Chern class of the tautological quotient line bundle $\Lc_i$.
 Under our identification $CH(X)=R_P$, we have
 $$
 [X^w]=\Sf_w\left(-\frac{\d}{\d\l_1},\dots,-\frac{\d}{\d\l_n}\right).
 $$
\end{thm}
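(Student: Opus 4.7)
The plan is to deduce this theorem as a direct consequence of the classical Bernstein--Gelfand--Gelfand theorem \cite{BGG} combined with Kaveh's isomorphism (Theorem \ref{T:iso}). The first identity $[X^w]=\Sf_w(x_1,\dots,x_n)$ in $CH^*(X)$ is precisely the BGG theorem, so I would quote it. The second identity then follows by transport of structure through Theorem \ref{T:iso}.

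If one wishes to reprove the BGG identity from scratch, the standard inductive strategy proceeds in two steps. First, identify the class of a point $[X^{w_0}]$ with the top monomial $x_1^{n-1}x_2^{n-2}\cdots x_{n-1}=\Sf_{w_0}$: this follows from the splitting principle applied to the tangent bundle $TX$, whose successive quotients in its canonical filtration are the line bundles $\Lc_i^\vee\otimes\Lc_j$ for $i<j$, so that $c_{\rm top}(TX)$ equals (up to the appropriate sign) the Vandermonde-like expression $\prod_{i<j}(x_j-x_i)$, whose leading term is this monomial. Second, realize the divided difference operator $A_i$ geometrically as fiber integration along the $\P^1$-bundle $X\to X/P_i$, where $P_i$ is the minimal parabolic containing $B$ and the root subgroup for $s_i$; a direct calculation then gives the recursion $A_i[X^w]=[X^{ws_i}]$ when $\ell(ws_i)<\ell(w)$ and $0$ otherwise. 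Induction on $\ell(w_0)-\ell(w)$, applying divided differences corresponding to any reduced decomposition of $w^{-1}w_0$, yields the first identity by matching it term-by-term with the inductive definition of $\Sf_w$.

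For the passage to $R_P$, I would invoke Theorem \ref{T:iso}: Kaveh's isomorphism sends the class of $\d/\d\l_i$ in $R_P$ to $c_1(\Lc_i)$ in $CH^*(X)$, so $x_i=-c_1(\Lc_i)$ corresponds to $-\d/\d\l_i$. Substituting into $\Sf_w(x_1,\dots,x_n)$ gives the stated formula. The main (minor) obstacle throughout is bookkeeping around conventions: the sign $x_i=-c_1(\Lc_i)$, left versus right multiplication by $w_0$ in the definition of $\Sf_w$, and the orientation of the divided-difference recursion. Once these are pinned down consistently so that $[X^{w_0}]=\Sf_{w_0}$, the rest of the argument is formal.
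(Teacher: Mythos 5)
Your proposal is correct and matches the paper's treatment: Theorem \ref{T:BGG} is stated there as an imported result of Bernstein--Gelfand--Gelfand with no proof given, the first identity being exactly the classical theorem and the second following formally by substituting $x_i=-\d/\d\l_i$ via the identification of Theorem \ref{T:iso}. Your optional sketch of the BGG argument (top Chern class of $TX$ for $[X^{w_0}]$, push-pull along $X\to X/P_i$ realizing $A_i$, induction on a reduced word for $w^{-1}w_0$) is the standard route and consistent with the paper's conventions.
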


We now recall the Fomin--Kirillov theorem \cite{FomKir}.
Assign to each face $F$ the monomial
$x(F)$ in $x_1$,\ldots, $x_{n-1}$ by assigning $x_j$ to each equation
$\l_{i,j}=\l_{i+1,j}$ defining $F$ and then multiplying them all
(here the order, of course, does not matter).
The Fomin--Kirillov theorem states that the Schubert polynomial $\mathfrak S_w$ of the
Schubert cycle $[X^w]$ is equal to
$$
\sum_{w(F)=w}x(F),
$$
where the sum is only taken over reduced Kogan faces.

\subsection{Representation of Schubert cycles by faces}
The polytope ring provides a natural setting,
where Schubert cycles can be immediately identified
with linear combinations of faces sidestepping the use of Schubert polynomials.
The following theorem is a direct analog of the Fomin--Kirillov theorem:
it shows that each Schubert cycle can be represented
by the sum of faces in exactly the same way  as the respective
Schubert polynomial can be represented by the sum of monomials.

\begin{thm}
\label{t.FK}
The  Schubert cycle
$[X^w]$ regarded as an element of the Gelfand--Zetlin polytope ring can be represented
by the following linear combination of faces:
$$[X^w]=\pi\left(\sum_{w(F)=w}[F]\right),$$
where the sum is only taken  over reduced Kogan faces
(all these faces are regular).
\end{thm}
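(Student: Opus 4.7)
The plan is to apply the Fomin--Kirillov theorem inside $R_Q$, after lifting the Chern classes $-\d/\d\l_j$ from $R_P$ via Proposition~\ref{P:dla}, and then to rewrite the resulting expression as a sum of face classes using the 4-term relations of Proposition~\ref{P:linrels}.

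Set $\tilde x_j:=[\G_j]-[\G_j^-]\in R_Q$ (with the conventions $[\G_1^-]=[\G_n]=0$). By Proposition~\ref{P:dla}, $\pi(\tilde x_j)=\iota(-\d/\d\l_j)$, and iterating Proposition~\ref{P:Mod} gives $\pi(p(\tilde x_1,\dots,\tilde x_{n-1}))=\iota(p(-\d/\d\l_1,\dots,-\d/\d\l_{n-1}))$ for every polynomial $p$ in $n-1$ commuting variables. Taking $p=\Sf_w$ and invoking Theorem~\ref{T:BGG}, we get $[X^w]=\pi(\Sf_w(\tilde x_1,\dots,\tilde x_{n-1}))$. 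Since the Fomin--Kirillov identity $\Sf_w=\sum_{w(F)=w}x(F)$ is a polynomial identity in commuting variables, it specialises in $R_Q$ to
$$
\Sf_w(\tilde x_1,\dots,\tilde x_{n-1})\;=\;\sum_{F\colon w(F)=w}\tilde x(F),\qquad \tilde x(F):=\prod_{(i,j)\in F}\tilde x_j,
$$
the sum being over reduced Kogan faces $F$. Consequently $[X^w]=\pi\bigl(\sum_F\tilde x(F)\bigr)$, and Theorem~\ref{t.FK} reduces to the identity
$$
\pi\Bigl(\sum_{F\colon w(F)=w}\tilde x(F)\Bigr)\;=\;\pi\Bigl(\sum_{F\colon w(F)=w}[F]\Bigr).
$$

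To prove the last displayed identity, I would aim for the stronger statement $\sum_F\tilde x(F)=\sum_F[F]$ inside $R_Q$. The left-hand side is expressed in terms of top-row facets $[\G_{0,j}]$, $[\G_{0,j}^-]$, whereas the right-hand side uses the deeper Kogan facets $[\G_{i,j}]$; the 4-term relations of Proposition~\ref{P:linrels} provide the bridge. After expanding each product $\prod_{(i,j)\in F}([\G_j]-[\G_j^-])$, many cross terms vanish because facets that are disjoint in $Q$ annihilate each other in $R_Q$ (this is built into the ideal $A_Q$), and the remaining terms must be matched with the face classes of the appropriate reduced Kogan faces. This is the same mechanism illustrated at the end of Section~\ref{s.polytopes}, where $\Sf_{s_2s_1}=\pi[x=z=a]$ is obtained from the identity $[\G_{0,1}]^2=[\G_{0,1}]\cdot[\G_{1,1}]$ in $R_Q$.

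The main obstacle is precisely this rearrangement. The identity is not term-by-term: already for $n=3$ and $w=s_2$ one has $\tilde x(\G_{1,1})=[\G_{0,1}]\ne[\G_{1,1}]$, so the correct matching appears only after summing over all reduced Kogan $F$ with $w(F)=w$ and using the 4-term relation $[\G_{1,1}]=[\G_{0,1}]-[\G_{0,2}^-]$. A clean organisation of this matching for general $w$ will likely require either an induction on the length $\ell(w)$, in which divided difference operators correspond on the face side to the mitosis of Knutson--Miller (geometrised as \emph{paramitosis} in Section~\ref{s.keylemma}), or an application of Lemma~\ref{L:Poincare}, which reduces the desired equality in $M_{Q,P}$ to checking, for each homogeneous $\g\in R_Q$ of complementary degree with $\pi(\g)\in R_P$, that $\pi\bigl(\g\cdot\sum_F\tilde x(F)\bigr)=\pi\bigl(\g\cdot\sum_F[F]\bigr)$.
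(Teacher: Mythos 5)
Your reduction is fine as far as it goes: by Proposition~\ref{P:dla} and (iterated) Proposition~\ref{P:Mod}, together with Theorem~\ref{T:BGG}, one does get $[X^w]=\pi\bigl(\Sf_w(\tilde x_1,\dots,\tilde x_{n-1})\bigr)$, and Fomin--Kirillov rewrites this as $\pi\bigl(\sum_{w(F)=w}\tilde x(F)\bigr)$. But at that point you have only restated the problem. The entire content of Theorem~\ref{t.FK} is the matching $\pi\bigl(\sum_F\tilde x(F)\bigr)=\pi\bigl(\sum_F[F]\bigr)$, and your proposal leaves it as ``the remaining terms must be matched,'' with two possible strategies sketched but not executed. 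This is a genuine gap, not a routine verification: the paper states explicitly that Theorem~\ref{t.FK} ``can not be formally deduced'' from the Fomin--Kirillov theorem, precisely because expanding the products $\prod_j([\G_j]-[\G_j^-])^{m_j}$ and reassembling them into classes of reduced Kogan faces via the 4-term relations has no known direct combinatorial organisation for general $w$. Your low-degree checks ($\Sf_{s_2}$, $\Sf_{s_2s_1}$) work, but they do not indicate how to control the cross terms in general, where a single column contributes $[\G_j]^{m_j}$ with $m_j>1$ and the relevant facets of $Q$ are not disjoint.

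The paper's actual route is entirely different and supplies exactly the missing input for the Lemma~\ref{L:Poincare} strategy you mention in your last sentence. It first proves Theorem~\ref{t.Demazure} (Demazure characters equal characters of unions of reduced Kogan faces), via the Demazure character formula and the mitosis/paramitosis machinery of Section~\ref{s.keylemma}; this yields Corollary~\ref{c.Ehrhart} (the Hilbert function of $X^w$ in the embedding given by $\Lc_\l$ is the Ehrhart polynomial of $\bigcup_{w(F)=w}F_\l$), hence Theorem~\ref{t.deg}: $D_w=\sum_{w(F_\l)=w}\Volume(F_\l)$. Then, writing $L_\l$ for the class of $P_\l$ in $R_Q$, one computes both $[X^w]\pi(L_\l)^{d-l}=(\deg_\l X^w)[pt]$ and $\pi\bigl(\sum_F[F]L_\l^{d-l}\bigr)=(d-l)!\sum_F\Volume(F_\l)\,[pt]$, which agree by Theorem~\ref{t.deg}; since the powers $\pi(L_\l)^{d-l}$ span $R_P^{d-l}$, Lemma~\ref{L:Poincare} gives the equality in $M_{Q,P}$. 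If you want to salvage your approach, you must either carry out the $R_Q$-rearrangement in full generality (which would be a new result) or prove the volume identity $D_w=\sum_{w(F_\l)=w}\Volume(F_\l)$ by some means, since that is the input Lemma~\ref{L:Poincare} actually needs.
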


The proof of this theorem will be given in Subsection~\ref{ss.degrees}.
It uses combinatorics and geometry of the Gelfand--Zetlin polytope together with the Demazure character formula.

Despite the similarity between this theorem and the Fomin--Kirillov theorem,
the former can not be formally deduced from the latter.

Note that a Schubert cycle might have a simpler representation by sums of faces than
the one given by this theorem (see Example \ref{e.facets}).

\begin{example}
\label{e.facets}
Using Proposition \ref{P:dla} and Theorem \ref{T:BGG}, we can express the Schubert
divisors $[X^{s_i}]$ through the elements of the polytope ring corresponding to
facets of $P$.
First, we have
$$
[X^{s_{i_0}}]=\Sf_{s_{i_0}}\left(-\frac{\d}{\d\l_1},-\frac{\d}{\d\l_2},\dots\right)=
\pi\left(\sum_{j=1}^{i_0}[\G_{j}]-[\G^-_{j}]\right),
$$
where we drop all terms, whose indices are out of range.
As we have seen, the element $[\G_{j}]-[\G^-_{j}]$ equals to $AD_j-AD_{j-1}$.
It follows that
$$
[X^{s_{i_0}}]=\pi(AD_{i_0})=\pi\left(\sum_{j=1}^{i_0}[\G_{i_0-j,j}]\right).
$$
We obtain a representation of $[X^{s_{i_0}}]$ as a sum of $i_0$ facets.
This representation coincides with the one given in Theorem \ref{t.FK}.
Note, however, that $[X^{s_{n-1}}]$ can be represented by a single facet,
namely, we have
$$
[X^{s_{n-1}}]=-\sum_{i=1}^{n-1}\frac{\d}{\d\l_i}=\frac{\d}{\d\l_n}=\pi[\G^-_n].
$$
We have used the equality $\sum_{i=1}^{n}{\d}/{\d\l_i}=0$ in $R_Q$ because the
volume polynomial is translation invariant, in particular, it does not change
if we add the same number to all $\l_i$.
\end{example}

Theorem \ref{t.FK} together with relations in the polytope ring $R_P$
implies the following dual presentation of Schubert cycles by faces.
Define {\em dual Kogan faces} of the Gelfand--Zetlin polytope to be the faces given by
the equations of the type\footnote{
i.e. of type $R$
in notation of \cite{VK}, which is the same as type $B$ equation in \cite{KThesis}
}
$\l_{i,j}=\l_{i+1,j-1}$ (i.e. the intersections of facets of the form $\G_{i,j}^-$).
In other words, dual Kogan faces are mirror images of Kogan faces in a vertical line.
To each dual Kogan face $F^*$ we can again assign a permutation $w(F^*)$. Namely,
assign to each equation $\l_{i,j}=\l_{i+1,j-1}$ the simple reflection $s_{n-j+1}$ and
compose these reflections by going from the bottom row to the top one and from
right to left in each row. Note that the permutation $w(F^*)$ is the same as the
permutation $w(F)$ for the Kogan face $F$ obtained as the mirror image of $F^*$
in a vertical line (that is, each equation $\l_{i,j}=\l_{i+1,j-1}$ is replaced by $\l_{i,n-i-j+1}=\l_{i+1,n-i-j+1}$).

\begin{cor}
\label{c.FK}
The  Schubert cycle
$[X^w]$ regarded as an element of the Gelfand--Zetlin polytope ring can be represented
by the following linear combination of faces:
$$
[X^w]=\pi\left(\sum_{w(F^*)=w_0ww_0^{-1}}[F^*]\right),
$$
where the sum is only taken over reduced dual Kogan faces.
\end{cor}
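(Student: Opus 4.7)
The plan is to construct a ring involution $\sigma$ of $R_P$ induced by the vertical reflection of the Gelfand--Zetlin pattern, and to deduce the corollary by applying $\sigma$ to the identity in Theorem~\ref{t.FK}. Consider the linear substitution $\l_j\mapsto -\l_{n-j+1}$ on parameters, extended by $\l_{i,j}\mapsto -\l_{i,n-i-j+1}$ on interior coordinates. Each factor $\l_j-\l_i$ of the closed-form volume polynomial $vol_P(\l)=C\prod_{i<j}(\l_j-\l_i)$ (mentioned after Theorem~\ref{T:iso}) is sent to $\l_{n-i+1}-\l_{n-j+1}$; since the map $(i,j)\mapsto(n-j+1,n-i+1)$ is a bijection on $\{(i,j):i<j\}$, the polynomial $vol_P$ is preserved. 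Hence the substitution descends to a ring automorphism $\sigma$ of $R_P$ with $\sigma(\d/\d\l_j)=-\d/\d\l_{n-j+1}$ on generators. Choosing the resolution $Q$ symmetrically under the vertical reflection (which entails no loss of generality) lifts $\sigma$ to a compatible involution of $M_{Q,P}$ commuting with the projection $\pi$.

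Next I would analyze the action of $\sigma$ on facets of $Q$. On support numbers the involution acts as a pure permutation: the sign change on parameters and the sign change on coordinates cancel, so $H_{\G_{i,j}}$ is interchanged with $H_{\G^-_{i,n-i-j+1}}$ (and analogously for facets with $i\ge 1$). Consequently $\sigma[F]=[F^*]$ in $R_Q$ for every face $F$, where $F^*$ denotes the vertical mirror of $F$. As noted in the paragraph preceding the corollary, the labelling rules for $w(F)$ and $w(F^*)$ are calibrated precisely so that this bijection between reduced Kogan and reduced dual Kogan faces preserves the associated permutation: $w(F^*)=w(F)$.

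The main obstacle is to identify the action of $\sigma$ on Schubert cycles as $\sigma[X^w]=[X^{w_0ww_0^{-1}}]$. By Theorem~\ref{T:BGG},
$$
\sigma[X^w]=\Sf_w\bigl(\d/\d\l_n,\ldots,\d/\d\l_1\bigr)=\Sf_w(-x_n,\ldots,-x_1),
$$
so the identity reduces to $\Sf_w(-x_n,\ldots,-x_1)=\Sf_{w_0ww_0^{-1}}(x_1,\ldots,x_n)$ in $CH^*(X)\simeq R_P$. This is classical: the involution of the flag variety induced by $g\mapsto w_0(g^T)^{-1}w_0$ preserves $B$, sends $X^w$ to $X^{w_0ww_0^{-1}}$, and pulls $\Lc_i$ back to $\Lc_{n-i+1}^{\vee}$, so its action on $CH^*(X)$ coincides with $\sigma$. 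Alternatively, the identity may be verified inductively on $\ell(w)$ by combining the divided-difference recurrence for Schubert polynomials with the relation $w_0 s_i w_0=s_{n-i}$.

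Granting these three ingredients, apply $\sigma$ to Theorem~\ref{t.FK} written for the permutation $w_0ww_0^{-1}$: since $\sigma$ commutes with $\pi$ and sends each Kogan face class $[F]$ to $[F^*]$,
$$
[X^w]=\sigma[X^{w_0ww_0^{-1}}]=\pi\!\left(\sum_{w(F)=w_0ww_0^{-1}}[F^*]\right),
$$
the sum ranging over reduced Kogan faces $F$. Re-indexing via the bijection $F\leftrightarrow F^*$ with $w(F^*)=w(F)$ rewrites this as $\pi\bigl(\sum_{w(F^*)=w_0ww_0^{-1}}[F^*]\bigr)$ summed over reduced dual Kogan faces, which is exactly the claim.
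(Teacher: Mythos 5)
Your proposal is correct and follows essentially the same route as the paper: both construct the vertical-reflection automorphism $\l_{i,j}\mapsto-\l_{i,n-i-j+1}$ of $R_P$ (lifted to a suitable resolution $Q$), observe that it exchanges reduced Kogan faces with reduced dual Kogan faces preserving the associated permutation, identify it with the involution $[X^w]\mapsto[X^{w_0ww_0^{-1}}]$ of $CH^*(X)$ induced by passing to the flag of orthogonal complements, and then apply Theorem~\ref{t.FK}. The only cosmetic difference is in verifying that identification: the paper checks it on Schubert divisors and uses that they generate the ring multiplicatively, while you compute the action on the Chern-class generators and invoke the classical transformation identity for Schubert classes.
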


\begin{proof}
Consider the linear automorphism of $\R^d$ that takes a point with coordinates
$\l_{i,j}$ to the point with coordinates $-\l_{i,n-i-j+1}$.
The automorphism takes a Gelfand--Zetlin polytope $P_\l$
to the Gelfand--Zetlin polytope $P_{-w_0\l}$, where $w_0(\l_1,\ldots,\l_n)=(\l_n,\ldots,\l_1)$.
Thus it induces an automorphism  of the space $V_P$ preserving the lattice $\Lambda_P$
and hence an automorphism $A$ of $R_P$.
Choose a resolution $Q$ so that the automorphism $A$ extends to $R_Q$.
It is clear that the extended automorphism takes the element $\pi[F]$ corresponding to
a regular face $F$ of $P$ to the element $\pi[F^*]$, where the face diagram of $F^*$
is obtained from the face diagram of $F$ by the mirror reflection in a vertical line.
It now suffices to prove that the automorphism $A$ of $R_P$ coincides with the automorphism
of $CH^*(X)$ that sends a Schubert cycle $[X^w]$ to $[X^{w_0ww_0^{-1}}]$. (The latter
automorphism is induced by the automorphism of $X$ that sends a complete flag to the
flag of orthogonal complements.)
Indeed, this is easy to verify for Schubert divisors as in Example \ref{e.facets}
(we basically need to repeat the same computation with dual Kogan faces
instead of Kogan faces).
The general case now follows since Schubert divisors are multiplicative generators of
the cohomology ring of $X$.
\end{proof}

Note that any Kogan face intersects any dual Kogan face transversally.
Hence, we can represent the cycles given by the Richardson varieties as sums of faces.

\begin{cor}
\label{c.Richardson}
The product of any two Schubert cycles $[X^w]$ and $[X^u]$ can be represented as the
sum of the following faces:
$$
[X^w]\cdot [X^u]=
\pi\left(\sum_{
               \genfrac{}{}{0pt}{2}{w(F)=w}{w(F^*)=w_0uw^{-1}_0}
               }
[F\cap F^*]\right),
$$
where $F$ and $F^*$ run over reduced Kogan and dual Kogan faces, respectively.
\end{cor}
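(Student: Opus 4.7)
The plan is to combine the two representations of Schubert cycles as sums of faces provided by Theorem \ref{t.FK} and Corollary \ref{c.FK}, and then invoke the general product formula in the polytope ring that was derived from Proposition \ref{P:Mod}. Concretely, I would start by writing
$$
[X^w]=\pi\Bigl(\sum_{w(F)=w}[F]\Bigr),\qquad [X^u]=\pi\Bigl(\sum_{w(F^*)=w_0uw_0^{-1}}[F^*]\Bigr),
$$
where the first sum runs over reduced Kogan faces and the second over reduced dual Kogan faces. By the proposition of Section~\ref{s.GZ} all Kogan faces are regular, and the same argument (symmetric under the vertical reflection used in the proof of Corollary \ref{c.FK}) shows that all dual Kogan faces are regular as well. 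Hence each $F$ and each $F^*$ has a unique lift to a face of the resolution $Q$.

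Next I would apply the recipe recorded at the end of Section \ref{s.polytopes}: if two classes in $R_P$ admit representations by sums of faces $F$ and $F'$ of $Q$ such that each $F$ is transverse to each $F'$, then the product is represented by $\sum [F\cap F']$. The key input here is the transversality assertion stated just before the corollary, namely that every Kogan face is transverse to every dual Kogan face. Granted this, bilinearity and the relation $[F]\cdot[F^*]=[F\cap F^*]$ in $R_Q$ for transverse regular faces give
$$
[X^w]\cdot[X^u]=\pi\Bigl(\sum_{w(F)=w,\ w(F^*)=w_0uw_0^{-1}}[F\cap F^*]\Bigr),
$$
which is precisely the asserted formula.

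The only point that requires genuine verification is the transversality claim: a Kogan face is cut out by equations of the form $\lambda_{i,j}=\lambda_{i+1,j}$, whereas a dual Kogan face is cut out by equations $\lambda_{i,j}=\lambda_{i+1,j-1}$. One needs to check that the union of any such set of equations defines a face of $P$ of the expected codimension, i.e.\ that the linear forms $\lambda_{i,j}-\lambda_{i+1,j}$ and $\lambda_{i,j}-\lambda_{i+1,j-1}$ appearing in the two face diagrams remain linearly independent on $\R^d$, and moreover that the intersection really is non-empty as a face. Independence is visible directly from the fact that the two families involve disjoint pairs of adjacent coordinates in the Gelfand--Zetlin triangle; the non-emptiness follows because the Kogan vertex and the dual Kogan vertex share enough common coordinate values that the combined system has a solution in $P$.

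The main obstacle is the bookkeeping needed to promote transversality in $P$ to transversality in the resolution $Q$: since $F$ and $F^*$ are both regular, their unique lifts $\tilde F$ and $\tilde F^*$ have the same dimensions as $F$ and $F^*$, and $\tilde F\cap\tilde F^*$ degenerates to $F\cap F^*$, which forces the intersection in $Q$ to have the expected codimension. Once this compatibility is settled, the corollary follows purely formally from the two face decompositions and the multiplication rule in $M_{Q,P}$.
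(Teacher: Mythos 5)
Your overall route is exactly the paper's: the paper derives Corollary~\ref{c.Richardson} in one line from Theorem~\ref{t.FK}, Corollary~\ref{c.FK}, the multiplication rule of Proposition~\ref{P:Mod}, and the remark that Kogan faces meet dual Kogan faces transversally. Two of your supporting claims, however, are wrong as stated. First, ``disjoint pairs of adjacent coordinates'' does \emph{not} imply linear independence of the defining forms: the four edges $\l_{0,2}=\l_{1,2}$, $\l_{1,1}=\l_{2,1}$ (Kogan) and $\l_{0,2}=\l_{1,1}$, $\l_{1,2}=\l_{2,1}$ (dual Kogan) involve four distinct pairs, yet
$$
(\l_{0,2}-\l_{1,2})+(\l_{1,2}-\l_{2,1})-(\l_{1,1}-\l_{2,1})-(\l_{0,2}-\l_{1,1})=0,
$$
so mixed Kogan/dual-Kogan $4$-cycles produce dependent systems; already for $n=3$ the (non-reduced) Kogan face $\{y=b,\ z=x\}$ meets the dual Kogan face $\{x=b,\ z=y\}$ in a vertex, of codimension $3$ rather than $4$. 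Second, non-emptiness of $F\cap F^*$ is neither needed nor always true: $\{x=a\}$ and $\{x=b\}$ are a reduced Kogan and a reduced dual Kogan facet with empty intersection, and the corresponding term simply contributes $0$. The argument that actually does the work is the one you sketch only in your last paragraph: all computations take place in the resolution $Q$, which is \emph{simple}, and the facets $\G_{i,j}$ lifting Kogan equations are distinct from the facets $\G^-_{i,j}$ lifting dual Kogan equations; in a simple polytope an intersection of pairwise distinct facets is automatically either empty or of the expected codimension, so $[\tilde F]\cdot[\tilde F^*]=[\tilde F\cap\tilde F^*]$ holds with no independence check in $P$ at all (one only has to identify $\tilde F\cap\tilde F^*$ with the lift of $F\cap F^*$ when the latter is nonempty, e.g.\ via regularity as in Proposition~\ref{P:reg}). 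I recommend dropping the linear-independence and non-emptiness discussion and making the $Q$-level argument the centerpiece.
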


\section{Demazure characters}\label{s.Demazure}

\subsection{Characters} For each $\l=(\l_1,\ldots,\l_n)$,
consider the affine hyperplane $\R^{n-1}\subset\R^n$ with coordinates $y_1$, \ldots,
$y_n$ given by the equation $y_1+\ldots+y_n+u_0=0$, where $u_0=\lambda_1+\dots+\lambda_n$.
Choose coordinates $u_1$, \ldots, $u_{n-1}$ in $\R^{n-1}$ such that $y_i=u_i-u_{i-1}$
for $i=1,\ldots, n-1$. Consider the following linear map $p:\R^d\to\R^{n-1}$ from the
space $\R^d$ with coordinates $\l_{i,j}$ to the hyperplane $\R^{n-1}\subset\R^n$:
$$
u_i=\sum_{j=1}^{n-i}\l_{i,j}.
$$
In other terms, if we arrange the coordinates $\l_{i,j}$ into
a triangular table as in $(GZ)$, then $u_i$ is the sum of
all elements in the $i$-th row.
In what follows, we identify $\R^n$ with the real span of
the weight lattice $\Lambda$ of $G$ so that the $i$-th basis vector in $\R^n$ corresponds to
the weight given by the $i$-th entry of the diagonal torus in $G$. Then the hyperplane
$\R^{n-1}$ is the parallel translate of the hyperplane spanned by the roots of $G$.
It is easy to check that the image of the Gelfand--Zetlin polytope $P_\l\subset \R^d$
under the map $p$ is the weight polytope of the representation $V_\l$.

Let $S$ be a subset of the Gelfand--Zetlin polytope $P_\l$ (in what follows $S$
will be a face or a union of faces).
Define the {\em character} $\chi_S$ of $S$ as the sum of formal exponentials
$e^{p(z)}$ over all integer points $z\in S$, that is,
$$
\chi(S):=\sum_{z\in S\cap\Z^d}e^{p(z)}.
$$
The formal exponentials $e^u$, $u\in\Z^n$, generate the group
algebra of  $\Lambda$.
Thus the character takes values in this group algebra.

Consider the linear operators $s_i:\R^{n-1}\to\R^{n-1}$
such that the point $s_i(u_1,\dots,u_{n-1})$ differs from
the point $(u_1,\dots,u_{n-1})$ at most in the $i$-th coordinate,
and the $i$-th coordinate of the point $s_i(u_1,\dots,u_{n-1})$
is equal to $u_{i-1}+u_{i+1}-u_i$, where $u_n=0$.
It is known and easy to verify that the operators $s_i$ are induced by the orthogonal
reflections of $\R^{n}$ (given by the simple roots) and that they generate an
action of the symmetric group $S_n$ on $\R^{n-1}$ such that the reflection $s_i$
corresponds to the elementary transposition $s_i=(i,i+1)$ (we use the same notation
for a reflection and a transposition, which is a standard practice when dealing with
group actions). We also define the action of $s_i$ on the group algebra of the weight
lattice by setting $s_i(e^u):=e^{s_i(u)}$.

In what follows, we identify $\R^{n-1}$ with the real vector
space spanned by the roots of $G$, and $s_i$ with the reflections corresponding
to simple roots. The simple roots correspond to the standard basis vectors in $\R^{n-1}$,
i.e. the only nonzero coordinate of the simple root $\a_i$ is $u_i$,
and this coordinate is equal to one.

Let $V^-_{\l, w}$ be the Demazure $B^-$--module defined as the dual space to the space of global sections $H^0(X^w,\Lc_\l|_{X^w})$,
where $\Lc_\l:=\Lc_1^{\otimes\l_1}\otimes\ldots\otimes\Lc_n^{\otimes\l_n}$ is the very ample line bundle on $X$ corresponding
to a strictly dominant weight $\l$.
Note that by the Borel--Weil--Bott theorem $V^-_{\l,e}$ is isomorphic to the irreducible representation $V_\l$ of $G$ with the highest weight $\l$.
Choose a basis of weight vectors in $V^-_{\l,w}$.
Recall that the {\em Demazure character} $\chi^w(\l)$
of $V^-_{\l,w}$ is the sum over all weight vectors in the basis of
the exponentials of the corresponding weights, or equivalently,
$$
\chi^w(\l):=\sum_{\mu\in\Lambda}m_{\l,w}(\mu)e^{\mu},
$$
where $m_{\l,w}(\mu)$ is the multiplicity of the weight $\mu$ in $V^-_{\l,w}$.

The main result of this section establishes a relation between the Demazure character of a Schubert variety
and the character of the union of the corresponding faces.

\begin{thm}
\label{t.Demazure}
For each permutation $w\in S_n$, the Demazure character
$\chi^w(\l)$ is equal to the character of the following union of faces:
$$\chi^w(\l)=\chi\left(\bigcup_{w(F_\l)=w}F_\l\right).$$
As usual, $F_\l$  runs only over reduced Kogan faces in the Gelfand--Zetlin polytope $P_\l$.
\end{thm}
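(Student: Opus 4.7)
The plan is to prove Theorem~\ref{t.Demazure} by induction on $l(w)$, matching the Demazure character recursion with a geometric paramitosis operation on reduced Kogan faces developed in Section~\ref{s.keylemma}.

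Denote the right-hand side by $\chi(w,\l)$. For the base case $w=e$, the unique reduced Kogan face with $w(F)=e$ is the whole polytope $P_\l$ (corresponding to the empty decomposition), so $\chi(e,\l)=\chi(P_\l)$. On the other hand $X^e=X$, hence $V^-_{\l,e}=V_\l$, and the identity $\chi(P_\l)=\chi^e(\l)=\mathrm{ch}(V_\l)$ is the defining property of the Gelfand--Zetlin parametrization of a weight basis of $V_\l$, recalled already in Section~\ref{s.GZ}. This handles the base case.

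For the inductive step, recall the Demazure recursion
$$
\chi^{s_iw}(\l) \;=\; T_i\bigl(\chi^w(\l)\bigr),
$$
where $T_i(f)=(f - e^{-\alpha_i} s_i(f))/(1-e^{-\alpha_i})$ is the Demazure operator associated to the simple root $\alpha_i$, with the direction ($s_iw$ versus $w$) dictated by the conventions of the paper. It therefore suffices to establish the parallel geometric recursion
$$
\chi(s_iw,\l) \;=\; T_i\bigl(\chi(w,\l)\bigr).
$$
This is the role of paramitosis: given a reduced Kogan face $F$ with $w(F)=w$, paramitosis produces an explicit collection of reduced Kogan faces with permutation $s_iw$, and as $F$ ranges over reduced Kogan faces with $w(F)=w$ these outputs exhaust, without repetition, the reduced Kogan faces with permutation $s_iw$. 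The combinatorial half of this bijection is Knutson--Miller mitosis on reduced pipe dreams, transported to Kogan faces via the bijection recalled in Section~\ref{s.GZ}.

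The main obstacle is the face-by-face character identity
$$
T_i\bigl(\chi(F_\l)\bigr) \;=\; \chi\Bigl(\bigcup_{F'\in\mathrm{Paramitosis}(F)} F'_\l\Bigr),
$$
which is the content of the Key Lemma of Section~\ref{s.keylemma}. The factor $1/(1-e^{-\alpha_i})$ in $T_i$ expands as a geometric series in the direction of $\alpha_i$ (the $u_i$-coordinate in the weight lattice), while the numerator compares the integer points of $F_\l$ with their $s_i$-reflections. The delicate convex-geometric step is to check that paramitosis assembles precisely the chain of Kogan faces whose images under $p$ telescope this geometric series into the finite sum of exponentials contributed by the integer points of $F_\l$ and its $s_i$-reflection. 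Once this per-face identity is in hand, summing over $F$ with $w(F)=w$ and invoking the inductive hypothesis completes the proof.
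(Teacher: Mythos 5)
Your overall strategy --- match the Demazure character recursion against a mitosis-type operation on reduced Kogan faces, reduced to a fiberwise computation on parallelepipeds --- is indeed the paper's approach. But two steps as you have written them would fail.

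First, your induction runs in the wrong direction. Demazure operators are idempotent and only \emph{build up} characters: in the paper's conventions $\chi^{s_iw}(\l)=T_i^-\chi^w(\l)$ holds when $l(s_iw)=l(w)-1$, so the recursion starts from the longest element and decreases length; the base case is $w=w_0$, where the unique reduced Kogan face is the Kogan vertex and $\chi^{w_0}(\l)=e^{w_0\l}$. Your base case $w=e$ (the whole polytope) is the \emph{conclusion} of the induction, not its start, and the recursion you invoke, $\chi^{s_iw}(\l)=T_i\bigl(\chi^w(\l)\bigr)$ with $l(s_iw)=l(w)+1$, is false: already for $n=2$, $T_1\chi^e(\l)=\chi^e(\l)$ (the full character is $s_1$-invariant), which is not the one-term character $\chi^{s_1}(\l)=e^{w_0\l}$. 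One cannot invert a Demazure operator to descend from $\chi^w$ to $\chi^{s_iw}$ with $l(s_iw)>l(w)$.

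Second, your ``face-by-face identity followed by summation over $F$'' is not available. The Key Lemma of the paper is necessarily a statement about the \emph{union} $\bigcup_{w(F)=w}F_\l$, not a sum over faces: distinct reduced Kogan faces with the same permutation overlap in lower-dimensional faces containing lattice points, so $\chi$ of the union is not the sum of the $\chi(F_\l)$, and likewise for the mitosis outputs. Moreover the per-face identity $T_i^-\chi(F_\l)=\chi\bigl(\bigcup_{F'\in M_i^-(F)}F'_\l\bigr)$ is simply false for a face whose $i$-th fiber paradiagram has empty initial parabox: its mitosis is empty while $T_i^-\chi(F_\l)\ne 0$. The real content of Section~\ref{s.keylemma} is exactly the bookkeeping you elide: the fibers of the faces form $L$-equivalence classes whose intersections are again $L$-classes (Proposition~\ref{p.cubic}), the identity $\Sc(M(A))=T\Sc(A)$ holds only for $L$-classes with nonempty initial parabox (Proposition~\ref{P:paramitosis}), unions are handled by inclusion--exclusion (Propositions~\ref{P:nonempty} and~\ref{P:paramitosis1}), and the faces with empty initial parabox must be shown to be absorbed into the paramitosis of the others (Lemma~\ref{l.global}, which uses the hypothesis $l(s_iw)=l(w)-1$ and the exchange property). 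Without these ingredients the inductive step does not close.
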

Note that in contrast with Theorem \ref{t.FK}, this theorem and its corollaries below
(describing Hilbert functions and degrees of Schubert varieties in projective embeddings to $\P(V_\l)$)
use exactly the polytope $P_\l$ and not just any of the analogous to $P_\l$ polytopes.
Whenever the choice of $\l$ matters we indicate this by using notation $F_\l$ instead of $F$ for the faces.

For Kempf permutations, Theorem \ref{t.Demazure} reduces
to \cite[Corollary 15.2]{PS}.
Note that by \cite[Proposition 2.3.2]{KThesis} a permutation $w$ is Kempf
if and only if there is a unique reduced Kogan face $F$ such that $w(F)=w$.
Hence, $\chi^w(\l)=\chi(F)$ in this case.

In Subsection~\ref{ss.mitosis}, we will reduce this theorem to a purely combinatorial Lemma~\ref{T:comb}.
The proof of this lemma is given in Subsection~\ref{ss.keylemma}.

Let us now obtain several corollaries from Theorem \ref{t.Demazure}. First, we can similarly describe the Demazure character of $B$--modules.
Let $V^+_{\l, w}$ be the Demazure $B$--module defined as the dual space to $H^0(X_w,\Lc_\l|_{X_w})$, where
$X_w$ is the closure of the $B$--orbit of $w$ in $X$ (in particular, $[X_w]=[X^{w_0w}]$ in $CH^*(X)$).

\begin{cor}
\label{c.Demazure}
For each permutation $w\in S_n$, the Demazure character
$\chi_w(\l)$ is equal to the character of the following union of faces:
$$
\chi_w(\l)=\chi\left(\bigcup_{w(F_\l^*)=ww_0}F^*_\l\right),
$$
where $F_\l^*$ runs over reduced dual Kogan faces in the Gelfand--Zetlin polytope.
\end{cor}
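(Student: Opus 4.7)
The plan is to deduce Corollary \ref{c.Demazure} from Theorem \ref{t.Demazure} by transporting the statement from $P_{-w_{0}\l}$ to $P_{\l}$ via the mirror involution $A$ used in the proof of Corollary \ref{c.FK}, once we identify $V^{+}_{\l,w}$ with $V^{-}_{-w_{0}\l,\,ww_{0}}$ up to a Chevalley twist.

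The key step is to establish the character identity
$$\chi_{w}(\l)=\overline{\chi^{ww_{0}}(-w_{0}\l)},$$
where $\overline{\phantom{x}}$ denotes the involution $e^{\mu}\mapsto e^{-\mu}$ of the group algebra of $\Lambda$. This comes from the automorphism $\phi\colon X\to X$ sending a complete flag to the flag of orthogonal complements, as already invoked in the proof of Corollary \ref{c.FK}. The map $\phi$ is $G$-anti-equivariant via the Chevalley involution (which swaps $B$ with $B^{-}$), and a direct check at $T$-fixed points gives $\phi(X_{w})=X^{ww_{0}}$ and $\phi^{*}\Lc_{\l}\simeq\Lc_{-w_{0}\l}$. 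Pullback of sections then provides a $T$-linear isomorphism between $H^{0}(X^{ww_{0}},\Lc_{\l}|_{X^{ww_{0}}})$ and $H^{0}(X_{w},\Lc_{-w_{0}\l}|_{X_{w}})$ in which $T$ acts on one side via $t\mapsto t^{-1}$; dualizing yields the displayed identity on $T$-characters. This is the main obstacle: one must carefully track the Chevalley twist induced by the non-$G$-equivariant map $\phi$ on $T$-equivariant sections, so that the weights of $V^{+}_{\l,w}$ are correctly identified with the negatives of the weights of $V^{-}_{-w_{0}\l,\,ww_{0}}$.

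Once the identity is in hand, Theorem \ref{t.Demazure} applied to the strictly dominant weight $-w_{0}\l$ and the permutation $ww_{0}$ gives
$$\chi^{ww_{0}}(-w_{0}\l)=\chi\Bigl(\bigcup_{w(F)=ww_{0}}F_{-w_{0}\l}\Bigr),$$
where the union runs over reduced Kogan faces of $P_{-w_{0}\l}$. The linear involution $A\colon\R^{d}\to\R^{d}$, $\l_{i,j}\mapsto-\l_{i,\,n-i-j+1}$, carries $P_{\l}$ onto $P_{-w_{0}\l}$, and inspection of the defining equations shows that it exchanges reduced Kogan faces with reduced dual Kogan faces while preserving the associated permutation. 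A short computation using $u_{i}=\sum_{j}\l_{i,j}$ yields $p\circ A=-p$, hence $\overline{\chi(A(S))}=\chi(S)$ for every subset $S\subseteq P_{\l}$. Combining these ingredients,
$$\chi_{w}(\l)=\overline{\chi^{ww_{0}}(-w_{0}\l)}=\overline{\chi\Bigl(\bigcup_{w(F)=ww_{0}}F_{-w_{0}\l}\Bigr)}=\chi\Bigl(\bigcup_{w(F^{*}_{\l})=ww_{0}}F^{*}_{\l}\Bigr),$$
which is the statement of Corollary \ref{c.Demazure}.
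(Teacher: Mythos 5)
Your proof is correct, but it takes a different (and more explicit) route than the paper's. The paper disposes of Corollary \ref{c.Demazure} in one line: it invokes the \emph{proof} of Theorem \ref{t.Demazure} (i.e.\ the mitosis induction, rerun with the operators $T_i$ in place of $T_i^-$ on dual Kogan faces) together with the identity $\chi_w(\l)=w_0\chi^{w_0w}(\l)$, staying at the same weight $\l$ throughout. You instead use only the \emph{statement} of Theorem \ref{t.Demazure}, applied at the strictly dominant weight $-w_0\l$ and the permutation $ww_0$, and transport the result back to $P_\l$ via the linear involution $A:\l_{i,j}\mapsto-\l_{i,n-i-j+1}$, which exchanges reduced Kogan faces of $P_{-w_0\l}$ with reduced dual Kogan faces of $P_\l$, preserves the associated permutation (as the paper notes when defining $w(F^*)$), and satisfies $p\circ A=-p$. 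Your pivot identity $\chi_w(\l)=\overline{\chi^{ww_0}(-w_0\l)}$ is correct; I checked your geometric derivation via the orthogonal-complement automorphism $\phi$ (indeed $\phi(X_w)=X^{ww_0}$, $\phi^*\Lc_\mu\simeq\Lc_{-w_0\mu}$, and the Chevalley twist inverts the torus action), but note that what you call the main obstacle can be bypassed entirely: the identity follows formally from Theorem \ref{t.Dem_formula} by writing $ww_0=w_0(w_0ww_0)$, conjugating the reduced word by $w_0$, and using $\overline{T_i^-(f)}=T_i(\overline{f})$ together with $w_0(-w_0\l)=-\l$. The trade-off is that your argument treats Theorem \ref{t.Demazure} as a black box at the cost of a change of weight and an auxiliary involution, whereas the paper's argument avoids changing $\l$ at the cost of re-examining the inductive mitosis proof; both are legitimate, and yours is arguably the more self-contained deduction.
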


The corollary follows immediately from the proof of Theorem \ref{t.Demazure} together
with the definition of dual Kogan faces since
$\chi_w(\l)=w_0\chi^{w_0w}(\l)$.

Another corollary from Theorem \ref{t.Demazure} describes the Hilbert function
of the Schubert variety $X^w$ embedded into $\P(H^0(X^w,\Lc_\l|_{X^w})^*)\subset\P(V_\l)$.

\begin{cor}
\label{c.Ehrhart}
For any permutation $w\in S_n$, the dimension of the space
$H^0(X^w,\Lc_\l|_{X^w})$
is equal to the number of integer points in the union of
all reduced Kogan faces with permutation $w$:
$$
\dim H^0(X^w,\Lc_\l|_{X^w})=\left|\bigcup_{w(F)=w}F_\l\cap\Z^d\right|.
$$
In particular, the Hilbert function $H_{\l,w}(k):=\dim H^0(X^w,\Lc_\l^{\otimes k}|_{X^w})$
is equal to the Ehrhart polynomial of $\bigcup_{w(F_\l)=w}F_\l$, that is,
$$
H_{\l,w}(k)=\left|\bigcup_{w(F_\l)=w}kF_\l\cap\Z^d\right|
$$
for all positive integers $k$.
\end{cor}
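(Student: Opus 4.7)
The plan is to deduce both parts of Corollary \ref{c.Ehrhart} from Theorem \ref{t.Demazure} by a specialization argument: evaluating both sides of the character identity
$$
\chi^w(\l) = \chi\left(\bigcup_{w(F_\l) = w} F_\l\right)
$$
at the trivial point of the character torus (that is, setting every formal exponential $e^\mu$ to $1$) converts a character into a dimension on the left and into a count of integer points on the right.

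For the first statement, I would first recall that $V^-_{\l, w}$ is by definition dual to $H^0(X^w, \Lc_\l|_{X^w})$, so the two spaces have the same dimension, and this dimension equals the sum of all the weight multiplicities $m_{\l, w}(\mu)$ over $\mu \in \Lambda$. This is exactly the value of $\chi^w(\l)$ under the specialization $e^\mu \mapsto 1$. On the other hand, applying the same specialization to the character $\chi(S)$ of a subset $S \subset P_\l$ simply counts the integer points of $S$. Applying this to $S = \bigcup_{w(F_\l) = w} F_\l$ and invoking Theorem \ref{t.Demazure} yields the desired equality $\dim H^0(X^w, \Lc_\l|_{X^w}) = \bigl|\bigcup_{w(F_\l) = w} F_\l \cap \Z^d\bigr|$.

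For the Hilbert function, I would observe that $\Lc_\l^{\otimes k} = \Lc_{k\l}$, so that
$$
H_{\l, w}(k) = \dim H^0(X^w, \Lc_{k\l}|_{X^w}).
$$
Since $\l$ is strictly dominant, so is $k\l$ for every positive integer $k$, and the first part of the corollary applies with $\l$ replaced by $k\l$. It remains to identify the integer-point count in $\bigcup_{w(F_{k\l}) = w} F_{k\l}$ with the Ehrhart quantity $\bigl|\bigcup_{w(F_\l) = w} k F_\l \cap \Z^d\bigr|$. This follows from inspecting the defining inequalities in table $(GZ)$: replacing $\l$ by $k\l$ scales every inequality by $k$, so $P_{k\l} = k P_\l$, and each reduced Kogan face is cut out by the same coordinate equations $\l_{i,j} = \l_{i+1,j}$, which are homogeneous; hence $F_{k\l} = k F_\l$, and the labeling of faces by permutations $w(F)$ depends only on the equations defining $F$, not on the specific weight. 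Combining these identifications finishes the argument.

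The bulk of the corollary is essentially a direct consequence of Theorem \ref{t.Demazure}, so there is no serious obstacle beyond the scaling remark for the Hilbert function statement; the only point requiring genuine care is checking that the natural bijection between reduced Kogan faces of $P_\l$ and of $P_{k\l}$ preserves the permutation $w(F)$ and intertwines the scaling $F_\l \mapsto k F_\l$, which is immediate from the combinatorial definitions in Section \ref{s.GZ}.
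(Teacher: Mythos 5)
Your proposal is correct and follows the same route the paper intends: the corollary is stated as an immediate consequence of Theorem \ref{t.Demazure}, obtained by specializing all formal exponentials $e^\mu$ to $1$ (turning the Demazure character into $\dim V^-_{\l,w}=\dim H^0(X^w,\Lc_\l|_{X^w})$ and the face character into a lattice-point count), together with the observations $\Lc_\l^{\otimes k}=\Lc_{k\l}$ and $F_{k\l}=kF_\l$. Your explicit check that scaling preserves the Kogan-face labeling is a reasonable piece of diligence that the paper leaves implicit.
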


This corollary will be essential for the proof of Theorem~\ref{t.FK}.

\subsection{Degrees of Schubert varieties}\label{ss.degrees}
To prove Theorem \ref{t.FK} we first prove an analogous identity
for the {\em degree polynomials} of Schubert varieties.
The {\em degree polynomial} $D_w$ on $\R^n$ is uniquely characterized by the
property that $(d-l(w))!D_w(\l)=\deg_\l(X^w)$ for all dominant $\l\in\Z^n\subset\R^n$.
In particular, $D_{e}=\frac{1}{d!}\deg_\l(X)=\Volume(Q_\l)$ and $D_{w_0}=1$.
The degree polynomials originated in the work of Bernstein--Gelfand--Gelfand
\cite{BGG} and were recently studied by Postnikov and Stanley \cite{PS}.
Below we prove identities relating the degree polynomial and the volumes
of faces of the Gelfand--Zetlin polytope.

Denote by $\R F\subset\R^d$ the affine span of a face $F$.
In the formulas displayed below, the volume form on $\R F$ is normalized
so that the covolume of the lattice $\Z^d\cap \R F$ in $\R F$ is equal to 1. Then the following theorem holds:

\begin{thm}\label{t.deg}$$D_w=\sum_{w(F_\l)=w}\Volume(F_\l)$$
$$D_w=\sum_{w(F_\l^*)=w_0ww_0^{-1}}\Volume(F_\l^*)$$
\end{thm}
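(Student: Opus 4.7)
The plan is to derive both identities by comparing the leading coefficients of the Hilbert polynomial of $X^w$ (embedded projectively via $\Lc_\l$) with those of the Ehrhart polynomial of the union of faces produced by Corollary \ref{c.Ehrhart} (respectively, Corollary \ref{c.Demazure}).

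For the first identity, recall that the Hilbert polynomial of $X^w\subset\P(H^0(X^w,\Lc_\l|_{X^w})^*)$ has degree $\dim X^w=d-l(w)$ and leading coefficient $\deg_\l(X^w)/(d-l(w))!$, which by definition equals $D_w(\l)$. By Corollary \ref{c.Ehrhart}, this Hilbert polynomial coincides with the Ehrhart polynomial of the union $U_\l:=\bigcup_{w(F_\l)=w}F_\l$. Each reduced Kogan face $F_\l$ with $w(F_\l)=w$ is cut out of $P_\l$ by $l(w)$ independent linear equations (one for each factor in a reduced word for $w$, via the bijection with reduced pipe-dreams of \cite[2.2.1]{KThesis}), so $\dim F_\l=d-l(w)$. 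Two distinct reduced Kogan faces with the same permutation $w$ meet along a face of $P_\l$ satisfying strictly more relations, hence of strictly smaller dimension. Inclusion--exclusion then yields
$$
|kU_\l\cap\Z^d|=\Bigl(\sum_{w(F_\l)=w}\Volume(F_\l)\Bigr)k^{d-l(w)}+O(k^{d-l(w)-1}).
$$
Equating leading coefficients gives the first identity pointwise on the strictly dominant chamber; since both sides are polynomials in $\l$ (volumes of faces of $P_\l$ depend polynomially on $\l$ because the vertices of $F_\l$ depend linearly on it), the pointwise equality upgrades to a polynomial identity.

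For the second identity, I would use $X^w=X_{w_0w}$ (the two varieties represent the same class in $CH^*(X)$, hence have the same degree with respect to any line bundle) and apply Corollary \ref{c.Demazure} with $w$ replaced by $w_0w$. This identifies the Hilbert polynomial of $X_{w_0w}$ with the Ehrhart polynomial of $\bigcup_{w(F^*_\l)=w_0ww_0}F^*_\l$. Since conjugation by $w_0$ sends $s_i$ to $s_{n-i}$ and so preserves length, reduced dual Kogan faces with permutation $w_0ww_0$ have codimension $l(w_0ww_0)=l(w)$, so the same leading-coefficient argument produces the second identity (using $w_0^{-1}=w_0$).

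The main obstacle will be the dimensional bookkeeping: verifying that every reduced Kogan (or dual Kogan) face with the prescribed permutation has codimension exactly $l(w)$, and that two distinct such faces meet in strictly smaller dimension. Both facts follow from the pipe-dream picture, but they are precisely what makes the leading-term extraction legitimate. Once they are in place, the rest of the argument is a routine comparison of leading coefficients of two polynomials of the same degree.
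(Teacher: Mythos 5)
Your proposal is correct and follows essentially the same route as the paper: combine Corollary \ref{c.Ehrhart} (resp.\ Corollary \ref{c.Demazure} together with $[X^w]=[X_{w_0w}]$ for the dual statement) with Hilbert's theorem, and match the leading coefficient of the Hilbert polynomial of $X^w$ with the leading coefficient of the Ehrhart polynomial of the union of faces, the lower-dimensional overlaps being negligible. You actually spell out more of the dimensional bookkeeping (codimension of reduced Kogan faces equals $l(w)$, pairwise intersections are smaller-dimensional) than the paper, which cites the argument of \cite{Kh} and leaves these points implicit.
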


For Kempf permutations, the first equality of Theorem \ref{t.deg} reduces to the last formula from
\cite[Corollary 15.2]{PS}.

\begin{proof} Theorem \ref{t.deg} follows easily from Corollary \ref{c.Ehrhart} and Hilbert's theorem
describing the leading monomial of the Hilbert polynomial by the same arguments as in
\cite{Kh}.
Indeed, by Hilbert's theorem, $\dim(V^-_{k\l,w})$ is a polynomial in $k$ (for large $k$),
and its leading term is equal to $D_w(\l)k^d$.
Next, note that $\dim(V^-_{k\l,w})$ is the number of integer points in
$\bigcup_{w(F_\l)=w}kF_\l$  by Corollary \ref{c.Ehrhart}.
Finally, use that the volume of $F$ for each face $F$ is the
leading term in the Ehrhart polynomial of this face (since $\Volume(kF)=k^n \Volume(F)$ is
approximately equal to the number of integer points in $kF$ for large $k$).
\end{proof}

\begin{remark}
\label{r.KM}
Dual Kogan faces are exactly the faces considered in \cite[\S4]{KM}.
Note that the definition of $w(F^*)$ in \cite{KM} is different from ours
as well as from that of \cite{KThesis}.
Namely, in our notation they associate to a dual Kogan face
$F^*$ the permutation $w_0ww_0^{-1}$.
However, since their Schubert cycle
$[X_w]$ is defined so that it coincides with our Schubert cycle
$[X^{w_0ww_0^{-1}}]$ (see Remark \ref{r.notation1})
their Theorem \cite[Theorem 8]{KM} (describing the toric
degeneration of the Schubert variety $X_w$) uses exactly the same faces as
our the second equality of Theorem \ref{t.deg}, and the latter can be deduced from the former
by standard arguments from toric geometry.
\end{remark}

\begin{proof}[Proof of Theorem \ref{t.FK}] We now deduce Theorem \ref{t.FK} from Theorem \ref{t.deg} using Lemma \ref{L:Poincare}.
Recall that the lattice $\Lambda_{P}$ is a sublattice of $\Lambda_{Q}$.
In particular, the polytope $P_\l$ can be regarded as an element of
$\Lambda_{Q}=\Sym^1(\Lambda_{Q})$.
Let $L_\lambda$ denote the image of $P_\l$ under the canonical projection
$\Sym(\Lambda_{Q})\to R_{Q}$.
It is easy to check that, under the isomorphism of Theorem \ref{T:iso},
the class $\pi(L_\l)$ corresponds to the first Chern class of the line bundle
$\Lc_\l$.
Hence, we have the following identity in $R_P$:
$$
[X^w]\pi(L_\l)^{d-l} =(\deg_\l X^w)[pt],
$$
where $d-l=d-l(w)$ is the dimension of the variety $X^w$
(the product in the left-hand side is taken in $R_P$; according to
our usual convention, we identify elements of $R_P$ with their images
in $M_{Q,P}$).

On the other hand, it is easy to check that for any face $F_\l\subset Q_\l$ of
codimension $l$ we have that the product $[F_\l]L_\l^{d-l}$ in $R_Q$ is equal to
$(d-l)!\Volume(F_\l)$ times the class of a vertex.
Hence, by Theorem \ref{t.deg} we have
$$
[X^w]\pi(L_\l)^{d-l}=\pi(\sum_{w(F)=w}[F]L_\l^{d-l}).
$$
Since elements of the form $\pi(L_\l)^{d-l}$ span $R_P^{d-l}$,
we can apply Lemma \ref{L:Poincare} and conclude that
$[X^w]=\pi(\sum_{w(F)=w}[F])$.

Theorem~\ref{t.FK} is proved.
\end{proof}

\subsection{The Demazure character formula}
To prove Theorem \ref{t.Demazure}, we use the Demazure character formula
for $\chi^w(\l)$ together with a purely combinatorial argument.
We now recall the Demazure character formula (see \cite{A} for more details).
For each $i=1,\dots,n-1$, define the operator $T_i$ on
the group algebra of the weight lattice of $G$ by the formula
$$
T_i(f)=\frac{f-e^{-\a_i}s_i(f)}{1-e^{-\a_i}}.
$$
Similarly, define the operator $T^-_i$ by the formula
$$
T^-_i(f)=\frac{f-e^{\a_i}s_i(f)}{1-e^{\a_i}}.
$$

\begin{thm}[\cite{A}]
\label{t.Dem_formula}
Let $w=s_{i_1}\ldots s_{i_l}$ be a reduced decomposition of $w$.
Then the Demazure characters $\chi_w(\l)$ and $\chi^{w_0w}(\l)$
can be computed as follows:
$$
\chi_w(\l)=T_{i_1}\ldots T_{i_l}e^\l,
$$
$$
\chi^{w_0w}(\l)=T^-_{n-i_1}\ldots T^-_{n-i_l}e^{w_0\l}.
$$
\end{thm}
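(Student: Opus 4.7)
The plan is to prove both formulas by induction on $l(w)$, using Demazure's classical approach via the fibration of the flag variety $X = G/B$ over the minimal partial flag varieties $G/P_i$. The two displayed formulas are parallel (one for the $B$-Demazure module $V^+_{\l,w}$, one for the $B^-$-Demazure module $V^-_{\l, w_0w}$). They can be proved by the same method independently, or one can be deduced from the other using the automorphism of $X$ induced by left multiplication by $w_0$, which exchanges $B$ and $B^-$, carries $X_w$ to $X^{w_0w}$, interchanges the simple reflections $s_i \leftrightarrow s_{n-i}$ (since $w_0 s_i w_0 = s_{n-i}$ for $GL_n$), and twists the weight $\l$ into $w_0\l$ at the level of line bundles.

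Focus on the first formula. The base case $w = e$ is immediate: $X_e$ is a point and $\Lc_\l|_{X_e}$ is the one-dimensional weight space of weight $\l$, so $\chi_e(\l) = e^\l$. For the inductive step, write $w = v s_i$ with $l(w) = l(v) + 1$, and consider the natural $\P^1$-bundle projection $\pi_i : X \to G/P_i$, where $P_i$ is the minimal parabolic corresponding to $s_i$. Since $l(vs_i) > l(v)$, a standard fact about Schubert varieties gives $X_w = \pi_i^{-1}(\pi_i(X_v))$, so $\pi_i|_{X_w}$ is a $\P^1$-bundle over $\pi_i(X_v) \cong X_v$.

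The key technical input is the cohomology vanishing $R^j(\pi_i|_{X_w})_*\bigl(\Lc_\l|_{X_w}\bigr) = 0$ for $j > 0$ when $\l$ is dominant; this is the classical result on line bundles over Schubert varieties, established either via Demazure's desingularization (as corrected by Joseph and Andersen) or uniformly via the Frobenius splitting methods of Mehta--Ramanathan and Ramanan--Ramanathan. Combined with the Leray spectral sequence, this yields
$$
H^0(X_w, \Lc_\l|_{X_w}) \;\cong\; H^0\bigl(X_v, (\pi_i)_*\Lc_\l|_{X_w}\bigr).
$$
On each fiber, the pushforward has character equal to that of $H^0(\P^1, \mathcal{O}(\langle\l,\a_i^\vee\rangle))$ with a torus action whose weights at the two fixed points are controlled by $\l$ and $s_i(\l)$; a direct computation of the character of this $T$-representation on $\P^1$ identifies the fiberwise transformation with the Demazure operator $T_i$. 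Combined with the inductive hypothesis $\chi_v(\l) = T_{i_1}\cdots T_{i_{l-1}} e^\l$, this gives $\chi_w(\l) = T_i\chi_v(\l) = T_{i_1}\cdots T_{i_l} e^\l$, as required.

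The main obstacle is the cohomology vanishing, on which the whole argument essentially rests; it is classical but nontrivial and historically required considerable work. Once the formula is established, the independence of $T_{i_1}\cdots T_{i_l} e^\l$ from the choice of reduced decomposition is automatic (since the left-hand side is intrinsically defined), and can alternatively be verified directly by checking that the operators $T_i$ satisfy the braid relations of the Weyl group.
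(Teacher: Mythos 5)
First, a point of comparison: the paper does not actually prove this theorem. It is quoted from Andersen \cite{A}, and the only argument supplied in the text is the derivation of the second displayed identity from the first, via $\chi_w(\l)=w_0\chi^{w_0w}(\l)$ and $w_0T_i=T^-_{n-i}w_0$. Your outline (induction on $l(w)$, the fibrations $X\to G/P_i$, higher cohomology vanishing for line bundles on Schubert varieties as the key input, and the $w_0$-twist to pass between the two identities) is indeed the classical Demazure--Andersen route, so the overall strategy is sound.

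However, your inductive step contains a genuine error. The recursion you extract from the $\P^1$-bundle $\pi_i\colon X_{vs_i}\to\pi_i(X_v)$, namely $\chi_{vs_i}(\l)=T_i\chi_v(\l)$, is false, and the concluding equality $T_i\chi_v(\l)=T_{i_1}\cdots T_{i_l}e^\l$ silently replaces $T_{i_l}\circ T_{i_1}\cdots T_{i_{l-1}}$ by $T_{i_1}\cdots T_{i_{l-1}}\circ T_{i_l}$; the operators $T_i$ do not commute, and these differ already for $n=3$. Concretely, take $w=s_1s_2$, $v=s_1$, $i=2$. The module $H^0(X_{s_1s_2},\Lc_\l)^*$ contains the extremal weight vector of weight $s_1s_2\l$, so its character is $T_1T_2e^\l$ (which contains $e^{s_1s_2\l}$), whereas $T_2\chi_{s_1}(\l)=T_2T_1e^\l$ contains $e^{s_2s_1\l}$ instead. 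The source of the error is that $(\pi_i)_*\bigl(\Lc_\l|_{X_{vs_i}}\bigr)$ is a vector bundle of rank $\langle\l,\a_i^\vee\rangle+1$, and its space of global sections is not obtained by applying $T_i$ to the character of $H^0(X_v,\Lc_\l)$: in the correct computation the operator contributed by the \emph{fiber} direction enters the composition \emph{innermost}, not outermost. (In the example, $\pi_2(X_{s_1s_2})\cong P_1/B$ and the pushforward restricts to $P_1\times^B M$ with $M=H^0(P_2/B,\Lc_\l)$, so one obtains $T_1$ applied to the fiber character $T_2e^\l$, i.e.\ $T_1T_2e^\l$.) The recursion compatible with the stated formula is $\chi_{s_iv}(\l)=T_i\chi_v(\l)$ for $l(s_iv)=l(v)+1$ --- stripping the \emph{first} letter --- which geometrically corresponds to the Bott--Samelson-type fibration $P_{i_1}\times^B X_{v}\to P_{i_1}/B$ with fiber $X_v$, together with Demazure's lemma computing the character of $H^0(P_i/B,P_i\times^B M)$ for a $B$-module $M$ admitting a suitable filtration (this is where the vanishing and normality results you invoke are really used). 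As written, your induction does not close; the base case, the role of cohomology vanishing, and the braid-relation remark are all fine, but the inductive step must be reorganized as above.
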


The first identity is the standard form of the Demazure character formula.
We will use the second identity, which follows immediately from the
first one since
$\chi_w(\l)=w_0\chi^{w_0w}(\l)$ and $w_0T_i=T_{n-i}^-w_0$.

Note that this theorem is similar to Theorem \ref{T:BGG} (and especially to its
K-theory version \cite{De}, see also \cite[\S3]{RP})
that describes Schubert cycles using the divided difference operators.
However, in the former theorem we apply
the operators $T_{i_j}$ in the same order as
elementary transpositions $s_{i_j}$ appear in a reduced decomposition of $w$,
while in the latter theorem the order is opposite (that is, the same as in $w^{-1}$).

\subsection{Mirror mitosis}\label{ss.mitosis}
{\em Mitosis} is a combinatorial operation introduced in \cite{KnM,Mi}
that produces a set of Kogan faces out of a Kogan face.\footnote{
The original definition was in terms of pipe-dreams rather than Kogan faces.}
If we apply mitosis (in the $i$-th column) to all reduced Kogan faces corresponding
to a permutation $w$, then we obtain all reduced Kogan faces corresponding
to the permutation $ws_i$, provided that $l(ws_i)=l(w)-1$.
We will need {\em mirror mitosis}, which is obtained from the mitosis
by the transposition of face diagrams (interchanging rows and columns). In other words,
mirror mitosis for $w$ is the usual mitosis for  $w^{-1}$. We use mirror mitosis
to deduce Theorem \ref{t.Demazure} from the Demazure character formula.
We now give a direct definition of the mirror mitosis.

Let $F$ be a reduced Kogan face of dimension $l$.
For each $i=1,\ldots,n-1$, we
construct a set $M^-_i(F)$ of reduced Kogan faces of dimension $l+1$ as follows. For each $i=1$,\ldots, $n-1$, we say that the diagram of $F$ has an edge in the $i$-th row  if the face $F$ satisfies an equation $\l_{i-1,j}=\l_{i,j}$ for some $j$. Similarly, we say that the diagram of $F$ has an edge in the $i$-th column
if there is an equation $\l_{j-1,i}=\l_{j,i}$ for some $j$.
Consider the $i$-th row in the face diagram of $F$.
If it does not have an edge in the first column, then $M^-_i(F)$ is empty.
Suppose now that the $i$-th row of $F$ contains edges in all columns
from the first to the $k$-th (inclusive), and does not have an edge in
the $(k+1)$-st column.
Then for each $j\le k$ we check whether the $(i+1)$-st row has an edge at
the $j$-th column.
If it does, we do nothing.
The elements of $M^-_i(F)$ correspond to the values of $j$, for
which there is no edge at the intersection of the $(i+1)$-st row
and the $j$-th column.
For such value of $j$, we delete the $j$-th edge in the $i$-th row and shift all
edges on the left of it in the same row one step south-east (to the $(i+1)$-st row)
whenever possible.
A new reduced Kogan face $F_{i,j}$ thus obtained is called the $j$-th
{\em offspring} of $F$ at the $i$-th row.
The set $M^-_i(F)$ consists of offsprings $F_{i,j}$ for all $1\le j\le k$.

The cardinality of $M^-_i(F)$ is equal to $k-k'$, where $k'$ is  the number of edges in the
first $k$ places of the $(i+1)$-st row. This is the same as the number of monomials
in $A_i(x_i^kx_{i+1}^{k'})$.
An illustration of mirror mitosis is given in Figure 3.

\begin{figure}\label{f.mitosis}
  \includegraphics[width=6cm]{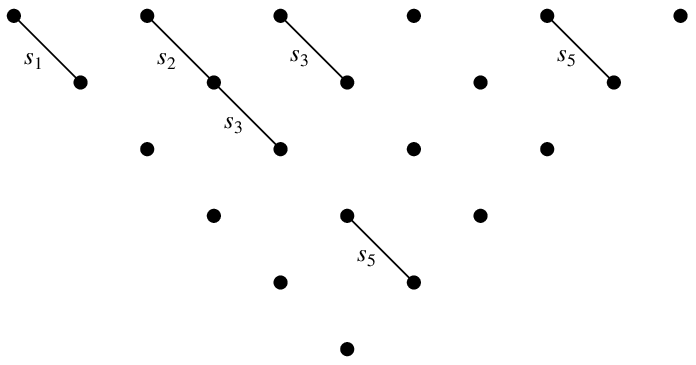}\\
\includegraphics[width=6cm]{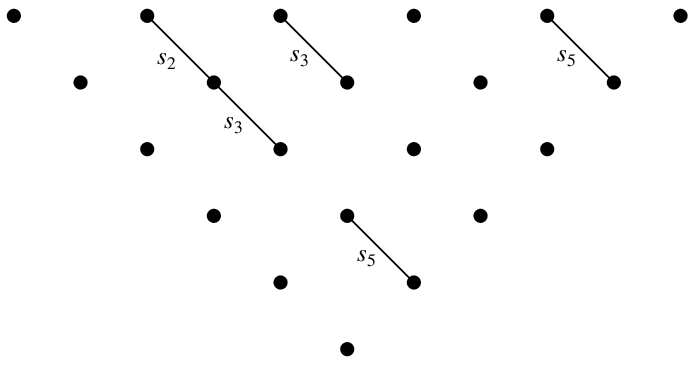}\qquad
\includegraphics[width=6cm]{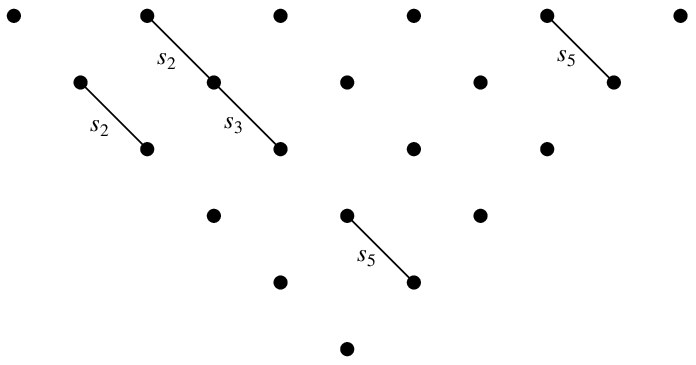}
\caption{The mirror mitosis applied to the first row of the face
shown above is the set of two faces shown below}
\end{figure}

The following theorem follows from the properties of the usual mitosis \cite{Mi}:

\begin{thm}
\label{t.mitosis}
If $l(s_iw)=l(w)-1$, then
$$
\bigcup_{w(F)=w}M^-_i(F)=\bigcup_{w(E)=s_iw}\{E\}.
$$
\end{thm}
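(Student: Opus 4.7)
The plan is to reduce the statement to the known mitosis theorem for usual (not mirror) mitosis proved in \cite{KnM,Mi}, using the transposition symmetry of face diagrams already noted in the paper.

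First I would make precise the correspondence $F\mapsto F^T$ where $F^T$ denotes the face whose diagram is obtained from the diagram of $F$ by interchanging rows and columns (i.e.\ reflecting in the main diagonal). Concretely, an edge at the intersection of the $i$-th row and the $j$-th column of the diagram of $F$ becomes an edge at the intersection of the $j$-th row and the $i$-th column of $F^T$. Under the identification of reduced Kogan faces with reduced pipe-dreams (see \cite[2.2.1]{KThesis}), this is precisely the transposition of pipe-dreams. Two facts to record: (a) $F$ is reduced if and only if $F^T$ is reduced, and (b) $w(F^T)=w(F)^{-1}$, because reading the simple reflections of $F^T$ top-to-bottom and then left-to-right produces the reverse of the word one reads for $F$ (left-to-right, then top-to-bottom), and reversing a reduced word for $w$ gives a reduced word for $w^{-1}$.

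Next I would verify the analogous intertwining for the mitosis operations: mirror mitosis $M^-_i$ applied to the $i$-th row of $F$ is taken by the transposition $F\mapsto F^T$ to ordinary mitosis $M_i$ applied to the $i$-th column of $F^T$. This is essentially immediate from the definitions. The description of $M^-_i(F)$ in the paper --- find the maximal prefix of edges in the $i$-th row, inspect the $(i{+}1)$-st row for edges blocking offspring, delete the $j$-th edge and slide the edges to its left one step south-east --- becomes, after transposition of rows and columns and of the roles of ``south-east'' and ``east/down'', the definition of $M_i$ acting on the $i$-th column of $F^T$ used by Knutson--Miller and Miller. In particular $M^-_i(F)=\{E^T\mid E\in M_i(F^T)\}$.

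Now I would invoke the known mitosis theorem for the Knutson--Miller mitosis (\cite{KnM}, in the form stated in \cite{Mi}): if $l(vs_i)=l(v)-1$, then
\[
\bigcup_{w(G)=v} M_i(G) \;=\; \bigcup_{w(H)=vs_i}\{H\},
\]
where $G$, $H$ range over reduced pipe-dreams (equivalently, reduced Kogan faces under our convention, read with our orientation). Set $v=w^{-1}$. The assumption $l(s_iw)=l(w)-1$ is equivalent to $l(w^{-1}s_i)=l(w^{-1})-1$, i.e.\ $l(vs_i)=l(v)-1$. Applying the displayed equality with $G=F^T$ gives
\[
\bigcup_{w(F^T)=w^{-1}} M_i(F^T) \;=\; \bigcup_{w(H)=w^{-1}s_i}\{H\}.
\]
By (b) the indexing sets $\{F:w(F)=w\}$ and $\{F^T:w(F^T)=w^{-1}\}$ coincide; moreover $w^{-1}s_i=(s_iw)^{-1}$, so the right hand side, after applying the transposition $H\mapsto H^T=E$, becomes $\bigcup_{w(E)=s_iw}\{E\}$. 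Transposing back using $M^-_i(F)=M_i(F^T)^T$ yields the claimed equality.

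The main obstacle is purely bookkeeping: making sure that our non-standard labeling conventions (our $w(F)$ is the inverse of Kogan's $w(F)$, and our face diagrams are reflections of the ones in \cite{KThesis}, as flagged in the footnotes) do not introduce a spurious $w\leftrightarrow w^{-1}$ or $w\leftrightarrow w_0ww_0$ twist when we quote \cite{Mi}. Once fact (b) and the conversion of $M^-_i$ into $M_i$ by transposition are spelled out, the theorem follows from the usual mitosis theorem with no additional combinatorial work.
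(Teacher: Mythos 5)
Your argument is correct and is exactly the route the paper takes: the paper proves this theorem only by remarking that mirror mitosis for $w$ is, by transposition of diagrams, the usual mitosis for $w^{-1}$, and then citing the mitosis theorem of \cite{KnM,Mi}. You have simply spelled out the bookkeeping (the facts $w(F^T)=w(F)^{-1}$, $M^-_i(F)=M_i(F^T)^T$, and the equivalence $l(s_iw)=l(w)-1\Leftrightarrow l(w^{-1}s_i)=l(w^{-1})-1$) that the paper leaves implicit.
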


\begin{varproof}[Proof of Theorem \ref{t.Demazure}] now follows by backwards induction on $l(w)$ from the
Demazure character formula
(the second identity of Theorem \ref{t.Dem_formula}), Theorem \ref{t.mitosis} and the following lemma.

\begin{keylemma}
\label{T:comb}
For each permutation $w\in S_n$ and an elementary transposition $s_i$
such that $l(s_iw)=l(w)-1$
we have
$$
T_i^-\chi(\bigcup_{w(F)=w}F_\l)=
\chi(\bigcup_{\genfrac{}{}{0pt}{2}{w(F)=w}{E\in M^-_i(F)}}E_\l).
$$
\end{keylemma}

The proof of this lemma is purely combinatorial.
It is given in Subsection \ref{ss.keylemma}.
\end{varproof}

\section{Mitosis on parallelepipeds}\label{s.keylemma}

In this section, we reduce the mitosis on the faces of the
Gelfand--Zetlin polytope to an analogous operation (called {\em paramitosis}) on the
faces of a parallelepiped.
The latter is easier to study and has a transparent geometric meaning
(see Remark \ref{r.simposis}).
Paramitosis for parallelepipeds and its applications to exponential sums and
Demazure operators are studied in Subsections \ref{ss:para} and \ref{ss:paramitosis}.
The material therein is self-contained, and all results are proved by elementary methods.
These results are then used in Subsection \ref{ss.keylemma} to prove Key Lemma
\ref{T:comb}.
Another application is Proposition \ref{p.cubic} that gives
a new minimal realization of a simplex as a cubic complex.

\subsection{Parallelepipeds}
\label{ss:para}
Consider integer numbers $\mu_1$, $\dots$, $\mu_m$, $\nu_1$, $\dots$, $\nu_m$, such that $\mu_k\le\nu_k$ for all $k=1,\dots,m$.
Define the parallelepiped $\Pi(\mu,\nu)$ as the convex polytope
$$
\Pi(\mu,\nu)=\{y=(y_1,\dots,y_m)\in\R^m\ |\ \mu_k\le y_k\le \nu_k,\ k=1,\dots,m\}.
$$
For any parallelepiped $\Pi=\Pi(\mu,\nu)$, consider the following sum
$$
S_\Pi(t)=\sum_{y\in\Pi\cap\Z^m}t^{\sigma(y)},\quad where\quad  \sigma(y)=\sum_{k=1}^m y_k.
$$
This is a polynomial in $t$.
It can be found explicitly:

\begin{prop}
\label{P:Sformula}
 We have
 $$
 S_\Pi(t)=\prod_{k=1}^m\frac{t^{\nu_k+1}-t^{\mu_k}}{t-1}.
 $$
\end{prop}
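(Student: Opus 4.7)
The proof plan is essentially a direct computation, exploiting the product structure of the parallelepiped together with the additive nature of $\sigma(y)$.

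First, I would observe that because $\Pi(\mu,\nu)$ is a Cartesian product of intervals, the integer points factor as
\[
\Pi \cap \Z^m = \prod_{k=1}^{m} \bigl([\mu_k,\nu_k] \cap \Z\bigr),
\]
so summing over $y \in \Pi \cap \Z^m$ is the same as summing independently over each coordinate $y_k \in \{\mu_k,\mu_k+1,\ldots,\nu_k\}$.

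Next, the exponent $\sigma(y) = y_1 + \cdots + y_m$ turns $t^{\sigma(y)}$ into a product $t^{y_1}\cdots t^{y_m}$, which allows the total sum to split into a product of one-variable sums:
\[
S_\Pi(t) = \sum_{y_1 = \mu_1}^{\nu_1} \cdots \sum_{y_m=\mu_m}^{\nu_m} t^{y_1+\cdots+y_m} = \prod_{k=1}^{m} \sum_{y_k=\mu_k}^{\nu_k} t^{y_k}.
\]

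Finally, I would apply the standard finite geometric series identity $\sum_{y=\mu_k}^{\nu_k} t^y = \frac{t^{\nu_k+1}-t^{\mu_k}}{t-1}$ to each factor, obtaining the claimed product formula. There is no serious obstacle here; the only thing worth noting is that the formula is an identity of rational functions in $t$ (with a removable singularity at $t=1$), so it holds as a polynomial identity after clearing denominators, and may equally well be read as an equality in $\Z[t,t^{-1}]$.
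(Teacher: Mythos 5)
Your proof is correct and follows exactly the same route as the paper: factor the sum over the product structure of $\Pi\cap\Z^m$ using the additivity of $\sigma$, then evaluate each one-variable factor as a finite geometric series. Your remark about the removable singularity at $t=1$ is a harmless extra precision.
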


\begin{proof}
  Indeed,
  $$
  \sum_{y\in\Pi\cap\Z^m} t^{\sigma(y)}=\left(\sum_{y_1=\mu_1}^{\nu_1} t^{y_1}\right)
  \left(\sum_{y_2=\mu_2}^{\nu_2} t^{y_2}\right)\dots
  \left(\sum_{y_n=\mu_m}^{\nu_m} t^{y_m}\right).
  $$
  Each factor in the right hand side can be computed as a sum
  of a geometric series.
\end{proof}

The following is a duality property of $S_\Pi(t)$:

\begin{prop}
\label{P:dual}
  We have
  $$
  S_\Pi(t)=t^{\sum_{k=1}^m(\mu_k+\nu_k)}S_\Pi(t^{-1})
  $$
\end{prop}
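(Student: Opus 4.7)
The plan is to prove the duality via a geometric symmetry of the parallelepiped, which seems more conceptual than a direct manipulation of the product formula from Proposition \ref{P:Sformula}.

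First I would observe that $\Pi(\mu,\nu)$ is centrally symmetric about the point $c=(\mu+\nu)/2$, and that the involution $y\mapsto \mu+\nu-y$ preserves the integer lattice $\Z^m$ (since $\mu,\nu\in\Z^m$). Consequently, this involution restricts to a bijection of $\Pi(\mu,\nu)\cap\Z^m$ with itself. The key computation is then just the additivity of $\sigma$: for any $y$ we have
$$
\sigma(\mu+\nu-y)=\sigma(\mu)+\sigma(\nu)-\sigma(y)=\sum_{k=1}^m(\mu_k+\nu_k)-\sigma(y).
$$
Substituting $z=\mu+\nu-y$ in the defining sum yields
$$
S_\Pi(t)=\sum_{z\in\Pi\cap\Z^m}t^{\sigma(\mu)+\sigma(\nu)-\sigma(z)}=t^{\sum_{k=1}^m(\mu_k+\nu_k)}\sum_{z\in\Pi\cap\Z^m}t^{-\sigma(z)}=t^{\sum_{k=1}^m(\mu_k+\nu_k)}S_\Pi(t^{-1}),
$$
which is exactly the claim.

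If one prefers an algebraic proof, the identity can alternatively be read off directly from Proposition \ref{P:Sformula}: each factor $(t^{\nu_k+1}-t^{\mu_k})/(t-1)$ satisfies the one-variable version of the duality after multiplication by $t^{\mu_k+\nu_k}$ and substitution $t\mapsto t^{-1}$ (a short manipulation clearing denominators by $t^{\nu_k+1}$), and multiplying these factor-wise identities over $k=1,\dots,m$ recovers the full statement.

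There is no real obstacle here; the only thing to watch is that the symmetry $y\mapsto\mu+\nu-y$ is a \emph{lattice} involution, which requires $\mu+\nu\in\Z^m$, guaranteed by the integrality assumption on $\mu$ and $\nu$. The bijective proof is my preferred route because it explains the phenomenon (central symmetry of the parallelepiped) and generalizes to any centrally symmetric lattice polytope with vertices of integer coordinate sums, whereas the algebraic proof relies on the product structure specific to parallelepipeds.
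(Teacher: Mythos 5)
Your proof is correct. The paper itself offers no argument beyond the remark ``The proof is a straightforward computation,'' presumably meaning the algebraic verification from Proposition \ref{P:Sformula} that you sketch as your second route (each factor $\frac{t^{\nu_k+1}-t^{\mu_k}}{t-1}$ is invariant under $t\mapsto t^{-1}$ followed by multiplication by $t^{\mu_k+\nu_k}$, and the exponents add over $k$); that computation checks out. Your preferred bijective argument via the lattice involution $y\mapsto\mu+\nu-y$ is also valid and is in fact exactly the content of the combinatorial restatement the authors give immediately after the proposition (the equinumerosity of representations of $N$ and of $\sum_k(\mu_k+\nu_k)-N$), so it arguably explains the statement better than the computation does; your caveat that $\mu+\nu\in\Z^m$ is needed for the involution to preserve the lattice is the only hypothesis to watch, and it holds here by assumption.
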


The proof is a straightforward computation.
Proposition \ref{P:dual} can be restated in combinatorial terms as follows:
the number of ways to represent an integer $N$ as a sum $y_1+\dots+y_m$,
in which $\mu_k\le y_k\le \nu_k$ for all $k=1,\dots,m$, is equal to
the number of ways to represent the integer $\sum_{k=1}^m(\mu_k+\nu_k)-N$
in the same form.

Fix an integer $C$.
Consider the following linear operator on the space of Laurent
polynomials in $t$:
with every Laurent polynomial $f$, we associate the Laurent polynomial
$f^*$ obtained from $f$ by replacing every power $t^k$ with $t^{C-k}$.
In other terms, we have $f^*(t)=t^Cf(t^{-1})$.
Clearly, $f^{**}=f$ for every Laurent polynomial $f$.
The duality property of $S_\Pi$ can be restated as follows:
if $C=\sum_{k=1}^m(\mu_k+\nu_k)$, then $S_\Pi=S^*_\Pi$.
For the same value of $C$, define the operator $T$ by the formula
$$
T(f)=\frac{f-tf^*}{1-t}.
$$
It is not hard to see that, for every Laurent polynomial $f$, the function
$T(f)$ is also a Laurent polynomial.
The operator $T$ depends on the parallelepiped $\Pi$.

\begin{prop}
\label{P:SPi}
Let $\Gamma$ be the face of $\Pi=\Pi(\mu,\nu)$ given by the equation $y_1=\mu_1$
(it may coincide with $\Pi$ if $\mu_1=\nu_1$).
Then
$$
S_\Pi=T(S_\Gamma).
$$
\end{prop}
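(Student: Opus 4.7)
The statement is a closed-form identity between Laurent polynomials, so the plan is to prove it by direct manipulation of the product formula from Proposition \ref{P:Sformula}, avoiding any geometric detour. First I would clear the denominator in the definition of $T$ and rewrite the desired identity as
$$
(1-t)\,S_\Pi \;=\; S_\Gamma - t\, S_\Gamma^*,
$$
so that everything becomes a polynomial identity with respect to the variable $t$.

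The key intermediate step is to identify the Laurent polynomial $S_\Gamma^*$. Let $\Gamma'$ be the opposite face of $\Pi$, given by $y_1=\nu_1$. I claim that, with respect to the normalization $C=\sum_{k=1}^m(\mu_k+\nu_k)$ coming from $\Pi$, one has $S_\Gamma^*=S_{\Gamma'}$. This is a short computation: since $S_\Gamma(t)=t^{\mu_1}\prod_{k=2}^m \tfrac{t^{\nu_k+1}-t^{\mu_k}}{t-1}$ by Proposition \ref{P:Sformula}, substituting $t^{-1}$ for $t$ and multiplying each factor indexed by $k\ge 2$ by $t^{\mu_k+\nu_k}/t^{\mu_k+\nu_k}$ rewrites $S_\Gamma(t^{-1})$ as $t^{-\mu_1-\sum_{k\ge 2}(\mu_k+\nu_k)}\prod_{k=2}^m\tfrac{t^{\nu_k+1}-t^{\mu_k}}{t-1}$. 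Multiplying by $t^C$ converts the prefactor to $t^{\nu_1}$, yielding exactly the product formula for $S_{\Gamma'}$. (Alternatively, one can view this as an instance of Proposition \ref{P:dual} applied to a translated parallelepiped.)

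Given this, the identity reduces to a one-variable check on the $k=1$ factor. Using the product formula,
$$
(1-t)\,S_\Pi(t) \;=\; \bigl(t^{\mu_1}-t^{\nu_1+1}\bigr)\,\prod_{k=2}^m\frac{t^{\nu_k+1}-t^{\mu_k}}{t-1},
$$
while
$$
S_\Gamma(t) - t\,S_{\Gamma'}(t) \;=\; \bigl(t^{\mu_1}-t^{\nu_1+1}\bigr)\,\prod_{k=2}^m\frac{t^{\nu_k+1}-t^{\mu_k}}{t-1}.
$$
Comparing the two lines finishes the proof.

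The only step that requires genuine care is the identification $S_\Gamma^*=S_{\Gamma'}$, because $\Gamma$ is a degenerate parallelepiped (the first coordinate is frozen) while the duality constant $C$ is read off from the full parallelepiped $\Pi$; the prefactor $t^{\mu_1}$ must turn into $t^{\nu_1}$, and all other factors must come back unchanged, which is exactly what the algebraic manipulation above delivers.
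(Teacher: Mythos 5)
Your proposal is correct and follows essentially the same route as the paper: both arguments rest on the explicit product formula of Proposition \ref{P:Sformula} together with the duality of the factors indexed by $k\ge 2$ (the paper invokes Proposition \ref{P:dual} applied to $\Gamma$ to rewrite $t^{\nu_1+1-\mu_1}S_\Gamma$ as $tS_\Gamma^*$, while you equivalently compute $S_\Gamma^*=S_{\Gamma'}$ for the opposite facet and compare the cleared-denominator identity term by term). The identification $S_\Gamma^*=S_{\Gamma'}$ is a pleasant geometric gloss, but the computation is the same one the paper performs.
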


\begin{proof}
We have $\Gamma=\Pi(\mu_1,\mu_1,\mu_2,\nu_2,\dots,\mu_n,\nu_n)$.
Therefore, by Proposition \ref{P:Sformula},
$$
S_\Gamma(t)=t^{\mu_1}\prod_{k=2}^m\frac{t^{\nu_k+1}-t^{\mu_k}}{t-1}.
$$
and
$$
  S_\Pi(t)=\prod_{k=1}^m\frac{t^{\nu_k+1}-t^{\mu_k}}{t-1}=
  \frac{t^{\nu_1+1-\mu_1}S_\Gamma(t)-S_\Gamma(t)}{t-1}.
$$
Proposition \ref{P:dual} applied to $\Gamma$ gives that
$S_\Gamma(t)=t^{2\mu_1+\sum_{k=2}^m(\mu_k+\nu_k)}S_\Gamma(t^{-1})$.
Substituting this into the equation gives the desired result.
\end{proof}

Under certain assumptions, this proposition remains true if $\Pi$ and $\G$ are replaced
by their images under an embedding $\Pi\to\R^k$ that preserves the sum of coordinates.
\begin{prop}
  \label{P:Tgen}
  Consider a linear operator $\Lambda:\R^k\to\R^m$ defined over integers such that
  $\sigma\circ\Lambda=\sigma$ (the function $\sigma$ in the right hand side is the sum of
  all coordinate functions on $\R^k$).
  Let $\Pi$, $\Gamma$ and $T$ be as in Proposition \ref{P:SPi}.
  Assume that $\Lambda(B\cap\Z^k)=\Pi\cap\Z^m$ and $\Lambda(A\cap\Z^k)=\G\cap\Z^m$ for some
  subsets $A,B\subset\R^k$ such that the restrictions of
  $\Lambda$ to $B\cap\Z^k$ and $A\cap\Z^k$ are injective.
  Then
  $$
  \sum_{z\in B\cap\Z^k} t^{\sigma(z)}=
  T\left(\sum_{z\in A\cap\Z^k} t^{\sigma(z)}\right).
  $$
\end{prop}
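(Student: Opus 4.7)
The plan is to reduce this statement directly to Proposition \ref{P:SPi} by means of the bijections supplied by the hypotheses. The crucial observation is that, although $\Lambda$ need not be injective as a linear map, its restrictions to $B\cap\Z^k$ and $A\cap\Z^k$ are injective by assumption, so they set up bijections onto $\Pi\cap\Z^m$ and $\Gamma\cap\Z^m$ respectively.

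First I would rewrite the left-hand side. Using the bijection $\Lambda:B\cap\Z^k\to\Pi\cap\Z^m$ and the identity $\sigma\circ\Lambda=\sigma$, every $z\in B\cap\Z^k$ can be relabeled by its image $y=\Lambda(z)\in\Pi\cap\Z^m$, with $\sigma(z)=\sigma(\Lambda(z))=\sigma(y)$. This gives
$$
\sum_{z\in B\cap\Z^k}t^{\sigma(z)}=\sum_{y\in\Pi\cap\Z^m}t^{\sigma(y)}=S_\Pi(t).
$$
The identical argument applied to the pair $(A,\Gamma)$ yields
$$
\sum_{z\in A\cap\Z^k}t^{\sigma(z)}=\sum_{y\in\Gamma\cap\Z^m}t^{\sigma(y)}=S_\Gamma(t).
$$

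Finally I would invoke Proposition \ref{P:SPi}, which states $S_\Pi=T(S_\Gamma)$ (for the operator $T$ built from $\Pi$), and combine it with the two identities above to conclude
$$
\sum_{z\in B\cap\Z^k}t^{\sigma(z)}=S_\Pi(t)=T(S_\Gamma(t))=T\Bigl(\sum_{z\in A\cap\Z^k}t^{\sigma(z)}\Bigr).
$$

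There is no real obstacle here: the proposition is a clean corollary of Proposition \ref{P:SPi}, obtained by transporting the exponent sum along the fiber-preserving map $\Lambda$. The only subtle point is ensuring that the injectivity and surjectivity hypotheses on $\Lambda|_{B\cap\Z^k}$ and $\Lambda|_{A\cap\Z^k}$ are used exactly to justify the two changes of summation variable above; once those are in place, the identity $\sigma\circ\Lambda=\sigma$ makes the monomials match term by term, and the rest is a direct appeal to the previously proved proposition.
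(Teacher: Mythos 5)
Your proof is correct and follows essentially the same route as the paper: both arguments use the injectivity and surjectivity of $\Lambda$ on the integer points of $B$ and $A$, together with $\sigma\circ\Lambda=\sigma$, to identify the two exponential sums term by term with $S_\Pi$ and $S_\Gamma$, and then conclude by Proposition \ref{P:SPi}. No gaps.
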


\begin{proof}
  For every $z\in B\cap\Z^k$ set $y=\Lambda(z)$.
Since $\sigma(z)=\sigma(y)$, we obtain that
  $$
  \sum_{z\in B\cap\Z^k} t^{\sigma(z)}=\sum_{y\in\Pi\cap\Z^m} t^{\sigma(y)}
  $$
  (these two sums coincide term by term), and similarly for the right hand side.
  Thus the desired statement follows from Proposition \ref{P:SPi}.
\end{proof}

\subsection{Combinatorics of parallelepipeds}\label{ss:paramitosis}
Let $\Pi=\Pi(\mu,\nu)$ be a coordinate parallelepiped in $\R^m$ of dimension $m$,
so that $\mu_i<\nu_i$ for all $i=1,\dots,m$.
We will now discuss combinatorics of $\Pi$.
For every point $y\in\Pi$ with coordinates $(y_1,\dots,y_m)$, we can define the {\em paradiagram}
(``para'' from parallelepiped) of $x$ as the $m$-tuple $(\tilde y_1,\dots,\tilde y_m)$, where
\begin{itemize}
 \item $\tilde y_i=0$ if $y_i=\mu_i$,
\item $\tilde y_i=1$ if $y_i=\nu_i$, and
\item $\tilde y_i=*$ otherwise.
\end{itemize}
A paradiagram is called {\em reduced} if $1$ is never followed by $0$ in the paradiagram.

Consider a face $F$ of $\Pi$.
Note that all points in the relative interior of $F$ have the same paradiagram.
We will call this paradiagram the paradiagram of $F$.
A face $F$ is called reduced if so is its paradiagram.
Define a {\em parabox} as a sequence of consecutive positions in a paradiagram.
A parabox, filled with a string (possibly empty) of ones, followed by a single star,
followed by a string (possibly empty) of zeros, is called an {\em intron\footnote{The origin of this term is explained in \cite[Section 3.5]{KnM}} parabox}.
A parabox that contains the left end of a paradiagram and that is filled with
a string (possibly empty) of zeros is called an {\em initial parabox}.
A parabox that contains the right end of a paradiagram and that is filled with
a string (possibly empty) of ones is called a {\em final parabox}.
It is not hard to see that any reduced paradiagram consists of an initial parabox
followed by several (possibly zero) intron paraboxes, followed by a final parabox.
Below is an example of how to split a paradiagram into initial, intron and final paraboxes:

\medskip

\framebox[1.4cm]{0 0 0} \framebox[2.6cm]{1 1 1 $*$ 0 0} \framebox[1.4cm]{$*$ 0 0} \framebox[1.4cm]{1 1 $*$}
\framebox[0.6cm]{$*$} \framebox[1.4cm]{1 1 1}

\medskip

Two reduced faces $F_1$ and $F_2$ of $\Pi$ of the same dimension are said to be
related by an {\em $L$-move}, if their intersection is a non-reduced facet of
both $F_1$ and $F_2$.
We can also define an $L$-move of a reduced paradiagram.
This is the operation that replaces a single string $*0$ in a paradiagram with the string $1*$.
Note that an $L$-move does not affect the decomposition of a paradiagram into
initial, intron and final paraboxes.

\begin{prop}
 Two faces $F_1$ and $F_2$ of the same dimension are related by an $L$-move
if and only if their paradiagrams are related by an $L$-move.
\end{prop}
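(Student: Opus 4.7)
The proof reduces to a combinatorial statement about paradiagrams. Each face $F$ of $\Pi$ is determined by its paradiagram $p_F\in\{0,1,*\}^m$, with $\dim F$ equal to the number of $*$'s in $p_F$. If $F_1\cap F_2$ is nonempty, then its paradiagram agrees with $p_{F_1}$ and $p_{F_2}$ at each position where at least one of them is non-$*$ (the non-$*$ values being forced to match) and has a $*$ exactly at the positions where both paradiagrams have $*$. In particular, $\dim(F_1\cap F_2)$ equals the number of common $*$-positions.

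For the forward direction, assume $p_{F_1}$ and $p_{F_2}$ differ only at two consecutive positions $i,i+1$ where they read $*0$ and $1*$ respectively. Then $F_1$ and $F_2$ have equal dimension, and $p_{F_1\cap F_2}$ agrees with $p_{F_1}$ outside $\{i,i+1\}$ and reads $10$ on those two positions. Thus $F_1\cap F_2$ is a non-reduced codimension-one face of each of $F_1$ and $F_2$, realizing a face L-move.

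For the reverse direction, let $F_1,F_2$ be reduced of the same dimension $d$ with $F=F_1\cap F_2$ a non-reduced facet of both. A dimension count forces $p_{F_1}$ and $p_{F_2}$ to have exactly one $*$ that the other does not: $p_{F_1}(i)=*$ with $p_{F_2}(i)$ non-$*$ at some position $i$, and symmetrically $p_{F_2}(j)=*$ with $p_{F_1}(j)$ non-$*$ at some $j\ne i$; outside $\{i,j\}$ the three paradiagrams $p_{F_1},p_{F_2},p_F$ all coincide. Now pick consecutive positions $k,k+1$ where $p_F$ reads $10$. If $\{k,k+1\}$ is disjoint from $\{i,j\}$, both $p_{F_1}$ and $p_{F_2}$ inherit this $10$, contradicting their reducedness. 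If $\{k,k+1\}$ meets $\{i,j\}$ in exactly one element, then reading the values at $k$ and $k+1$ in whichever of $p_{F_1},p_{F_2}$ agrees with $p_F$ at both positions again produces a $10$, again contradicting reducedness. Hence $\{k,k+1\}=\{i,j\}$, so $i$ and $j$ are consecutive, and the entries at these positions in $p_{F_1}$ and $p_{F_2}$ are $*0$ and $1*$ in some order, exhibiting the desired paradiagram L-move. The only delicate step is the case analysis on where the $10$-pair of $p_F$ can sit relative to $\{i,j\}$, but each subcase is settled by a single position-by-position inspection, so no deeper idea is required.
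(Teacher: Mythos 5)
Your proof is correct and follows essentially the same line as the paper's: both arguments use the dimension count to locate the two positions where the star patterns of $F_1$ and $F_2$ differ, and then play the non-reducedness of the intersection against the reducedness of $F_1$ and $F_2$ to force these positions to be adjacent and to carry the $*0$ versus $1*$ pattern. You additionally write out the easy converse direction, which the paper leaves implicit.
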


\begin{proof}
Let $\delta_1$ be the paradiagram of $F_1$, and $\delta_2$ the paradiagram of $F_2$.
Since $F_1\cap F_2$ has codimension 1 in $F_1$, the paradiagram $\delta$ of
$F_1\cap F_2$ is obtained from $\delta_1$ by replacing one star with either 0 or 1.
Consider two cases.

{\em Case 1:} the star is replaced with $0$.
Then, since $F_1\cap F_2$ is non-reduced, there must be a $1$ immediately before this $0$.
Since $F_2$ is reduced, this $1$ must be replaced with a star in the paradiagram $\delta_2$.
Therefore, the paradiagram $\delta_1$ is obtained from $\delta_2$ by an $L$-move.

{\em Case 2:} the star is replaced with $1$.
Then, since $F_1\cap F_2$ is non-reduced, there must be a $0$ immediately after this $1$.
Since $F_2$ is reduced, this $0$ must be replaced with a star in the paradiagram $\delta_2$.
Therefore, the paradiagram $\delta_2$ is obtained from $\delta_1$ by an $L$-move.
\end{proof}

Two faces of the same dimension are said to be {\em $L$-equivalent} if one
of them can be obtained from the other by a sequence of $L$-moves or
inverse $L$-moves (on the level of paradiagrams, inverse $L$-moves are
defined as the inverse operations to $L$-moves).
For the sake of brevity, we will write $L$-classes instead of $L$-equivalence classes.
Throughout the rest of the subsection, we identify $L$-classes of faces and their unions
(clearly, an equivalence class can be easily recovered from its union).

\begin{prop}\label{p.cubic}
The $L$-classes form a simplicial cell complex combinatorially
equivalent to the standard simplex.
More precisely:
\begin{itemize}
\item any $L$-class is homeomorphic to a closed disk,
\item there is a one-to-one correspondence between $L$-classes
and the faces of a simplex such that corresponding sets are homeomorphic, and
intersections correspond to intersections.
\end{itemize}
\end{prop}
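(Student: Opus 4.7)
Proof plan.

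The plan is to bijectively associate to each $L$-class a non-empty subset $S \subseteq \{0,1,\ldots,m\}$, realize $L_S$ as a topological cube of dimension $|S|-1$, and verify that $L_S \cap L_T = L_{S \cap T}$. To a reduced paradiagram with block lengths $(e_0,e_1,\ldots,e_{k+1})$ (where $e_0, e_{k+1}\ge 0$ are the lengths of the initial and final paraboxes, each $e_i \ge 1$ for $1 \le i \le k$ is the length of an intron parabox, and $\sum_i e_i = m$), I associate the subset $S = \{e_0,\, e_0+e_1,\, \ldots,\, e_0+\cdots+e_k\} \subset \{0,1,\ldots,m\}$ of size $k+1$, recording the right endpoints of the intron paraboxes. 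An $L$-move replaces a substring $*0$ inside some intron parabox by $1*$ and therefore preserves the block decomposition, hence the subset $S$. Conversely, two reduced paradiagrams sharing the same block decomposition differ only by the positions of stars within their intron paraboxes and are connected by a sequence of $L$-moves. This yields a bijection between $L$-classes and non-empty subsets of $\{0,1,\ldots,m\}$, identifying the dimension of $L_S$ with $|S|-1$.

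Next I describe the set $L_S$ geometrically for $S = \{s_0 < s_1 < \cdots < s_k\}$ (with $s_{-1} := 0$). Coordinates in the initial block $\{1,\ldots,s_0\}$ and the final block $\{s_k+1,\ldots,m\}$ are fixed at $\mu_j$ and $\nu_j$ respectively. Within the $i$-th intron block $B_i = \{s_{i-1}+1,\ldots,s_i\}$, the faces in $L_S$ are segments $F_t$ indexed by $t \in B_i$, described by $y_j = \nu_j$ for $j \in B_i$, $j<t$, by $y_t \in [\mu_t,\nu_t]$, and by $y_j = \mu_j$ for $j \in B_i$, $j>t$. Consecutive segments $F_t$ and $F_{t+1}$ share a single vertex (with $y_j = \nu_j$ for $j \le t$ and $y_j = \mu_j$ for $j > t$ in $B_i$), while $F_t$ and $F_{t'}$ are disjoint for $|t-t'|\ge 2$ because a common point would have to satisfy $y_j = \mu_j$ and $y_j = \nu_j$ simultaneously for some $j$. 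Thus the restriction of $L_S$ to each intron is a path homeomorphic to $[0,1]$, and, since distinct introns involve disjoint coordinates, $L_S$ is homeomorphic to the $k$-cube $[0,1]^k$, a closed $k$-disk.

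It then remains to show $L_S \cap L_T = L_{S \cap T}$, with $L_{\emptyset}$ interpreted as empty. The inclusion $L_{S \cap T} \subseteq L_S \cap L_T$ follows because the block decomposition corresponding to $S \cap T$ is refined by each of those of $S$ and $T$: every face of $L_{S \cap T}$ sits inside some face of $L_S$ and some face of $L_T$ obtained by inserting the additional block boundaries. For the reverse inclusion, a point $p \in L_S \cap L_T$ has its coordinate at each position $j$ constrained jointly by both block structures; a position-by-position comparison shows that these joint constraints are consistent exactly when $j$ is treated compatibly by $S$ and $T$, and that the resulting constraints on $p$ are precisely those defining $L_{S \cap T}$. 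If $S \cap T = \emptyset$, the two block structures impose mutually inconsistent values on some coordinate, so $L_S \cap L_T = \emptyset$.

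Combining these steps, the $L$-classes form a cell decomposition of $\Pi$ whose face poset under inclusion of closures matches the poset of non-empty subsets of $\{0,1,\ldots,m\}$ ordered by inclusion, i.e., the face poset of the standard $m$-simplex; moreover, each cell $L_S$ is a closed disk of dimension $|S|-1$, matching the corresponding face of the simplex. This proves both bulleted assertions. The main obstacle is the intersection identity $L_S \cap L_T = L_{S \cap T}$, which rests on a careful per-coordinate analysis of how two overlapping block decompositions interact; the parameterization by subsets and the identification of each $L_S$ with a cube are then straightforward consequences of the anatomy of reduced paradiagrams.
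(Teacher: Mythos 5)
Your proposal is correct and follows essentially the same route as the paper: the paper indexes an $L$-class by the set of reduced vertices it contains (a reduced vertex $0^a1^{m-a}$ corresponding to your boundary position $a$), identifies each class with a product of broken lines over its intron paraboxes (hence a cube), and reads off the intersection property from the vertex sets, exactly as you do with your subsets $S\subseteq\{0,1,\dots,m\}$. The only cosmetic difference is the labelling by block boundaries rather than by reduced vertices, and your intersection step is left at the same level of detail as the paper's.
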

Figure 4 illustrates the proposition for $m=3$.
\begin{figure}
\includegraphics[width=6cm]{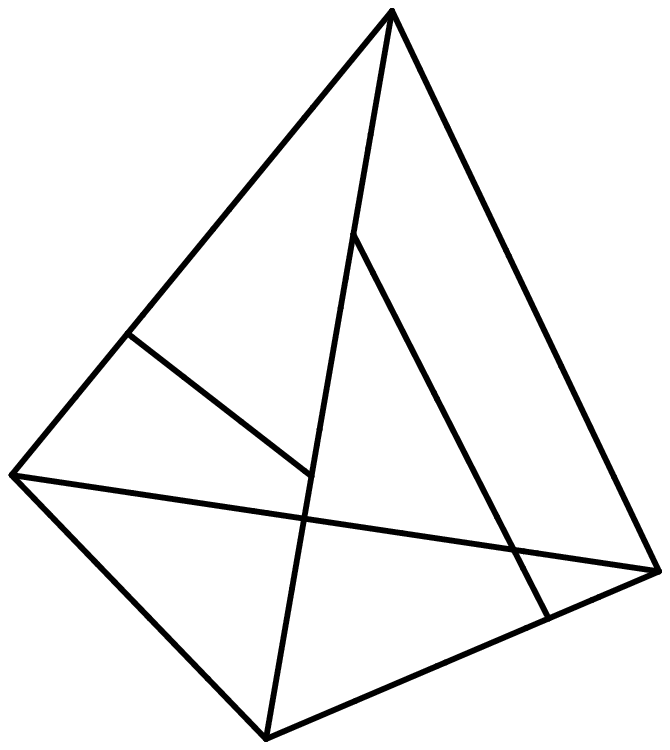}\qquad
\includegraphics[width=6cm]{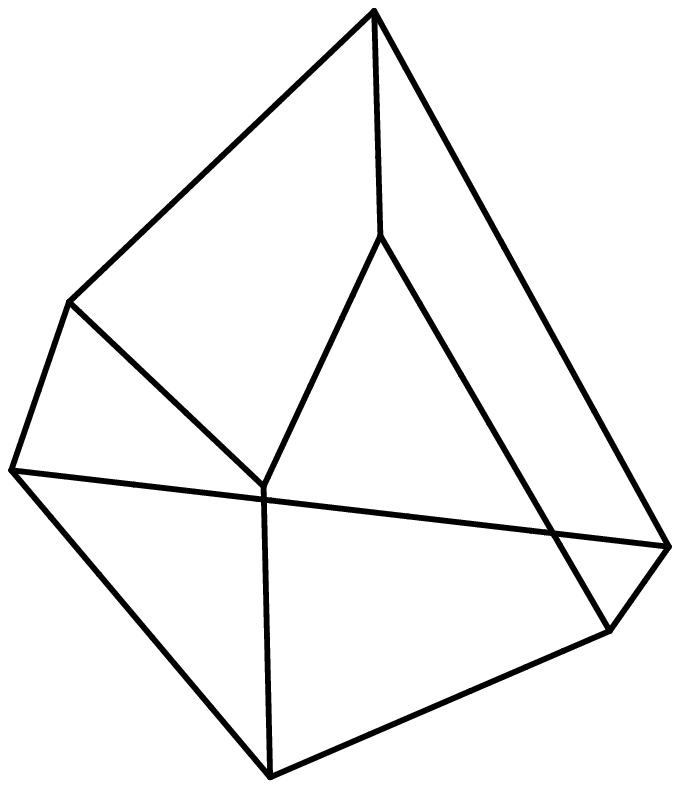}
\caption{The subdivision of the tetrahedron by two extra edges yields a
combinatorial cube}
\end{figure}

\begin{proof}
 First consider all reduced vertices.
There are exactly $m+1$ of them.
The paradiagram of a reduced vertex consists of a string of zeros followed by a string of ones.
Note that different reduced vertices are never $L$-equivalent.

Next, consider any $L$-class $A$ of dimension $k$. It has $k$ intron paraboxes.
To the $L$-class $A$, we assign a set $v(A)$ of $k+1$ vertices in the
following way: fill first $i\le k$ intron paraboxes with zeros, and the remaining
intron paraboxes with ones.
Clearly, the set $v(A)$ is precisely the set of all reduced vertices contained in the class $A$.
It follows that $v(A\cap B)=v(A)\cap v(B)$ for any two classes $A$ and $B$.
Note that $A$ is determined by the positions and sizes
of initial, intron and final paraboxes, that is, by the set $v(A)$.
The set $v(A)$ spans a face of the simplex $\Sigma$ with vertices at all reduced vertices of $\Pi$.
Thus we have an injective map $v$ from $L$-classes of faces of $\Pi$ to faces of $\Sigma$;
this map takes intersections to intersections.

The map $v$ is surjective: any set of reduced vertices has the form $v(A)$ for some equivalence class $A$.
Indeed, $A$ can be defined as the class, in which the boundaries of intron paraboxes are
the boundaries between zeros and ones in the vertices from the given set.

It remains to prove that any $L$-class is homeomorphic to a closed disk.
First note that an $L$-class with only one intron parabox is a broken line,
whose straight line segments are parallel to coordinate axes (every straight line
segment of this broken line corresponds to a particular position of the star
inside the intron parabox).
A broken line is homeomorphic to the interval.
In general, an $L$-class is a direct product of broken lines as above,
hence it is homeomorphic to a direct product of intervals, i.e. to a closed cube.
\end{proof}

The most important corollary for us is that the intersection of two
$L$-classes is again an $L$-class.

We can now define {\em paramitosis}.
This is an operation that produces several faces out of a single face $F$.
If the paradiagram of $F$ has no initial parabox, then the paramitosis of $F$ is empty.
Suppose now that the paradiagram of $F$ has a nonempty initial parabox.
Then we replace it with an intron parabox: the set of all faces obtained in
this way (corresponding to all different ways of filling the new intron parabox)
is the paramitosis of $F$. Below is an example of paramitosis:
\medskip

\framebox[1.4cm]{0 0 $*$ 0}\quad
$
\xymatrixcolsep{5pc}
\xymatrix{ \ar[r]^{\hbox{paramitosis}} &  }$
\quad
\framebox[1.4cm]{$*$ 0 $*$ 0}\quad \hbox{and}\quad \framebox[1.4cm]{1 $*$ $*$ 0}

\medskip
The paramitosis of a set of faces is defined as the union of paramitoses
of all faces in this set.

\begin{remark} \label{r.simposis}
It is easy to describe the paramitosis of an $L$-class using the bijection between
$L$-classes and faces of the standard simplex defined in Proposition \ref{p.cubic}.
Namely, the $L$-classes with non-empty initial paraboxes correspond to the faces
of the simplex contained in a facet $H$.
Let $v$ be the vertex of the simplex that is not contained in $H$.
Then the paramitosis of the face $A\subset H$  coincides with the convex hull of $A$ and $v$.
It follows that paramitosis of an $L$-class is again an $L$-class and that paramitosis
of the intersection of two $L$-classes with non-empty initial paraboxes coincides with
the intersection of their paramitoses.

\end{remark}

For a subset $A\subset\Pi$, we define the Laurent polynomial $\Sc(A)=\sum_{y\in A\cap\Z^m} t^{\sigma(y)}$.

\begin{prop}
\label{P:paramitosis}
Let $T$ be the operator associated with $\Pi$ as in Subsection \ref{ss:para},
the function $\sigma:\R^m\to\R$ the sum of all coordinates, and $A$ an $L$-class of
faces in $\Pi$ with a non-empty initial parabox.
Let $B$ be the paramitosis of $A$.
Then $\Sc(B)=T\Sc(A)$.
\end{prop}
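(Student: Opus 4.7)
The strategy is to apply Proposition~\ref{P:Tgen} so as to reduce the statement to Proposition~\ref{P:SPi} for an auxiliary parallelepiped built from the intron data of $A$.

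First I would exploit the product structure of the $L$-class $A$ supplied by Proposition~\ref{p.cubic}. A face in $A$ has paradiagram of the form initial parabox, then intron paraboxes $I_1,\dots,I_\ell$, then final parabox, and $A$ is the corresponding product: the initial and final paraboxes contribute single lattice points (with coordinate sums $M_{init}$ and $N_{final}$), while each intron parabox $I_j$ contributes its staircase broken line $L_j$. The lattice points of $A$ therefore factor as a Cartesian product. A direct check shows that along $L_j$ the coordinate sum $\sigma$ increases strictly from $M_{I_j}:=\sum_{i\in I_j}\mu_i$ to $N_{I_j}:=\sum_{i\in I_j}\nu_i$, taking each intermediate integer value exactly once; thus the lattice points of $L_j$ are in $\sigma$-preserving bijection with those of the interval $[M_{I_j},N_{I_j}]$. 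The paramitosis $B$ has the same structure as $A$ except that the pinned initial parabox becomes a new intron parabox $I_0$ of size $s$, contributing a staircase $L_0$ in place of the pinned lattice point.

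Next I would consolidate these bijections. Set $\tilde\Pi:=\prod_{j=0}^{\ell}[M_{I_j},N_{I_j}]$, augmented by a degenerate coordinate fixed at $N_{final}$ to absorb the final-parabox contribution to $\sigma$, and let $\tilde\Gamma$ be its facet $\{M_{I_0}\}\times\prod_{j\ge1}[M_{I_j},N_{I_j}]$. The staircase-to-interval bijections should assemble into a linear, $\sigma$-preserving map $\Lambda$ from the ambient space of $\tilde\Pi$ into $\R^m$ that carries the lattice points of $\tilde\Gamma$ onto those of $A$ and the lattice points of $\tilde\Pi$ onto those of $B$.

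Finally, Proposition~\ref{P:SPi} applied to $\tilde\Pi$ and $\tilde\Gamma$ yields $S_{\tilde\Pi}=\tilde T(S_{\tilde\Gamma})$, where $\tilde T$ is the operator attached to $\tilde\Pi$, and Proposition~\ref{P:Tgen} transports this identity through $\Lambda$ to $\Sc(B)=T\Sc(A)$, the compatibility of $\tilde T$ with $T$ being forced by the fact that $\Lambda$ preserves $\sigma$. The main obstacle will be the construction of the linear map $\Lambda$ in the previous step: the underlying identification of a staircase with an interval is piecewise linear, so $\Lambda$ cannot simply collapse each intron parabox to a single scalar. The fix is to choose a higher-dimensional source in which the staircase appears as a genuine facet of a parallelepiped and then to project along a $\sigma$-preserving direction; this is precisely the situation for which Proposition~\ref{P:Tgen}, rather than Proposition~\ref{P:SPi} itself, is the appropriate tool.
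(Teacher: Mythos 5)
Your overall strategy---an auxiliary parallelepiped whose coordinates record the sums of the original coordinates over the paraboxes, with $\tilde\Gamma$ the facet corresponding to the pinned initial parabox, fed into Propositions \ref{P:SPi} and \ref{P:Tgen}---is exactly the paper's proof. The one thing you have backwards is the direction of the map, and that is precisely what manufactures your ``main obstacle.'' Proposition \ref{P:Tgen} is stated with the complicated sets $A,B$ in the \emph{source} and the parallelepiped and its facet in the \emph{target}; so the map you need is not a parametrization from the ambient space of $\tilde\Pi$ into $\R^m$ (which indeed cannot be linear, since the interval-to-staircase correspondence is only piecewise linear in that direction), but the collapsing map $\Lambda_F:\R^m\to\R^r$ sending $y$ to the $r$-tuple of sums of the $y_j$ over the paraboxes of $A$. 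That map \emph{is} linear, satisfies $\sigma\circ\Lambda_F=\sigma$, and restricts to a bijection from the lattice points of each staircase onto those of the corresponding interval $[M_{I},N_{I}]$ (consecutive star positions contribute abutting subintervals sharing one endpoint, which is exactly the common lattice point of the two adjacent edges); hence it carries $A\cap\Z^m$ bijectively onto $\tilde\Gamma\cap\Z^r$ and $B\cap\Z^m$ onto $\tilde\Pi\cap\Z^r$, and Proposition \ref{P:Tgen} applies verbatim. The only remaining check, which you should make explicit, is that the operator $\tilde T$ attached to $\tilde\Pi$ coincides with the operator $T$ attached to $\Pi$: this holds because the paraboxes partition $\{1,\dots,m\}$, so the constant $\sum_l(M_{I_l}+N_{I_l})$ equals $\sum_{k=1}^m(\mu_k+\nu_k)$. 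With the arrow reversed there is no obstacle left and no higher-dimensional source is needed.
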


\begin{proof}
Consider the paradiagram of a face from $A$.
Suppose that this paradiagram has $r$ paraboxes total, and the $l$-th parabox starts with
index $j_l$ (so that $j_1=1$).
Consider the following linear map $\Lambda_F:\R^m\to\R^r$:
$$
\Lambda_F(y_1,\dots,y_m)=\left(\sum_{j=j_1}^{j_2-1}y_j,\sum_{j=j_2}^{j_3-1}y_j,\dots,\sum_{j=j_r}^{m}y_j\right)
$$
We have $\sigma\circ \Lambda_F=\sigma$, where $\sigma$ is the function computing the sum of
all coordinates.
We can now apply Proposition \ref{P:Tgen} to the map $\Lambda_F$.
\end{proof}

A similar statement holds for unions of $L$-classes:
\begin{prop}\label{P:nonempty}
 Let $A_1$, $\dots$, $A_k$ be $L$-classes with nonempty initial paraboxes,
and suppose that the $L$-classes $B_1$, $\dots$, $B_k$ are obtained from  $A_1$, $\dots$, $A_k$
by paramitosis. Then we have
$$
\Sc(B_1\cup\dots\cup B_k)=T\Sc(A_1\cup\dots\cup A_k)=T\Sc(B_1\cup\dots\cup B_k).
$$
\end{prop}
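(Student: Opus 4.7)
The plan is to reduce Proposition \ref{P:nonempty} to Proposition \ref{P:paramitosis} via an inclusion--exclusion on intersections of $L$-classes. Since $T$ is linear and $\Sc$ is additive, inclusion--exclusion gives
$$
\Sc(A_1\cup\dots\cup A_k)=\sum_{\emptyset\ne S\subseteq\{1,\dots,k\}}(-1)^{|S|+1}\Sc\left(\bigcap_{i\in S}A_i\right),
$$
and an analogous formula for the $B_i$'s. So the first equality amounts to showing that $\Sc(\bigcap_{i\in S} B_i) = T\,\Sc(\bigcap_{i\in S} A_i)$ for each nonempty $S$.

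Two structural inputs make this term-by-term application of Proposition \ref{P:paramitosis} legal. First, by Proposition \ref{p.cubic} together with Remark \ref{r.simposis}, the intersection of $L$-classes with non-empty initial paraboxes is again an $L$-class with non-empty initial parabox (or is empty): under the simplex correspondence, such $L$-classes are precisely the faces of the facet $H\subset\Sigma$ opposite the all-ones vertex, and intersections go to intersections, which remain inside $H$. Second, Remark \ref{r.simposis} asserts that paramitosis commutes with pairwise intersection of such $L$-classes, and an immediate induction extends this to $k$-fold intersections; hence $\bigcap_{i\in S} B_i$ is exactly the paramitosis of $\bigcap_{i\in S} A_i$. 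Proposition \ref{P:paramitosis} then yields the desired term-by-term identity (empty intersections are trivial since both sides vanish), and summing with signs while using linearity of $T$ delivers $\Sc(B_1\cup\dots\cup B_k)=T\Sc(A_1\cup\dots\cup A_k)$.

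For the second equality I will show that $T$ is an idempotent operator, so that once $\Sc(B_1\cup\dots\cup B_k)$ is in the image of $T$ (as just established), applying $T$ again leaves it unchanged. This requires only a short algebraic verification: using $f^*(t)=t^Cf(t^{-1})$ and $(f^*)^*=f$, a direct expansion shows $(Tf)^*=Tf$ for every Laurent polynomial $f$; and clearly $Tg=(g-tg)/(1-t)=g$ whenever $g=g^*$. Together these give $T^2=T$, so
$$
T\Sc(B_1\cup\dots\cup B_k)=T^2\Sc(A_1\cup\dots\cup A_k)=T\Sc(A_1\cup\dots\cup A_k),
$$
which is the second equality.

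The main point that needs care is the combinatorial bookkeeping in the second paragraph -- that intersections of $L$-classes with non-empty initial paraboxes stay within the same ``half'' of the simplicial complex and that paramitosis commutes with multiple intersections. Both facts rest squarely on the simplex model supplied by Proposition \ref{p.cubic} and Remark \ref{r.simposis}; once this is in place, the rest is formal linear algebra and a one-line check that $T$ is a projection onto the self-dual Laurent polynomials. I do not anticipate any serious obstacle beyond this routine verification.
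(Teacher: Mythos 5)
Your argument is the same as the paper's: inclusion--exclusion over the index sets, reduction of the first equality to the term-by-term identity $\Sc(B_I)=T\Sc(A_I)$ via Proposition \ref{P:paramitosis}, and idempotency of $T$ for the second equality. Your explicit check that $(Tf)^*=Tf$ and $Tg=g$ whenever $g=g^*$, hence $T\circ T=T$, is correct and supplies a computation the paper only asserts; likewise your appeal to Proposition \ref{p.cubic} and Remark \ref{r.simposis} for ``intersections of $L$-classes with nonempty initial parabox are again such $L$-classes, and paramitosis commutes with intersection'' is exactly what the paper relies on.

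The one statement that does not hold up is the parenthetical ``empty intersections are trivial since both sides vanish.'' If $A_I=\varnothing$ then indeed $T\Sc(A_I)=0$, but $B_I=\bigcap_{i\in I}M(A_i)$ need not be empty: by Remark \ref{r.simposis} each $M(A_i)$ is the join of $A_i$ with the cone vertex $v$ (the reduced vertex of $\Pi$ whose paradiagram is all ones, i.e.\ the point $\nu$), so every $B_i$ contains $v$ and $B_I\supseteq\{v\}$, giving $\Sc(B_I)=t^{\sigma(\nu)}\neq 0$. Disjoint $A_i$'s are allowed by the hypotheses --- already in $\Pi=[0,1]^2$ the two reduced vertices with paradiagrams $00$ and $01$ are disjoint $L$-classes with nonempty initial paraboxes, while their paramitoses (the broken line through $(0,0),(1,0),(1,1)$ and the edge from $(0,1)$ to $(1,1)$) meet at $(1,1)$. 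So the term-by-term identity genuinely fails for such $I$, and the ``commutes with intersection'' statement of Remark \ref{r.simposis} is only valid when the intersection is nonempty. To be fair, the paper's own proof elides exactly the same point (it silently treats every $A_I$ as a nonempty $L$-class), so you have reproduced rather than introduced the gap; but your explicit dismissal of the empty case is false as written. A complete argument must either restrict to configurations in which all the relevant $A_I$ are nonempty, or keep track of the surplus contributions of the vertex $v$ in the alternating sum and show that they cancel.
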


\begin{proof}
We will use the inclusion-exclusion formula:
$$
\Sc(A_1\cup\dots\cup A_k)=\sum_{I\ne\emptyset}(-1)^{|I|-1}\Sc(A_I),
$$
where the summation is over all nonempty subsets $I\subset\{1,\dots,k\}$, and
$A_I$ is the intersection of all $A_i$, $i\in I$.
The same formula holds for $B_I$, and $T$ is linear, hence it suffices to show that
$\Sc(B_I)=T\Sc(A_I)$.
But $A_I$ is also an $L$-class with nonempty initial parabox.
Thus the first equality follows from Proposition \ref{P:paramitosis}.

The second equality follows from the first one since $T\circ T=T$.
\end{proof}

Let $M(A)$ denote the paramitosis of $A$.
\begin{prop}
\label{P:paramitosis1}
Suppose that $A_1$, $\dots$, $A_k$ are $L$-classes with nonempty initial parabox,
and $B_1$, $\dots$, $B_{r}$ are $L$-classes with empty initial parabox.
Suppose also that for every $i\in\{1,\dots,r\}$, we have
$B_{i}=M((A_1\cup\dots\cup A_k)\cap B_{i})$.
Then
$$
\Sc M(A_1\cup\dots\cup A_k\cup B_1\cup\dots\cup B_{r})=T\Sc(A_1\cup\dots\cup A_k\cup B_1\cup\dots\cup B_{r}).
$$
\end{prop}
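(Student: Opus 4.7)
The plan is to introduce the notation $X=A_1\cup\dots\cup A_k$ and $Y=B_1\cup\dots\cup B_r$, and to deduce the proposition from Proposition \ref{P:nonempty} together with the idempotency $T\circ T=T$ that was already observed in its proof, once the single set-theoretic identity $M(X\cap Y)=Y$ has been established.

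First I would rewrite both sides of the desired equality. Since each $B_i$ has empty initial parabox, its paramitosis is empty by definition, so $M(X\cup Y)=M(X)\cup M(Y)=M(X)$, and Proposition \ref{P:nonempty} applied to the classes $A_1,\dots,A_k$ gives $\Sc M(X\cup Y)=T\Sc(X)$. On the right-hand side, inclusion-exclusion and linearity of $T$ yield
$$T\Sc(X\cup Y)=T\Sc(X)+T\Sc(Y)-T\Sc(X\cap Y),$$
so matters reduce to proving the single identity $T\Sc(Y)=T\Sc(X\cap Y)$.

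The heart of the argument is the set-theoretic equality $Y=M(X\cap Y)$. Distributing intersection over union, $X\cap Y=\bigcup_{i,j}(A_j\cap B_i)$; each $A_j\cap B_i$ is again an $L$-class (intersections of $L$-classes are $L$-classes by Proposition \ref{p.cubic}) with non-empty initial parabox, since the distinguished vertex $v$ opposite the facet $H$ (see Remark \ref{r.simposis}) is excluded from $v(A_j)$ and therefore from $v(A_j\cap B_i)=v(A_j)\cap v(B_i)$. Applying the hypothesis $B_i=M(X\cap B_i)=\bigcup_j M(A_j\cap B_i)$ and taking the union over $i$ gives $Y=\bigcup_{i,j}M(A_j\cap B_i)=M(X\cap Y)$. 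Proposition \ref{P:nonempty}, now applied to the $L$-classes $A_j\cap B_i$ (all with non-empty initial parabox), yields $\Sc(Y)=\Sc M(X\cap Y)=T\Sc(X\cap Y)$; applying $T$ to both sides and invoking $T\circ T=T$ produces the required identity $T\Sc(Y)=T\Sc(X\cap Y)$.

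I expect the main obstacle to be locating and verifying the set-theoretic identification $Y=M(X\cap Y)$, which is what the seemingly technical hypothesis $B_i=M((A_1\cup\dots\cup A_k)\cap B_i)$ is tailored to produce. Once this identification is in place, every remaining step is either a formal consequence of inclusion-exclusion, a direct invocation of Proposition \ref{P:nonempty}, or the idempotency of $T$.
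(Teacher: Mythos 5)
Your proposal is correct and follows essentially the same route as the paper: both reduce the claim via inclusion--exclusion to the identity $T\Sc(B_1\cup\dots\cup B_r)=T\Sc\bigl((A_1\cup\dots\cup A_k)\cap(B_1\cup\dots\cup B_r)\bigr)$, which is extracted from the hypothesis $B_i=M((A_1\cup\dots\cup A_k)\cap B_i)$ together with Proposition~\ref{P:nonempty} and $T\circ T=T$. Your only (harmless) refinement is to decompose each $(A_1\cup\dots\cup A_k)\cap B_i$ explicitly into the $L$-classes $A_j\cap B_i$ and check their initial paraboxes are nonempty before invoking Proposition~\ref{P:nonempty}, a point the paper passes over by applying that proposition directly to the unions $A_i'$.
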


\begin{proof}
By the inclusion-exclusion formula, we have for the right-hand side ($RHS)$:
$$
RHS=T\Sc(A_1\cup\dots\cup A_k\cup B_1\cup\dots\cup B_{r})=T\Sc(A_1\cup\dots\cup A_k)+$$
$$+T\Sc(B_1\cup\dots\cup B_{r})-T\Sc((A_1\cup\dots\cup A_k)\cap(B_1\cup\dots\cup B_{r}))
$$
Put $A_i'=(A_1\cup\dots\cup A_k)\cap B_{i}$ for every $i\in\{1,\dots,r\}$. Since $B_{i}=M(A_i')$, we have that $T\Sc(B_1\cup\dots\cup B_{r})=T\Sc(A'_1\cup\dots\cup A'_r)$ by the second equality of Proposition \ref{P:nonempty}. Hence,
$$T\Sc(B_1\cup\dots\cup B_{r})=T\Sc((A_1\cup\dots\cup A_k)\cap(B_1\cup\dots\cup B_{r})),$$
and $RHS=T\Sc(A_1\cup\dots\cup A_k)$.

It remains to note that the left-hand side coincides with $\Sc M(A_1\cup\dots\cup A_k)$ because $M(B_1\cup\dots\cup B_{r})$ is empty. The desired statement now follows from the first equality of Proposition \ref{P:nonempty}.
\end{proof}
\begin{remark}\label{r:paramitosis}
Note that the condition $B=M((A_1\cup\dots\cup A_k)\cap B)$ in Lemma \ref{P:paramitosis1} is satisfied
whenever $B=M(A)$ for an $L$-class $A\subset A_1\cup\dots\cup A_k$.
Indeed, if $B=M(A)$ then  by definition of paramitosis $A=H\cap B$, where $H$ is the hyperplane $y_1=\mu_1$.
Since $H$ contains all $A_i$, we always have the inclusion $(A_1\cup\dots\cup A_k)\cap B \subset H\cap B$.
On the other hand, the condition $A\subset A_1\cup\dots\cup A_k$ implies the opposite  inclusion
$H\cap B\subset (A_1\cup\dots\cup A_k)\cap B$.
\end{remark}

\subsection{Fiber diagrams, ladder moves and proof of Key Lemma \ref{T:comb}} \label{ss.keylemma}
We now apply the general results for parallelepipeds to
mitosis on faces of the Gelfand--Zetlin polytopes $P_\lambda$.
Fix some $i$.
We will now consider mirror mitosis in the $i$-th row (in what follows, by
mitosis, we always mean mirror mitosis).
Define the linear projection $q_i:\R^d\to \R^{d-(n-i)}$ forgetting
all entries in the $i$-th row, i.e. forgetting the values of
all coordinates $\lambda_{i,j}$ with first index $i$.
Define the {\em fibers of $P_\l$} as the fibers of this projection
restricted to the Gelfand--Zetlin polytope $P_\l$.

Fix the values of all coordinates $\l_{i',j}$ with $i'\ne i$.
This defines a fiber of $P_\l$.
The fiber can be given in coordinates $y_j=\l_{i,j}$ by
the following inequalities:
$$
\begin{array}{ccccccccc}
\l_{i-1,1}&   &\l_{i-1,2}&   &\l_{i-1,3}&       &\ldots&   &\l_{i-1,n-i+1}\\
     &y_1&     &y_2&     &\ldots &      &y_{n-i}&         \\
     &   &\l_{i+1,1}&   &\ldots&      &\l_{i+1,n-i-1}&  &       \\
\end{array}.
$$
Set $\mu'_j=\max(\l_{i-1,j},\l_{i+1,j-1})$ and
$\nu'_j=\min(\l_{i-1,j+1},\l_{i+1,j})$, where
$\l_{i+1,0}=-\infty$ (or sufficiently large negative number)
and $\l_{i+1,n-i}=+\infty$ (or sufficiently large positive number).
Therefore, the fiber can be identified with the coordinate parallelepiped
$\Pi(\mu',\nu')\subset\R^{n-i}$.

Let $F$ be any reduced Kogan face of $P_\l$.
We define the fiber of the face $F$ as the intersection of $F$ with the fiber of $P_\l$.
It will be convenient to represent the fiber of $F$ by the $i$-th {\em fiber diagram}
of $F$, i.e. by the restriction of the face diagram of $F$ to the union of rows
$i-1$, $i$, $i+1$.
Note that the mitosis in the $i$-th row can be seen on the level of
the fiber diagram --- it does not change other parts of the face diagram.
With the fiber diagram of every Kogan face, we can associate a paradiagram of a face of $\Pi(\mu',\nu')$ as follows.
The fiber of every Kogan face is a face of the parallelepiped $\Pi(\mu',\nu')$,
and we take the paradiagram of this face (note that the length of such paradiagram, which is equal
to the dimension of $\Pi(\mu',\nu')$, may be strictly less than $n-i$).
It is easy to check that the paradiagram of a reduced Kogan face is also reduced,
and that the mirror mitosis on the level of fiber diagrams coincides with the
paramitosis on the associated paradiagrams.

For the convenience of the reader , we will now recall the definition of {\em ladder-move} of \cite{BB} in
the language of reduced Kogan faces.
Consider rows $i-1$, $i$ and $i+1$ in the face diagram of $F$.
Define a {\em diagonal} as a collection of 3 dots in rows $i-1$, $i$ and $i+1$
that are aligned in the direction from northwest to southeast, together
with all connecting segments between these 3 dots that belong to the face diagram.
Diagonals can be of four possible types: $(0,0)$, $(0,1)$, $(1,0)$ and $(1,1)$.
The first entry is one if the diagonal contains an interval connecting rows $i-1$ and $i$,
otherwise the first entry is zero.
The second entry is one if the diagonal contains an interval connecting rows $i$ and $i+1$,
otherwise the second entry is zero:

\medskip

\centerline{
\includegraphics[width=1.5cm]{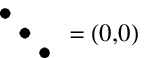}\qquad
\includegraphics[width=1.5cm]{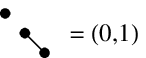}\qquad
\includegraphics[width=1.5cm]{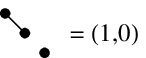}\qquad
\includegraphics[width=1.5cm]{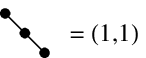}
}

\medskip

The correspondence between fiber diagrams and paradiagrams can now be described in combinatorial terms as follows:
diagonals of type $(0,0)$,  $(0,1)$, $(1,0)$ are replaced by $*$, $1$, $0$, respectively, and  diagonals of type
$(1,1)$ are ignored (each such diagonal decreases by one the dimension of the parallelepiped $\Pi(\mu',\nu')$,
that is, the length of the paradiagram).
For instance, the 1st fiber diagram of the upper face on Figure 3 yields the paradiagram  \framebox[1.4cm]{0 0 $*$ 0}.

Define a {\em box} as any sequence of consecutive diagonals in a fiber diagram.
In our pictures, a box will look like a parallelogram with angles $45^\circ$ and $135^\circ$.
By definition, a {\em ladder-movable box} is a box, whose first (left-most) diagonal
is of type (0,0), which follows by any number of type $(1,1)$ diagonals, and, finally,
by a single diagonal of type $(1,0)$.
Symbolically, we represent such a box as a sum $(0,0)+k(1,1)+(1,0)$, where $k$
is the number of type $(1,1)$ diagonals.
The {\em ladder-move} of \cite{BB} makes this ladder-movable box into the box $(0,1)+k(1,1)+(0,0)$:

\medskip

\centerline{
\includegraphics[width=10cm]{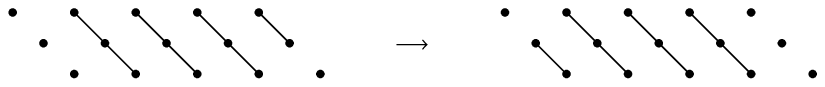}
}

\medskip

Note that ladder moves do not change the permutation associated with a face.
Moreover, they take reduced faces to reduced faces. Finally, note that under the correspondence
between fiber diagrams and paradiagrams, the ladder-moves are exactly the $L$-moves of the previous subsection.

We are now ready to prove Key Lemma \ref{T:comb}.
Denote by $\G''$ the set $\bigcup_{w(F)=w}F$ and by $\Pi''$ the union of all faces that
can be obtained from faces in $\G''$ by the mirror mitosis in the $i$-th row.
These are sets considered in Lemma \ref{T:comb}, and to prove the lemma we have to show that
$$
T_i^-(\chi(\G''))=\chi(\Pi'').
 $$

Let $\G'$ and $\Pi'$ denote fibers of $\G''$ and $\Pi''$, respectively,
at the $i$-th row.
Then Lemma \ref{T:comb} can be deduced from

\begin{lemma}
\label{L:fiber1}
Let $T$ be the operator associated with the coordinate parallelepiped $\Pi(\mu',\nu')$ as
in Subsection \ref{ss:para}. Identifying $\G'$ and $\Pi'$ with the subsets of
$\Pi(\mu',\nu')$, we get
$$
\sum_{y\in \Pi'\cap\Z^{n-i}} t^{\sigma(y)}=
  T\left(\sum_{y\in \G'\cap\Z^{n-i}} t^{\sigma(y)}\right).
$$
\end{lemma}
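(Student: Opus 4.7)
The plan is to translate Lemma~\ref{L:fiber1} into a statement in the parallelepiped $\Pi(\mu',\nu')$ via the fiber-diagram-to-paradiagram correspondence reviewed above, and then apply Proposition~\ref{P:paramitosis1}.

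I first identify $\G',\Pi' \subset \Pi(\mu',\nu')$ using that correspondence and observe that both are unions of $L$-classes. This is because ladder moves on fiber diagrams preserve the permutation $w(F)$ (Bergeron--Billey) and coincide with $L$-moves on paradiagrams, so any two $L$-equivalent faces arising as fibers of reduced Kogan faces in $\G''$ correspond to Kogan faces with the same permutation. Writing $\G'$ as a union of $L$-classes $A_1,\dots,A_k$ (those with non-empty initial parabox) together with $L$-classes $B_1,\dots,B_r$ (those with empty initial parabox), the equality between mirror mitosis and paramitosis at the fiber level yields $\Pi' = M(\G') = \bigcup_j M(A_j)$, since $M(B_\alpha) = \emptyset$ for each $\alpha$.

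Next, Proposition~\ref{P:paramitosis1} yields exactly Lemma~\ref{L:fiber1}, once its hypothesis $B_\alpha = M\bigl((\bigcup_j A_j) \cap B_\alpha\bigr)$ is verified for every $\alpha$. By Remark~\ref{r:paramitosis}, it is enough to exhibit, for each $B_\alpha$, an $L$-class $A \subset \bigcup_j A_j$ satisfying $M(A) = B_\alpha$.

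The main obstacle is precisely this combinatorial closure claim, which in the simplex picture of Proposition~\ref{p.cubic} says that the opposite face $B_\alpha \cap H$ of the distinguished vertex $v$ inside $B_\alpha$ is itself an $L$-class of $\G'$. In Kogan-face language: given $F \in \G''$ whose row-$i$ fiber lies in $B_\alpha$, one must produce another $F^\# \in \G''$ whose row-$i$ fiber lies in $B_\alpha \cap H$ and whose outer-data constraints are implied by those of $F$, so that $F^\#$ is non-empty whenever $F$ is. In general, $F^\#$ realizes a different reduced decomposition of $w$ than $F$, related by braid moves at the face-diagram level rather than by ladder moves alone, and the construction requires a careful analysis of reduced pipe dreams for $w$ in the spirit of \cite{KnM,Mi}, invoking the braid relations to swap the distinguished left descent appearing in the decomposition.
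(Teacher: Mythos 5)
Your reduction is exactly the paper's: identify the row-$i$ fibers with faces of $\Pi(\mu',\nu')$, note that $\G'$ is a union of $L$-classes because ladder moves preserve $w(F)$ and become $L$-moves on paradiagrams, split off the classes $B_\alpha$ with empty initial parabox, and invoke Proposition~\ref{P:paramitosis1} via Remark~\ref{r:paramitosis}. Up to that point the argument is sound and matches the paper step for step.

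But the proof stops precisely where the real work begins. The statement you flag as ``the main obstacle'' --- that for each $B_\alpha$ there is an $L$-class $A\subset A_1\cup\dots\cup A_k$ with $M(A)=B_\alpha$, equivalently that $H\cap B_\alpha$ is realized by reduced Kogan faces in $\G''$ --- is asserted, not proven; saying it ``requires a careful analysis of reduced pipe dreams \dots invoking the braid relations'' is a description of a task, not an argument. This claim is the actual content of the lemma (it is where the hypothesis $l(s_iw)=l(w)-1$ enters, and without that hypothesis it is false), and the paper devotes a separate statement to it (Lemma~\ref{l.global}). The paper's argument is concrete: read the face diagram of $F$ as a reduced word $w=w_1w_2$, with $w_1$ coming from rows $\ge i$ and $w_2$ from rows $<i$; use the leading block of $q$ type-$(1,1)$ diagonals in the $i$-th fiber diagram to show that $l(s_iw)<l(w)$ forces $l(s_{i+q}w_2)<l(w_2)$ (via the conjugation identity $s_i(s_{i+1}\cdots s_{i+q}s_i\cdots s_{i+q-1})=(s_{i+1}\cdots s_{i+q}s_i\cdots s_{i+q-1})s_{i+q}$); then apply the exchange property to rewrite $w_2$ as a reduced word beginning with $s_{i+q}$, and realize this rewriting on the face diagram by deleting one edge above row $i$ and inserting the edge $\l_{i,q+1}=\l_{i-1,q+1}$, producing the required face $F'$ with nonempty initial parabox whose paramitosis contains the fiber of $F$. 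Some such argument (exchange property rather than braid moves, plus the bookkeeping that the new diagram is still reduced with permutation $w$ and that the inclusion of fibers holds) is indispensable; without it the proposal does not establish the lemma.
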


\begin{proof}[Proof of Key Lemma \ref{T:comb} using Lemma \ref{L:fiber1}]
Note that $q_i(\Pi')$ is a single point $z\in\R^{d-(n-i)}$
(i.e. all coordinates in all rows except for row $i$ are fixed).
For a point $x\in \Pi'$, denote by $y=(y_1,\ldots,y_{n-i})$ the coordinates of $x$ in
row $i$.
Let $\sigma_j(z)=\sigma_j(x)$ (for $j\ne i$) be the sum of coordinates in row $j$.
By definitions of $T_i^-$ and $T$, the following identity holds for all
$x\in\Pi'$ after substituting $t=e^{\a_i}$:
$$T_i^-e^{p(x)}=\prod_{j\ne i}e^{\sigma_j(z)\a_j}T(t^{\sigma(y)}).$$
In Lemma \ref{L:fiber1}, replace $t$ with $e^{\a_i}$,
and multiply both parts by the product
$$
\prod_{j\ne i} e^{\sigma_j(z)\a_j}.
$$
To obtain Key Lemma~\ref{T:comb}, it now suffices to take the sum over
all fibers $\Pi'$ of $\Pi''$ at the $i$-th row.
\end{proof}

\begin{proof}[Proof of Lemma \ref{L:fiber1}]
Lemma \ref{L:fiber1} will follow from Proposition \ref{P:paramitosis1} once
we check that $\G'$ satisfies the hypothesis of the proposition.
We know that $\G'$ is closed under $L$-moves since $\G''$ is closed under ladder-moves.
We can split $\G'$ into the union of $L$-classes
$A_1\cup\ldots\cup A_k\cup B_1\cup\ldots\cup B_r$ where $A_i$ and $B_i$, respectively, have nonempty and empty
initial parabox. By Remark \ref{r:paramitosis} it suffices to show for each $i\in\{1,\ldots,r\}$
that $B_i=M(A'_i)$ for some $A'_i\subset(A_1\cup\ldots\cup A_r)$.
This follows from the lemma:

\begin{lemma}\label{l.global}
Let $F$ be a reduced Kogan face such that $w(F)=w$ and the $i$-th paradiagram of $F$
has empty initial parabox.
If $l(s_iw)=l(w)-1$, then there exists another reduced Kogan face $F'$ such that
$w(F')=w$, the $i$-th paradiagram of $F'$ has nonempty initial parabox,
and $F\cap\Pi(\mu',\nu')$ is contained in $M(F'\cap\Pi(\mu',\nu'))$.
\end{lemma}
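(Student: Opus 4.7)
The plan is to construct $F'$ from $F$ by a targeted sequence of edge-swaps, guided by the descent condition $l(s_iw)=l(w)-1$.

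First, since $l(s_iw)=l(w)-1$, the simple reflection $s_i$ lies in the support of $w$, and so the reduced word read off $F$'s edges bottom-up and left-to-right necessarily contains the letter $s_i$. The Kogan edges that can contribute $s_i$ are exactly the upper edges at positions $(r,c)$ with $r+c=i+1$: the unique ``fiber'' position $(i,1)$, and the ``outside'' positions $(r, i+1-r)$ for $1\le r\le i-1$. The empty initial parabox hypothesis forces the first non-$(1,1)$ column of $F$'s fiber diagram to be of type $(0,0)$ or $(0,1)$; in particular, if column $1$ itself is not $(1,1)$, there is at least one $s_i$-contributing edge of $F$ lying outside the fiber.

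I would then construct $F'$ in stages. In the cleanest case, when column $1$ of $F$'s fiber is of type $(0,0)$, the face $F'$ is obtained by adding the upper edge at $(i,1)$ and deleting one outside $s_i$-edge: column $1$ becomes of type $(1,0)$, producing the desired size-$1$ initial parabox, and the verification that $w(F')=w$ with $F'$ reduced reduces to a commutation analysis on the modified reduced word (intermediate reflections $s_m$ with $|m-i|\ge 2$ commute with $s_i$). In the remaining cases---column $1$ of type $(0,1)$, or $(1,1)$ with a masked initial parabox---I would first apply a sequence of ladder-moves on $F$ to bring it into a form in which the simpler edge-swap applies; such ladder-moves are available precisely because the descent condition, via the Exchange Condition, guarantees an accessible ``descent position'' in $F$'s reduced word through which the letter $s_i$ can be maneuvered.

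Finally, the containment $F\cap\Pi \subset M(F'\cap\Pi)$ is inherited from the local structure of the swap: $F$ and $F'$ agree at all fiber-$i$ columns apart from the swapped one, and at that column $F$'s paradiagram entry realizes one of the intron patterns produced by paramitosis of $F'$'s initial parabox (when the initial parabox has size $1$, the only intron pattern is $*$, matching $F$'s entry there exactly). The main obstacle will be producing a uniform commutation/braid analysis verifying $w(F')=w$ across all configurations of $F$; this is particularly delicate in the $(0,1)$ and nested $(1,1)$ cases, where several ladder-moves may be needed before the critical edge-swap, and one must track the reducedness of the resulting face carefully at each step.
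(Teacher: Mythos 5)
Your overall strategy --- add an upper edge in the $i$-th row of the fiber diagram to create a nonempty initial parabox, delete one edge above row $i$ to compensate, and use the descent $l(s_iw)<l(w)$ to guarantee this is possible --- is the same as the paper's, but the two steps that carry the actual content do not work as you describe them. First, the deletion step: you propose to delete ``one outside $s_i$-edge'' and to verify $w(F')=w$ by commuting $s_i$ past intermediate letters $s_m$ with $|m-i|\ge 2$. But the letters lying between the fiber position $(i,1)$ and an outside $s_i$-edge include, in general, occurrences of $s_{i\pm 1}$ (row $i-1$ alone contributes letters $s_{i-1},s_i,s_{i+1},\dots$), so the commutation argument cannot go through; moreover the letter that must be deleted is the one produced by the exchange property applied to the suffix $w_2$ of the reduced word read off rows $i-1,\dots,1$, and that letter is in general \emph{not} an $s_i$-letter (if $w_2=s_{i+1}s_is_{i+1}$, prepending $s_i$ forces deletion of the final $s_{i+1}$). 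The paper's proof first splits the word as $w=w_1w_2$ at row $i$, proves $l(s_iw_2)<l(w_2)$ by an inversion-counting argument (using that in the case $q=0$ the subword $w_1$ contains only $s_j$ with $j>i$, hence fixes $i$), and only then invokes the exchange property to decide which edge of $w_2$ to remove; this descent-transfer step is exactly what your sketch is missing.

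Second, the cases with a leading block of $q>0$ type-$(1,1)$ diagonals cannot be reduced to the clean case by ladder moves: ladder moves are precisely the $L$-moves, and in the $i$-th fiber these preserve the decomposition of the paradiagram into initial, intron and final paraboxes as well as the set of ignored positions, so they can neither remove the leading $(1,1)$ block nor turn a $(0,1)$ first column into a $(0,0)$ one; and you give no mechanism by which ladder moves at other rows would achieve such a normalization. The paper handles $q>0$ directly: the identity $s_i(s_{i+1}\cdots s_{i+q}s_i\cdots s_{i+q-1})=(s_{i+1}\cdots s_{i+q}s_i\cdots s_{i+q-1})s_{i+q}$ shows that the relevant left descent of $w_2$ is at $s_{i+q}$, not $s_i$, and accordingly the edge one adds is $\l_{i,q+1}=\l_{i-1,q+1}$, contributing the letter $s_{i+q}$ --- not the $s_i$-edge at $(i,1)$ that your construction always inserts. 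So both the choice of inserted edge and the choice of deleted edge require the word-combinatorial analysis that your proposal either defers or replaces by an argument that fails.
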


\begin{proof} Recall that the face diagram of $F$ defines a reduced decomposition
$w=s_{i_1}\cdots s_{i_l}$,
which by definition splits into two reduced words $w_1$ and $w_2$ as follows.
The word
$w_1=s_{i_1}\cdots s_{i_p}$
is composed from elementary transpositions by going from the bottom row to the $i$-th row
inclusively, and
$w_2=s_{i_{p+1}}\cdots s_{i_l}$ is composed by going from the $(i-1)$-st row to the top row.
In particular, the word $w_1$ contains only $s_j$ with $j\ge i$.
Since the initial parabox of $F$ is empty, the fiber diagram of $F$ starts with a
sequence of length $q$ of type $(1,1)$ diagonals followed by a type $(0,0)$ or $(0,1)$
diagonal.

If $q=0$, then the word $w_1$ contains only $s_j$ with $j>i$, in particular,
$w_1(i)=i$ and $(i+1,w_1^{-1}(i+1))$ is an inversion for $w_1$ unless $w_1(i+1)=i+1$.
Hence, the hypothesis  $l(s_iw)<l(w)$ (which is equivalent to $w^{-1}(i)>w^{-1}(i+1)$)
implies that $l(s_iw_2)<l(w_2)$. Indeed,
$$w^{-1}(i)=w_2^{-1}(i)>w_2^{-1}w_1^{-1}(i+1)\ge w_2^{-1}(i+1)$$
(the last inequality holds because $w=w_1w_2$ is reduced).
If $q>0$, the word $w_1$ can be further split as
$w_1's_{i+1}s_{i+2}\cdots s_{i+q}s_{i}s_{i+1}\cdots s_{i+q-1}w_1''$,
where $w_1'$ contains only $s_j$ with $j>i$ and $w_1''$ contains only $s_j$ with $j>i+q$.
By similar arguments we deduce that $l(s_{i+q}w_2)<l(w_2)$
(we use the identity
$s_i(s_{i+1}s_{i+2}\cdots s_{i+q}s_{i}s_{i+1}\cdots s_{i+q-1})=
(s_{i+1}s_{i+2}\cdots s_{i+q}s_{i}s_{i+1}\cdots s_{i+q-1})s_{i+q}$).

By applying to
$w_2=s_{i_{p+1}}\cdots s_{i_l}$
the exchange property we can replace it with a reduced word
$w_2'=s_{i+q}s_{i_{p+1}}\cdots\hat s_{i_r} \cdots s_{i_l}$.
We now replace $F$ with a reduced face $F'$ with the same permutation and non-empty
initial parabox as follows. In the face diagram of $F$, delete the edge corresponding
to $s_{i_r}$ and add the new edge $\l_{i,q+1}=\l_{i-1,q+1}$.
The resulting face diagram defines the face $F'$.
By construction, $M(F'\cap\Pi(\mu',\nu'))$ contains $F\cap\Pi(\mu',\nu')$.
\end{proof}
We now return to the proof of Lemma \ref{L:fiber1}.
Apply Lemma \ref{l.global} to a reduced Kogan face $F\in\G''$ whose paradiagram $B$ lies in $B_i$.
We get a face $F'\in\G''$ such that its paradiagram $A$ lies in $A_1\cup\ldots\cup A_r$ and $M(A)=B$.
Hence, the $L$-equivalence class $A'_i$ of $A$ also lies in $A_1\cup\ldots\cup A_r$ and
$M(A'_i)=B_i$ as desired.
\end{proof}

\end{document}